\documentclass[a4paper,twoside,tbtags]{article}
\usepackage{stackrel} \usepackage{bm}\usepackage{amssymb}
\usepackage{amsmath}\usepackage{amsfonts}\usepackage{amssymb} \usepackage{stackrel} 
\usepackage{bm}

\usepackage{latexsym}\usepackage{epsfig}\usepackage{graphicx}\usepackage{oldgerm}
\usepackage{amsthm}
\setlength{\evensidemargin}{0mm} \setlength{\oddsidemargin}{0mm}
\setlength{\textwidth}{130mm} 

\begin{document}


 \pagestyle{myheadings}
 \markboth{Infinite-dimensional stochastic differential equations and tail $ \sigma$-fields II}
{Yosuke Kawamoto, Hirofumi Osada, and Hideki Tanemura}

\medskip 

\begin{center}\begin{Large} 
{\textsf{ Infinite-dimensional stochastic differential equations \\and tail $ \sigma$-fields II: the IFC condition}
}\end{Large}\end{center}
\begin{center}
\texttt{Yosuke Kawamoto$^{1}$, Hirofumi Osada$^{2}$, Hideki Tanemura$^{3}$
}
\end{center}

\begin{center}
{ (To appear in Journal of the Mathematical Society of Japan)}
\end{center}

\medskip

\begin{footnotesize}
\begin{description}
\item{1} \quad \ 
Fukuoka Dental College, Fukuoka 814-0193, Japan. \\
Email: {kawamoto@college.fdcnet.ac.jp} \\
\item{2} \quad \ 
Faculty of Mathematics, Kyushu University, \\ Fukuoka 819-0395, Japan.
E-mail: osada@math.kyushu-u.ac.jp \\
\item{3} \quad \ 
Department of Mathematics, Keio university,\\
Kohoku-ku, Yokohama 223-8522, Japan. \\ E-mail:  tanemura@math.keio.ac.jp 
\end{description}
\end{footnotesize}

\noindent 
\textbf{MSC 2020:  }{Primary 60K35; Secondary 60H10, 82C22, 60B20}

\noindent 
\textbf{Keywords: } {interacting Brownian motions, infinite-dimensional stochastic differential equations, random matrices}

\newcommand\DN{\newcommand} \newcommand\DR{\renewcommand}
\numberwithin{equation}{section}
\newcounter{Const} \setcounter{Const}{0}
\DN\Ct{\refstepcounter{Const}c_{\theConst}}
\DN\cref[1]{c_{\ref{#1}}}	
\newtheorem{definition}{Definition}[section]
\newtheorem{cdn}[definition]{Condition}
 \newtheorem{example}{Example}[section]
 \newtheorem{problem}[definition]{Problem}
 \newtheorem{theorem}{Theorem}[section]
 \newtheorem{lemma}[theorem]{Lemma}
 \newtheorem{corollary}[theorem]{Corollary}
 \newtheorem{proposition}[theorem]{Proposition}
\newtheorem{remark}{Remark}[section]
\newcommand{\mib}[1]{\mbox{\boldmath $#1$}}
\newcommand{\SSC}[1]{\section{#1}\setcounter{equation}{0}}
\newcommand{\newsection}[1]{\setcounter{equation}{0}\setcounter{theorem}{0} \section{#1}} 
\renewcommand{\theequation}{\thesection.\arabic{equation}}

\DN\0{ \sum_{i=1}^{\infty} \int_0^t }
\DN\1{M_{\qQ ,t} ^{\NN ,j}}
\DN\2{M_{\qQ } ^{\NN ,j}}
\DN\3{M_{\qQ ,t} ^{\NN ,\infty }}
\DN\4{M_{\qQ } ^{\NN ,\infty }}

\DN\5{\upsilon ( |X_0^i-X_0^j| )^2} 

\DN\6{ \frac{ \vartheta (X_u^i-X_u^j) }{ \upsilon (|X_u^i-X_u^j| ) }}
\DN\7{ \frac{ \vartheta (X_u^1-X_u^2) }{ \upsilon (|X_u^1-X_u^2| ) }}
\DN\8{\upsilon ( |X_u^1-X_u^2| )^2}

\DN\pPF{\pP _{\{ \Fs \}}}
\DN\Ps{P _{\mathbf{s}}}

\DN\RA{\Rightarrow}
\DN\LA{\Leftarrow}
\DN\Zum{\mathbf{Z}_u^m}
\DN\Ztm{\mathbf{Z}_t^m}
\DN\Zu{\mathbf{Z}_u}
\DN\ZumH{\hat{\mathbf{Z}}_u^m}
\DN\ZtmH{\hat{\mathbf{Z}}_t^m}
\DN\ZmH{\hat{\mathbf{Z}}^m}
\DN\BmHu{\hat{\mathbf{B}}_u^m}
\DN\BmHt{\hat{\mathbf{B}}_t^m}
\DN\BmH{\hat{\mathbf{B}}^m}

\DN\lref[1]{Lemma~\ref{#1}}\DN\tref[1]{Theorem~\ref{#1}}\DN\pref[1]{Proposition~\ref{#1}}
\DN\sref[1]{Section~\ref{#1}}
\DN\ssref[1]{Subsection~\ref{#1}}
\DN\dref[1]{Definition~\ref{#1}} 
\DN\rref[1]{Remark~\ref{#1}} 
\DN\corref[1]{Corollary~\ref{#1}}
\DN\eref[1]{Example~\ref{#1}}

\DN\bs{\bigskip}\DN\ms{\medskip}
\DN\N{\mathbb{N}}\DN\R{\mathbb{R}}\DN\Q{\mathbb{Q}}\DN\C{\mathbb{C}}
\DN\map[3]{#1\!:\!#2\!\to\!#3}
\DN\ot{\otimes} \DN\ts{\times }
\DN\PD[2]{\frac{\partial#1}{\partial#2}}
\DN\half{\frac{1}{2}}
\DN\elaw{\stackrel{\mathrm{law}}{=}}
\DN\eac{\stackrel{\mathrm{ac}}{\sim}}

\DN\Rd{\R ^d}
\DN\RtwoN{\R ^{2\N }}
\DN\Rtwo{\R ^2}
\DN\Rtwom{\R ^{2m}}

\DN\RdN{\R ^{d\N }} 

\DN\limi[1]{\lim_{#1\to\infty}} \DN\limz[1]{\lim_{#1\to0}}
\DN\limsupi[1]{\limsup_{#1\to\infty}}\DN\liminfi[1]{\liminf_{#1\to\infty}} 	
\DN\limsupz[1]{\limsup_{#1\to 0}}\DN\liminfz[1]{\liminf_{#1\to 0}} 	

\DN\As[1]{$ ($\textbf{#1}$)$}
\DN\Ass[1]{$ \{ $\textbf{#1}$\}$}

\DN\Section{\section}\DN\Ssection{\subsection}\DN\SSsection{\subsubsection}


 \DN\SO{\overline{\sS }}
 \DN\SOm{\SO ^m}
 \DN\RRr{\SOm _{\rr }} 
 \DN\RRpr{\SOm _{\pp ,\rr }}
\DN\RRprs{\RRpr (\sss )} 
\DN\RRprC{\Sm _{\pp ,\rr }}
\DN\RRprCs{\Sm _{\pp ,\rr }(\sss )}

\DN\LL{\mathcal{L}_{\qQ } }
\DN\yy{\mathfrak{y}}
 \DN\WSs{\WT (\Ss )}

\DN\WSsiNE{\WT _{\mathrm{NE}}(\Ssi )}

\DN\muN{\mu ^{\nN }}
\DN\Xidt{ \XX _t^{\idia }}
\DN\XNidt{\XX _t^{\nN , \idia }} 
\DN\idia{i \diamondsuit }
\DN\onedia{1 \diamondsuit }

\DN\sN{\sigma ^{\nN }}
\DN\aN{a^{\nN }}
\DN\SrSS{\Sr \ts \sSS }

\DN\nablax{\nabla _x}
\DN\supN{\sup_{\nN \in \N }}
\DN\muNone{\mu ^{\nN ,[1]}}
\DN\muNonebar{\bar{\mu }^{\nN ,[1]}}

\DN\bNrs{b_{r,s}^{\nN }} \DN\bN{b ^{\nN }}

\DN\tail{\mathrm{tail}}
\DN\bNrsp{{b}_{r,s,p }^{\nN }}

\DN\brsp{\bbb _{r,s,\p }}
\DN\brs{\bbb _{r,s }}
\DN\btail{\bbb ^{\tail }}
\DN\bNrstail{\bbb _{r,s}^{\nN ,\tail }}

\DN\JL[2]{\mathbf{J}_{[#1]}^{#2}}
\DN\SStwo{\sSS ^{[2]}}
\DN\Etwo{E _{\la }^{[2]}}
\DN\QQtwo{\QQ _{\la }^{[2]}}
\DN\SBhat{\{ \widetilde{\sigma }^m , \, \widetilde{ b }^m \}}
\DN\SB{(\widetilde{\sigma }^m , \, \widetilde{ b }^m )} 

\DN\simpr{\sim _{\pp ,\rr }}
\DN\HmnP{\Pi _2 (\HanC )} 
\DN\HmnPc{\Pi _2 (\Han )}
\DN\HanC{\Han ^{\circ }}

\DN\pq{\pp ,\rr }\DN\pqr{\pp ,\qq ,\rr } 
\DN\NNNthree{\NNN _3 }
\DN\NNNtwo{\NNN _2 }

\DN\HHns{\mathbb{H}_{\nn }(\sss )}

\DN\YY{\mathfrak{Y} }
\DN\sss{\mathfrak{s}}
\DN\XX{\mathfrak{X}}
\DN\xx{\mathfrak{x}}
\DN\s{\mathbf{s}} 

\DN\zN{\{ 0 \} \cup \N }

\DN\XBXm{(\mathbf{X}^m , \mathbf{B}^m , \XX ^{m*}) }

\DN\ginSDE{\eqref{:35c}--\eqref{:35e}}

\DN\Ft{\{ \mathcal{F}_t \}}
\DN\FtB{$\{ \mathcal{F}_t \}$-Brownian motion }

\DN\ASTpath{$ \mathbf{T}_{\mathrm{path}}1$}
\DN\ASTpatH{$ \mathbf{T}_{\mathrm{path}}2$}

\DN\Ehat{\mathcal{C}}
\DN\Ehatm{\Ehat^{m}}
\DN\Ehatmt{\Ehatm _t}

\DN\WSm{ \WT (\Sm )} 
\DN\lpathX{\lpath (\XX ) }

\DN\sigmam{\sigma ^m}
\DN\smn{\Sigma _{\nn }} 
\DN\smnT{\smn \wedge T }

\DN\PmNI[1]{\Pm (\XX _t\not\in #1 \text{ for some } 0\le t < \infty )=0}

\DN\vv{\mathfrak{v}}
\DN\UV{(\mathbf{u},\vv )}
\DN\INmu{\As{AC}, \As{SIN}, \As{NBJ}, and \As{IFC}} 

\DN\PPPm{\widetilde{\PP } ^m }

\DN\wti{\w _t^i}
\DN\w{\mathit{w}}
\DN\wt{\mathit{w}_t}
\DN\ww{\mathfrak{w}}
\DN\wwt{\ww _t}
\DN\www{\mathbf{w}}\DN\wwwt{\www _t}\DN\wtt{\www (t)}
\DN\wwwm{\www ^{[m]}}

\DN\la{\lambda }

\DN\mul{\mu \circ {\lab }^{-1}}
\DN\Pml{\mathbf{P}_{\mul } }

\DN\zti{ 0 \le t < \infty }\DN\zzti{ 0 < t < \infty }
\DN\bj{big jump\ }

\DN\Vnu{V_{\nu }}
\DN\Eq{Eq. \!\!}

\DN\y{\mathbf{y}}

\DN\pirc{\pi _r^c}
\DN\piRc{\pi _{\rR }^c}
\DN\pisc{\pi _s^c}
\DN\pitc{\pi _t^c}
\DN\pir{\pi _r}
\DN\piR{\pi _{\rR }}
\DN\pis{\pi _s}

\DN\nnNNN{\nn \in \NNN }

\DN\pp{{p}}\DN\qq{{q}}\DN\rr{{r}}
\DN\p{p} \DN\phat{\hat{\p }}

\DN\ppp{p}
\DN\qqq{q}

	\DN\NNN{\mathrm{N}}
	\DN\NN{N}
	\DN\nN{N}
	\DN\PP{\mathit{P}}
	\DN\pP{P}
	\DN\qQ{Q}

\DN\rR{R} 
\DN\sS{S} \DN\sSS{\mathfrak{S}}
\DN\SrR{\sS _{\rR }}

\DN\pPm{\pP _{\mu }}
\DN\pPs{\Pts }

\DN\nn{\mathrm{n}} 
\DN\rrr{r} 

\DN\lL{l}
\DN\kK{k}
\DN\kl{\kK \lL }
\DN\lD{_{ \lL =1}^d}
\DN\kD{_{ \kK =1}^d}
\DN\klD{_{\kK , \lL =1}^d }
\DN\sumklD{\sum \klD } \DN\sumkD{\sum \kD}
\DN\akl{a_{\kK \lL }}
\DN\sigmakl{\sigma _{\kl }}
\DN\il{i,\lL }

\DN\ijn{_{i,j=1}^{n}}

\DN\vq{\varphi _{\qq }}
\DN\vr{\varphi _{r }}
\DN\az{a_{0}}
\DN\ak{a_{\qq }}
\DN\akk{a_{\qq +1}}

\DN\K{\mathcal{K} }
\DN\Ka{\K [\mathbf{a}]}
	\DN\Kaa{\K [\mathbf{a}^+]}
\DN\Kak{\K [\mathit{a}_{\qq }]}
\DN\Kakk{\K [\mathit{a}_{\qq }^+]}

\DN\KQ{\K _{\qQ }}
\DN\KaQ{\K _{\qQ }[ \mathbf{a}]}
\DN\KaQk{\K _{\qQ }[\mathit{a}_{\qq }]}
\DN\KaQkk{\K _{\qQ }[\mathit{a}_{\qq }^+]}

\DN\hH{\mathfrak{H}}
\DN\Ha{\hH [\mathbf{a}]}
\DN\HaC{\hH [\mathbf{a}]^{\circ }}
\DN\Hb{\Ha _{\pqr }}
\DN\HbC{\Ha _{\pqr }^{\circ }}

\DN\Hak{\hH [\mathbf{a}]_{\qq }}
\DN\Han{\Ha _{\nn }}
\DN\Hann{\Ha _{\nn +1}}
\DN\II{\mathfrak{I}}
\DN\IIg{\Hg ^{[1]}}

\DN\YmX{(\mathbf{Y}^{m} , \mathbf{X}^{m*})}
\DN\XmX{(\mathbf{X}^{m} , \mathbf{X}^{m*})}
\DN\Xms{ \mathbf{X}^{m*} }
\DN\xm{_{\XX }^m}

\DN\XXms{\XX ^{m*}}

\DN\XM{\mathbf{X}^{[m]}}

\DN\rgn{\rho _{\mathrm{Gin}}}
\DN\kg{{K}_{\mathrm{Gin}}}
\DN\mug{\mu _{\mathrm{Gin}}}
\DN\mugx{\mu _{\mathrm{Gin},x}}
\DN\muSinb{\muone _{\mathrm{sin}, \beta }}
\DN\muAi{\mu _{\mathrm{Ai}}}

\DN\mub{\mu _{\mathrm{Be},\alpha }}
\DN\muone{\mu ^{[1]}}
\DN\mutwo{\mu ^{[2]}}
\DN\mum{\mu ^{[m]}}

\DN\rhohat{\hat{\rho}}

\DN\SST{\widetilde{\sSS }}
\DN\SSTI{\widetilde{\sSS }^{\infty }}

\DN\Sm{\sS ^m } \DN\SmSS{\Sm \ts \sSS }
\DN\Srm{\Sr ^m} \DN\SrmSS{\Srm \ts \sSS }

\DN\SSsdeg{\sSS _{\mathrm{sde}}}
\DN\SSsde{\sSS _{\mathrm{sde}}}
\DN\SSSsde{\mathbf{S}_{\mathrm{sde}}}

\DN\SSsdemt{\sSS _{\mathrm{sde}}^m(t,\XX )}
\DN\SSsdemtw{\sSS _{\mathrm{sde}}^m(t,\ww )}
\DN\SSSsdemt{\mathbf{S}_{\mathrm{sde}}^m(t,\XX )}
\DN\SSSsdemtg{\mathbf{S}_{\mathrm{sde}}^m(t,\XX )}
\DN\SSSsdemtw{\mathbf{S}_{\mathrm{sde}}^m(t,\ww )}

\DN\SSrm{\sSS _r^m }
\DN\SSRm{\SSR ^m }
\DN\SSR{\sSS _{\rR }}

\DN\SSsm{\sSS _s^m }
\DN\SSrNk{\sSS _{r,N-k}}
\DN\SSk{\sSS ^{[k]}}
\DN\SSksingle{\Ssi ^{k}}

\DN\Si{\sSS _{\mathrm{i}}}
\DN\Ssi{\sSS _{\mathrm{s,i}}}
\DN\Ss{\sSS _{\mathrm{s}}}
\DN\Ssip{\sSS _{\mathrm{s.i}}^{+f}}
\DN\SSone{\sSS ^{\mathbf{1}}}
\DN\SoneSS{\sS \ts \sSS }

\DN\Sk{\sS ^{k}}
\DN\Sp{\sS _{\qq }}
\DN\Srk{\Sr ^{k}}
\DN\Sr{\sS _{r}}
\DN\SR{\sS _{\rR }}
\DN\SRm{\sS _{\rR }^m }
\DN\SN{\sS ^{\mathbb{N}}} 
\DN\Jrs{J_{ \rR ,\sss }}

\DN\dlog{\mathfrak{d}}
\DN\dmu{\dlog ^{\mu }}
\DN\dpsi{\dlog ^{\mupsi }}
\DN\dmuone{\dlog ^{\mu ^{1}}}
\DN\dmuhat{\hat{\dlog } ^{\mu }}

\DN\dG{\dlog ^{\mug }}
\DN\dSine{\dlog ^{\mu _{\mathrm{sin},\beta }} }

\DN\sigmaXms{\sigma \xm }
\DN\bbbXms{\mathit{b} \xm }

\DN\bbb{\mathit{b}}

\DN\dom{\mathcal{D}} 
\DN\Damu{\mathcal{D}^{\amu }} 
\DN\di{\dom _{\circ }}

\DN\Lmu{L^2(\sSS ,\mu )}
\DN\Lmg{L^{2}(\sSS ,\mug )}


\DN\Eamu{\mathcal{E}^{\amu }}
\DN\Emu{\mathcal{E}^{\mu } }
\DN\E{\mathcal{E} }

\DN\amu{a ,\mu }
\DN\amuone{a ,\muone }
\DN\DDDa{\mathbb{D}^{a}}
\DN\D{\mathbf{D}}

\DN\ulab{\mathfrak{u} }
\DN\lab{\mathfrak{l} } 

\DN\labm{\lab _m } 
\DN\labi{\lab ^i}

\DN\labN{\lab ^{\nN }}

\DN\labmN{\labm ^{\nN }}

\DN\muNl{\muN \circ (\labN )^{-1}}

\DN\ulabm{\mathfrak{u} _{[m]}}
\DN\ulabone{\mathfrak{u} _{[1]}}

\DN\upath{\ulab _{\mathrm{path}}}
\DN\lpath{\lab _{\mathrm{path}}} 

\DN\lpathmstar{\lpath ^{m* }} 
\DN\lpathmstarM{\lpath ^{(m*) }} 
\DN\lpathi{\lpath ^i}

\DN\OF{(\Omega , \mathcal{F} )}
\DN\OFF{(\Omega ,\mathcal{F},\{ \mathcal{F}_t \} )}
\DN\OFP{(\Omega ,\mathcal{F}, P )}
\DN\OFPF{(\Omega ,\mathcal{F}, P ,\{ \mathcal{F}_t \} )}

\DN\OFPFts{(\Omega ,\mathcal{F}, \Pts ,\{ \mathcal{F}_t \} )}
\DN\OFPFmg{(\Omega ,\mathcal{F}, \PP _{\mug } , \{ \mathcal{F}_t \} )}
\DN\OFPFs{(\Omega ,\mathcal{F}, \PPs , \{ \mathcal{F}_t \} )}
\DN\OFPFss{(\Omega ,\mathcal{F}, \Ps , \{ \mathcal{F}_t \} )}

\DN\OFPFlambda{(\Omega ,\mathcal{F}, \PP _{\la } , \{ \mathcal{F}_t \} )}

\DN\QQ{\mathit{Q}} 

\DN\QQla{\QQ _{\la }} \DN\QQxs{\QQ _{\mathbf{x},\sss }}
\DN\QQs{\QQ _{\sss }} \DN\QQmg{\QQ _{\mug }} 

\DN\QQlaM{\QQla ^{[m]}} \DN\QQxsM{\QQxs ^{[m]}}

\DN\QQlaMR{\QQ _{\la ,\rR } ^{[m]}}
\DN\QQlaMRR{\QQ _{\la ,\rR '} ^{[m]}}

\DN\OFQFm{(\Omega ,\mathcal{F},\{ \QQ _{\ulab (\mathbf{x}) + \sss } \}, \{ \mathcal{F}_t \} )}
\DN\OFQF{(\Omega ,\mathcal{F},\{ \QQs \}, \{ \mathcal{F}_t \} )}
\DN\OFQFs{(\Omega ,\mathcal{F}, \QQs , \{ \mathcal{F}_t \} )}
\DN\OFQFlambda{(\Omega ,\mathcal{F}, \QQla , \{ \mathcal{F}_t \} )}

%
%

\DN\musin{\mu_{\mathrm{sin}, 2}}
\DN\musinone{\mu_{\mathrm{sin}, 1}}
\DN\musinfour{\mu_{\mathrm{sin}, 4}}
\DN\musinbeta{\mu_{\mathrm{sin}, \beta}}

\DN\fg{\widetilde{b}^{m}}
\DN\fgfg{\widetilde{b}^{m}}

\DN\fgsigma{\widetilde{\sigma}^{m}}
\DN\fgfgsigma{\widetilde{\sigma}^{m}}

\DN\G{G}

\DN\Ge{G ^{\epsilon }}

\DN\Hg{\hH } \DN\lH{\lab (\hH ) }
\DN\FF{\widetilde{F}_{\nn ,T}}
\DN\F{\mathbb{F}}
\DN\Fs{\F _{\s }}
\DN\Fsm{\Fs ^{m}}

\DN\WT{W } 

\DN\WS{\WT (\sSS )} 
\DN\WSN{\WT (\SN )} 
\DN\Wsde{\WT (\SSsde )}
\DN\WSsi{\WT (\Ssi )}

\DN\WSz{\WT _{\mathbf{0}}(\SN )} 
\DN\WSNs{\WT ^{\s }(\SN )}

\DN\WRNz{\WT _{\mathbf{0}} (\RdN )}
\DN\WRN{\WT (\RdN )}

	\DN\WRdzm{\WT _{\mathbf{0}} (\Rdm )}

\DN\WWdm{\WRdzm \ts \WT (\RdN )}

\DN\Rtm{\R ^{2m}}
\DN\Rtmstar{\R ^{2m*}}
\DN\RtN{\R ^{2\N }}
\DN\Rdm{\R ^{dm}}

\DN\Wsol{\mathbf{W} ^{\mathrm{sol}}}
\DN\Wsols{\mathbf{W} ^{\mathrm{sol}}_{\s } }

\DN\Bt{\mathcal{B}_t }
\DN\Btm{\mathcal{B}_t^m }
\DN\Bw{\widetilde{\mathcal{B} }}

\DN\Tail{\mathcal{T}} 
\DN\TS{\Tail \, (\sSS )}
\DN\TSsi{\Tail (\Ssi )}

\DN\TSone{\Tail ^{\{1\}}(\sSS ; }

\DN\mupsi{\mu _{\Psi_{0}}}

\DN\PPs{\PP _{\sss }}
\DN\Ptm{\widetilde{P}_m} 
\DN\PsBg{\PsB ^{\mathrm{Gin}}}

\DN\PBr{P _{\mathrm{Br}}^{\infty}} \DN\PBrm{P _{\mathrm{Br}}^{m}}

\DN\Pm{{P}_{\mu }}
\DN\Pmg{{P}_{\mug }}

\DN\Ptztm{\Ptzt ^m}

\DN\hp{h _{\pp }}
\DN\XB{(\mathbf{X},\mathbf{B})}
\DN\XBhat{(\mathbf{X},\hat{\mathbf{B}})}
\DN\sXB{(\s ,\mathbf{B},\mathbf{X})}
\DN\XzBXm{(\mathbf{B}^m,\mathbf{X}^{m*})}
\DN\BX{(\mathbf{B},\mathbf{X})}
\DN\cB{(\cdot ,\mathbf{B})}
\DN\vw{(\mathbf{v},\mathbf{w})}

\DN\M{\mathrm{M}}
\DN\mrX{\M _{r,T} (\XX )}
\DN\mrXX{\M _{r,T} (\mathbf{X})}
\DN\mrXXN{\M _{r,T} (\mathbf{X}^{\nN})}
\DN\mrT{\M _{r,T} }
\DN\MrT{\M _{r,T}}
\DN\pqk{\pp ,\qq ,\rr }\DN\pqkl{\pp ,\qq ,\rr , l }

\DN\ellell{\ell }
\DN\g{{g}}\DN\h{{h}}

\DN\TRN{T, R \in \N }

\DN\chin{\chi _{\nn }}
\DN\chinn{\chi _{\nn +1}}

\DN\chiwt{\widetilde{\chi }_{\qQ }}
\DN\chiwtN{\chiwt ^{\NN }}

\DN\chic{\check{\chi}_{\qQ }}

	\DN\chiwtp{\PD{\chiwtN }{x_{\kK }}}
	\DN\chiwtq{\PD{\chiwtN }{x_{\lL }}}
	\DN\chiwtpq{\frac{\partial ^2 \chiwtN }{\partial x_{\kK } \partial x_{\lL }}} 

\DN\chiwtIp{\PD{\chiwtI }{x_{\kK }} }
\DN\chiwtIq{\PD{\chiwtI }{x_{\lL }} } 
\DN\chiwtIpq{\frac{\partial ^2 \chiwtI }{\partial x_{\kK } \partial x_{\lL }}}

%

\DN\chiwtI{\widetilde{\chi }}
\DN\chiwtII{\widetilde{\chi }_{\infty }}

\DN\TRe{\sS _R^{2,\varepsilon }}

\DN\chik{\chi _{\qq }}
\DN\chiktwo{\chi _{\qq } ^2}

\DN\dk{\mathbf{d}_{\qq }} 
\DN\dkQ{\mathbf{d}_{\qq , \qQ }}
\DN\chiqQ{\chi _{\qq ,\qQ }}
\DN\chirQ{\chi _{\rr ,\qQ }}

\DN\kappaq{\kappa _{\qq }}
\DN\kappai{\kappa _{\infty }}

\DN\tauz{\tau ^{0}}
\DN\tauRe{\tau _{R }^{\epsilon }}
\DN\tauRz{\tau _{R }^{0}}

\DN\tauR{\tau _{R }} 
\DN\tauRet{t\wedge \tauRe }
\DN\tauRetdot{\cdot \wedge \tauRe }
\DN\tauReT{T\wedge \tauRe }

\begin{abstract} 
In a previous report, the second and third authors gave general theorems for unique strong solutions of infinite-dimensional stochastic differential equations (ISDEs) describing the dynamics of infinitely many interacting Brownian particles. One of the critical assumptions is the \lq\lq IFC'' condition. The IFC condition requires that, for a given weak solution, the scheme consisting of the finite-dimensional stochastic differential equations (SDEs) related to the ISDEs exists. Furthermore, the IFC condition implies that each finite-dimensional SDE has unique strong solutions. Unlike other assumptions, the IFC condition is challenging to verify, and so the previous report only verified it for solutions given by quasi-regular Dirichlet forms. In the present paper, we provide a sufficient condition for the IFC requirement in more general situations. In particular, we prove the IFC condition without assuming the quasi-regularity or symmetry of the associated Dirichlet forms. As an application of the theoretical formulation, the results derived in this paper are used to prove the uniqueness of Dirichlet forms and the dynamical universality of random matrices.
\end{abstract}


\section{Introduction}\label{s:1} 
We consider the dynamics of infinitely many interacting Brownian particles in the Euclidean space $ \Rd $. 
We assume that each particle $ X^i $ moves under the effect of itself and the other infinitely many particles. 
The dynamics $ \mathbf{X} = (X_t^i)_{i\in \N }$ can be described by the following infinite-dimensional stochastic differential equation (ISDE):
\begin{align}\label{:11y}&
X_t^i -X_0^i= \int_0^t \sigma (X_u^i, \XX _u^{\idia }) dB_u^i + 
 \int_0^t \bbb (X_u^i, \XX _u^{\idia }) du 
\quad (i \in \N )
.\end{align}
Here, $ \mathbf{B}=(B^i)_{i=1}^{\infty}$, where $ \{B^i\}_{i\in\N }$ denotes 
independent copies of $ d $-dimensional Brownian motion, and 
$ \XX ^{\idia } = \{ \XX _t^{\idia } \} $ represents the unlabeled dynamics given by 
\begin{align}\label{:11z}&
\XX _t^{\idia } = \sum_{j\not=i}^{\infty} \delta _{X_t^j}
.\end{align}
The coefficients $ \sigma $ and $ \bbb $ 
are defined on $ \Rd \ts \sSS $, where $ \sSS $ denotes the configuration space over $ \Rd $ (see \eqref{:21a}). 
Note that the functions $ \sigma $ and $ \bbb $ are independent of $ i \in \N $ and 
all particles $ \{ X_t^j \}_{j\in \N, j\not=i} $ are indistinguishable in \eqref{:11z}. 
These conditions enable \eqref{:11y} to describe the motion of identical interacting particles.

A pair of $ (\Rd )^{\N }$-valued, continuous processes $ (\mathbf{X},\mathbf{B})$ 
defined on a filtered probability space 
$ (\Omega ,\mathcal{F},P,\{ \mathcal{F}_t \} )$ satisfying \eqref{:11y} is called a weak solution, where 
$ \mathbf{B}$ is an $ \{ \mathcal{F}_t \}$-Brownian motion. 
Loosely speaking, if $ \mathbf{X}$ is a functional of $ \mathbf{B}$ and an initial starting point $ \s $, 
then the weak solution $ (\mathbf{X},\mathbf{B})$ is called a strong solution. 
We say the pathwise uniqueness of solutions holds if any pair of weak solutions $ \XB $ and $ (\mathbf{X}',\mathbf{B})$ 
with the same Brownian motion $ \mathbf{B}$ defined on the common filtered probability space 
$ (\Omega ,\mathcal{F},P,\{ \mathcal{F}_t \} )$ with $ \mathbf{X}_0=\mathbf{X}_0'$ almost surely (a.s.) satisfies 
$ P (\mathbf{X}=\mathbf{X}') = 1 $. 
We say that uniqueness in law holds if the distributions of $ \mathbf{X}$ and $ \mathbf{X}'$ coincide for any pair of weak solutions $ \XB $ and $ (\mathbf{X}',\mathbf{B}')$ with the same initial distribution. 
The pathwise uniqueness of weak solutions implies the uniqueness in law because of the Yamada-Watanabe theory \cite{IW,o-t.tail}.

Typical examples of ISDEs are interacting Brownian motions. 
Each particle moves under the force of its self-potential $ \Phi (x)$ and the interaction potential $ \Psi (x,y) $. Then 
\begin{align}\label{:11a}&
X_t^i -X_0^i= B_t^i 
- \frac{\beta }{2} \int_0^t \nabla_x \Phi (X_u^i) du - 
 \frac{\beta }{2} 
\int_0^t 
\sum_{j\not=i}^{\infty} \nabla_x \Psi (X_u^i,X_u^j) du 
\quad (i \in \N )
.\end{align}
Here, 
$ \nabla_x = (\PD {}{x_i} )_{i=1}^d$ and $ \beta $ is a positive constant called the inverse temperature. 

Lang derived general solutions to ISDE \eqref{:11a} by constructing a reversible solution starting from almost all points under the condition $ \Phi = 0$ and $ \Psi \in C_0^3(\Rd )$ \cite{lang.1,lang.2}. 
Here, we say that a solution $ \mathbf{X}=(X^i)_{i\in\N }$ is reversible with respect to a random point field $ \mu $ ($ \mu $-reversible) if the associated unlabeled process $ \XX =\sum_{i\in\N } \delta_ {X^i}$ is $ \mu $-symmetric and $ \mu $ is an invariant probability of $ \XX $. Recall that a random point field $ \mu $ is a probability measure on the configuration space $ \sSS $ by definition. 

Fritz explicitly described the set of starting points for up to four dimensions \cite{Fr}, and the third author of the present paper solved the equation for hardcore Brownian balls \cite{T2}. These results used It$ \hat{\mathrm{o}}$'s method, and required the coefficients to be smooth and have compact support. These conditions exclude physically interesting examples of long-range interaction potentials, such as the Lennard-Jones 6-12 potential and Riesz potentials. 
In particular, the logarithmic potential that appears in random matrix theory is also excluded.

Typical examples of ISDEs with logarithmic interaction potentials are 
the Dyson model in $ \R $ and the Ginibre interacting Brownian motion in $ \R ^2 $: 
\begin{align}& \label{:12a} 
 X_t^i - X_0^i = B_t^i + 
 \frac{\beta }{2} \int_0^t
 \lim_{r\to\infty }\sum_{|X_u^i - X_u^j |< r, \ j\not= i}^{\infty} 
 \frac{1}{X_u^i - X_u^j } du \quad (i\in\mathbb{Z})
\end{align}
and 
\begin{align}\label{:11h}&
 X_t^i - X_0^i = B_t^i 
- \int_0^tX_u^i du + 
\int_0^t
\limi{r} \sum_{|X_u^j|<r,\ j\not=i } 
\frac{X_u^i-X_u^j}{|X_u^i-X_u^j|^2} du \quad (i\in\mathbb{N})
.\end{align}
The Gaussian unitary ensemble relates to the former with $ \beta = 2 $, and the Ginibre ensemble corresponds to the latter. 
These two examples indicate that strong long-range interactions cause significant difficulties in solving ISDEs. 
Indeed, in the conventional approach based on It$ \hat{\mathrm{o}}$'s scheme, the coefficients of the stochastic differential equations (SDEs) must have local Lipschitz continuity. In the case of ISDEs, the coefficients are not defined on the whole space and are never Lipschitz continuous, even locally. 
Nevertheless, Tsai \cite{tsai.14} proved the pathwise uniqueness and existence of strong solutions of \eqref{:12a} with a general $ \beta$. 
For this, he used a specific ISDE structure that was valid only for that model.

The second author of the present paper solved ISDEs using Dirichlet form techniques \cite{o.tp,o.isde,o.rm,o.rm2}. 
The results were applied to an extensive range of interaction potentials, including all of Ruelle's class potentials and the logarithmic potential. 
However, the solution was only a weak solution.

In \cite{o-t.tail}, the second and third authors established a general theory for the existence and pathwise uniqueness of strong solutions $ \mathbf{X}=(X^i)_{i\in\N }$ of ISDEs. 
This result proved the existence of a strong solution and the pathwise uniqueness of solutions under almost the same generality as \cite{o.isde}. 
However, it was assumed that the solution was associated with a quasi-regular Dirichlet form. 
In \cite{o.udf}, we constructed a weak solution associated with a Dirichlet form, which may {\em not necessarily} be quasi-regular. Thus, the uniqueness determined in the previous papers \cite{o.isde,o-t.tail} must be considered unsatisfactory. 

One of the critical assumptions of the general theory in \cite{o-t.tail} is the \lq\lq IFC'' condition. 
This requirement is a weak point of \cite{o-t.tail} (see \sref{s:25}). 
In \cite{o-t.tail}, the IFC condition is verified if the solution of an ISDE is associated with a quasi-regular Dirichlet form. 
The purpose of this paper is to present a sufficient condition under which the IFC condition holds. 
In particular, we shall prove the IFC condition without assuming the quasi-regularity of the associated Dirichlet form (\tref{l:34}) or the symmetry of the dynamics (\tref{l:58}).

We now explain the IFC condition. For a given weak solution $ \XB $ of \eqref{:11y} 
we introduce an \textbf{I}nfinite system of \textbf{F}inite-dimensional SDEs with \textbf{C}onsistency (IFC). 
That is, we consider the family of finite-dimensional SDEs of 
$ \mathbf{Y}^m =(Y^{m,i})_{i=1}^m $, $ m \in \N $, given by 
\begin{align}\label{:13a}
Y_t^{m,i} -Y_0^{m,i} = 
\int_0^t \sigma (Y _u^{m,i}, \YY _u^{m,\idia } + \XX _u^{m*} ) d B_u^i + 
\int_0^t \bbb (Y _u^{m,i}, \YY _u^{m,\idia } + \XX _u^{m*} ) d u 
\end{align}
with the initial condition 
\begin{align} & \label{:13b} 
\mathbf{Y}_0^m 
= \s ^m 
.\end{align}
Here, for each $ m \in \N $, we set $ \s ^m = (s_i)_{i=1}^{m} $ for $ \s =(s_i)_{i=1}^{\infty}$, 
and let $ \mathbf{B}^m = (B^i)_{i=1}^{m} $ denote the $ (\Rd )^m$-valued Brownian motions which is the first $ m $-components of the original infinite-dimensional Brownian motion $ \mathbf{B}=(B^i)_{i\in\N }$. 
Furthermore, 
\begin{align}&\notag 
 \YY _u^{m,\idia } = \sum_{j\not=i}^m \delta_{Y_u^{m,j}} \quad \text{and }\quad 
 \XX _u^{m*} = \sum_{j=m+1}^{\infty} \delta_{X_u^j}
.\end{align}

We set $ \mathbf{X}^{m*}=(X^{i})_{i=m+1}^{\infty} $. Then $ \XX _u^{m*}$ is a function of $ \mathbf{X}^{m*}$. Hence, $ \mathbf{X}^{m*}$ is a component of the coefficients in SDE \eqref{:13a}. 
We regard $ \mathbf{X}^{m*}$ as a random environment and call \eqref{:13a} an SDE of random environment type. 
By construction, SDE \eqref{:13a} becomes time-inhomogeneous although the original ISDE \eqref{:11y} is time-homogeneous. 

We call $ (\mathbf{Y}^m,\mathbf{B}^m,\mathbf{X}^{m*})$ a weak solution of \eqref{:13a} starting at $ \mathbf{s}^m $ if it satisfies \eqref{:13a} and \eqref{:13b}. 
Note that $ \mathbf{Y}^m $ is defined on the same filtered space $ (\Omega ,\mathcal{F},P,\{ \mathcal{F}_t \} )$ 
as $ \XB $. 
We call the weak solution 
$ (\mathbf{Y}^m,\mathbf{B}^m,\mathbf{X}^{m*})$ a strong solution if 
$ \mathbf{Y}^m $ is a function of the initial starting point $ \mathbf{s}^m$ and 
$ (\mathbf{B}^m,\mathbf{X}^{m*})$. 
We postulate that $ \mathbf{Y}^m $ is a function of not only $ \mathbf{B}^m$, but also 
$ \mathbf{X}^{m*}$. 
Thus, the definition of a strong solution is different from the conventional form. 
We say that the pathwise uniqueness of solutions of \eqref{:13a} holds if any two weak solutions 
$ (\mathbf{Y}^m,\mathbf{B}^m,\mathbf{X}^{m*})$ and 
$ (\hat{\mathbf{Y}}^m,\mathbf{B}^m,\mathbf{X}^{m*})$ with $\mathbf{Y}_0^m=\hat{\mathbf{Y}}_0^m $ 
 satisfy 
$ P (\mathbf{Y}^m=\hat{\mathbf{Y}}^m) = 1 $; see \sref{s:24} for details.

We say that ISDE \eqref{:11y} satisfies the IFC condition for a weak solution $ \XB $ starting at $ \mathbf{s}$ 
if the SDE \eqref{:13a} has a pathwise unique strong solution for each $ m \in \N $.

Following \cite{o-t.tail}, we explain how the IFC condition implies the pathwise uniqueness of solutions and the existence of strong solutions. 
Let $ \mathbf{X}^m =(X^1,\ldots,X^m) $ be the first $ m $-components of $ \mathbf{X}=(X^i)_{i\in\N }$.
Obviously, $ (\mathbf{X}^m,\mathbf{B}^m,\mathbf{X}^{m*})$ is a weak solution of \eqref{:13a} for $ \XB $. 
Hence, the IFC condition implies {\em consistency} of $ \mathbf{Y}^m$ 
 in the sense that 
\begin{align}
\label{:13d}&
\mathbf{Y}^m = \mathbf{X}^m 
\quad \text{ $ P $-a.s.\, for each $ m \in \N $}
.\end{align}
The identity \eqref{:13d} plays a crucial role in the general theory in \cite{o-t.tail}. 
Indeed, taking the limit as $ m \to \infty $ in \eqref{:13d}, we obtain 
\begin{align}
\label{:13f}&
\mathbf{X} = \limi{m} \mathbf{Y}^m \quad \text{ $ P $-a.s}
.\end{align}
Let $ \mathcal{T}_{\mathrm{path}}$ be the tail $ \sigma $-field of the labeled process: 
\begin{align}
\label{:13g}&
 \mathcal{T}_{\mathrm{path}} = \bigcap_{m=1}^{\infty}\sigma [\mathbf{X}^{m*}] 
.\end{align}
Note that $ \mathbf{Y}^m $ is a function of $ (\mathbf{s},\mathbf{B},\mathbf{X}^{m*})$. 
Let $ (\mathbf{s},\mathbf{B})$ be fixed. 
Then $ \mathbf{Y}^m $ becomes a function of $\mathbf{X}^{m*} $. 
Because $ \mathbf{Y}^m $ is $ \sigma [\mathbf{X}^{m*}] $-measurable for each $ m \in \N $, 
we have that $ \mathbf{X}$ is $ \mathcal{T}_{\mathrm{path}} $-measurable from 
\eqref{:13f} and \eqref{:13g}. 
Hence, $ \mathbf{X}$ is a function of $ (\mathbf{s},\mathbf{B})$ 
if $ \mathcal{T}_{\mathrm{path}} $ is $ P (\, \cdot \, | ( \mathbf{s},\mathbf{B}) )$-trivial, where 
$ P (\, \cdot \, | ( \mathbf{s},\mathbf{B}) )$ denotes the regular conditional probability of $ P $ 
conditioned at $ (\mathbf{s},\mathbf{B})$. 
Therefore, $ \mathbf{X}$ is a strong solution. 
Similarly, we can prove the pathwise uniqueness in terms of the tail $ \sigma $-field $ \mathcal{T}_{\mathrm{path}} $. 
Thus the problem reduces to the study of the tail $ \sigma $-field $ \mathcal{T}_{\mathrm{path}} $ of the labeled path space under $ P (\, \cdot \, | ( \mathbf{s},\mathbf{B}) )$. 

Let $ \XX = \{ \XX _t \} $ be the unlabeled dynamics of $ \mathbf{X}=(X^i)$ 
such that $ \XX _t = \sum_{i\in\N } \delta_{X_t^i}$. 
By definition $ \XX $ is an $ \sSS $-valued process, where $ \sSS $ is the configuration space over $ \Rd $. 
We assume that $ \XX $ has an equilibrium state $ \mu $. 
Let $ \mathcal{T} $ be the tail $ \sigma $-field of $ \sSS $. 
Then, the triviality of $ \mathcal{T}_{\mathrm{path}} $ under $ P $ can be deduced from 
the triviality of $ \mathcal{T} $ under $ \mu $ \cite[Theorem 5.1]{o-t.tail}. 

All the determinantal random point fields are tail trivial \cite{o-o.tail,ly.18,bqs}. 
Furthermore, even if $ \mu $ is not tail trivial, we can decompose $ \mu $ into tail trivial components for a wide range of random point fields $ \mu $ called quasi-Gibbs measures \cite[Theorem 3.2]{o-t.tail}. From these, we can construct pathwise unique, strong solutions of various ISDEs arising from random matrix theory (see \sref{s:7}).

The IFC condition asserts that a weak solution $ \mathbf{X} $ remains in a well-behaved subset where the coefficients of the finite-dimensional SDEs \eqref{:13a} have sufficient regularity such that \eqref{:13a} has a unique strong solution. Thus, the problem is to prevent $ \mathbf{X}$ from reaching undesirable domains. 
In \cite{o-t.tail}, the second and third authors of the present paper proved the IFC condition for the case where a weak solution $ \XB $ is associated with a quasi-regular Dirichlet form. 
Quasi-regularity of the associated Dirichlet form allows us to use the notion of capacity, 
which is a critical tool in proving such a condition. 
In the present paper, we shall prove the IFC condition without utilizing the concept of capacity.

Once we have established the IFC condition under general requirements in the main theorem \tref{l:31}, we have the uniqueness of weak solutions of ISDEs under the same circumstances. 
This uniqueness yields various striking applications. 
The first application is the uniqueness of Dirichlet forms \cite{o.udf}. 
For a given random point field $ \mu $, there are two natural Dirichlet forms, which are called 
the upper Dirichlet form $ (\E ,\dom )$ and 
the lower Dirichlet form $ (\underline{\E}, \underline{\dom })$ in \cite{o.udf}. 
Each of these satisfies the relation 
\begin{align}\label{:14a}&
 (\underline{\E}, \underline{\dom }) \le (\E ,\dom )
\end{align}
in the sense that $ \underline{\dom } \supset \dom $ and $\underline{\E} (f,f) \le \E (f,f) $ for all $ f \in \dom $. 
Using \tref{l:31} and \cite{o.udf,o-t.tail}, we deduce that the equality 
$ (\underline{\E}, \underline{\dom }) = (\E ,\dom )$ holds in \eqref{:14a}. 

The identity $ (\underline{\E}, \underline{\dom }) = (\E ,\dom )$ has a further application. 
The universality of random matrices is a subject that has been extensively studied over the past two decades. 
The universality implies that under very mild constraints, $ N $-particle systems converge to the equilibrium states appearing from random matrix theory. 
In \cite{k-o.du}, the first and second authors of the present paper derived a dynamical counterpart to this result. 
That is, they proved the weak convergence of the stochastic dynamics naturally associated with $ N $-particle systems to those of limit random point fields. 
For this, they used the identity $ (\underline{\E}, \underline{\dom }) = (\E ,\dom )$, 
which follows from the results of the present paper. 

In \cite{o.dfa}, upper Dirichlet forms are quasi-regular under a quite mild assumption such that 
correlation functions are locally bounded. 
We then present a candidate of a lower Dirichlet form which is not quasi-regular. 
We consider infinite particle systems on $ \R $. 

Let $ m_i $ be positive continuous functions such that $ \limi{|x|} m_i (x) = \infty$ $ (i=1,2) $. 
Let $ \varepsilon (f,g) = \half \int f'g' m_1dx $ be the Dirichlet form on $ L^2 (\R , m_2dx)$ such that 
$  \varepsilon $ is the increasing limit of $ \varepsilon _R(f,g) = \half \int_{[-R , R]} f'g' m_1dx $ on $ L^2 (\R , m_2dx)$ 
with the domain given by the closure of $ C_0^{\infty} (\R )$. 
Assume that the associated diffusion {\em explodes}. 

Let $  \mathcal{P}_i $ be the Poisson point process with intensity $  m_i dx $. 
Consider the bilinear form $ \mathcal{E} = \int \mathbb{D} [f,g ] d\mathcal{P}_1  $ on $ L^2 (\sSS , \mathcal{P}_2 )$. 
Here $ \mathbb{D}$ is given by \eqref{:52a} by taking $  d= 1$ and $ a = 1 $.  
Let $(\underline{\E}, \underline{\dom }) $ and $ (\overline{\E}, \overline{\dom})$ 
be the associated lower and upper Dirichlet form, respectively. 
By the result in \cite{o.dfa} combined with the time change,  
we see $ (\overline{\E}, \overline{\dom})$ is quasi-regular. 
On the other hand, we conjecture that $(\underline{\E}, \underline{\dom }) $ is not quasi-regular.

We emphasize that we shall prove the IFC condition without using the Dirichlet form theory. 
A typical advantage of doing this is that we can apply the result to ISDEs with skew-symmetric interactions. 
See \eref{e:78}. The ISDE \eqref{:78z} is naturally associated with a non-symmetric bilinear form. 
It is, however, not clear that the form satisfies the weak sector condition. 
Lack of the weak sector condition prevents us from using even for non-symmetric Dirichlet form theory \cite{mr}.

The remainder of the paper is organized as follows. 
In \sref{s:2}, we prepare a set of notions to set up the problem. 
 \sref{s:3} states the main theorems (Theorems \ref{l:31}, \ref{l:32}, and \ref{l:33}), and 
 \sref{s:4} presents proofs of Theorems \ref{l:31}, \ref{l:32}, and \ref{l:33}. 
These theorems give a sufficient condition for the IFC condition. 
In \sref{s:5}, we prove other main theorems (Theorems \ref{l:58} and \ref{l:5X}), which present 
the non-collision and non-exit conditions from the tame set for non-symmetric dynamics. 
In \sref{s:6}, we prove another main theorem (\tref{l:34}) 
using Lyons-Zheng type martingale decomposition for solutions of ISDEs. 
\tref{l:34} proves the result in \sref{s:5} for the symmetric case. 
In \sref{s:7}, we present various examples of ISDEs such as sine, Bessel, and Ginibre interacting Brownian motions, as well as interacting Brownian motions with Ruelle's class potentials. 
Furthermore, we present ISDEs with skew-symmetric interactions. 
\sref{s:8} (Appendix I) quotes some general results on ISDEs from previous studies, and 
\sref{s:9} (Appendix II) provides a proof for a Lyons-Zheng type martingale decomposition for solutions of ISDEs.

\section{Preliminaries}\label{s:2} 

\subsection{Configuration spaces and Campbell measures}\label{s:21}
Let $ \sS $ be a closed set in $ \Rd $ such that the interior $ \sS _{\mathrm{int}}$
 is a connected open set satisfying $ \overline{\sS _{\mathrm{int}}} = \sS $ and the boundary $ \partial \sS $ has a Lebesgue measure of zero. 
Let $ \sSS $ be the configuration space over $ \sS $, that is, 
\begin{align} & \label{:21a}
\sSS = \{ \sss = \sum_{i} \delta _{s_i}\, ;\, 
 \text{ $\sss ( K ) < \infty $ for all compact sets $ K \subset \sS $} \} 
.\end{align}
We equip $ \sSS $ with the vague topology, under which $ \sSS $ is a Polish space. 
A probability measure on $ (\sSS , \mathcal{B}(\sSS ) )$ is called a random point field (a point process) 
on $ \sS $. 
Here, in the present paper, we denote by $ \mathcal{B}( \cdot )$ the Borel $ \sigma $-field of $ \cdot $ 
for a topological space $ \cdot $. 

Let $ \mu $ be a random point field on $ \sS $. 
A symmetric and locally integrable function 
$ \map{\rho ^n }{\sS ^n}{[0,\infty ) } $ is called the $ n $-point correlation function of $ \mu $ 
with respect to the Lebesgue measure if $ \rho ^n $ satisfies 
\begin{align} & \notag 
\int_{A_1^{k_1}\ts \cdots \ts A_m^{k_m}} 
\rho ^n (x_1,\ldots,x_n) dx_1\cdots dx_n 
 = \int _{\sSS } \prod _{i = 1}^{m} 
\frac{\sss (A_i) ! }
{(\sss (A_i) - k_i )!} \mu (d\sss )
 \end{align}
for any sequence of disjoint bounded measurable sets 
$ A_1,\ldots,A_m \in \mathcal{B}(\sS ) $ and a sequence of natural numbers 
$ k_1,\ldots,k_m $ satisfying $ k_1+\cdots + k_m = n $. 
When $ \sss (A_i) - k_i < 0$, according to our interpretation, 
${\sss (A_i) ! }/{(\sss (A_i) - k_i )!} = 0$ by convention.

Let $ \tilde{\mu }^{[1]}$ be the measure on $(S\times \sSS , \mathcal{B}(S)\ts \mathcal{B}(\sSS ))$ determined by
\begin{align}&\notag 
\tilde{\mu }^{[1]}(A\times B )=
\int_{ B } \sss (A)\mu(d\sss ), 
\quad A\in\mathcal{B}(S), \; B \in \mathcal{B}(\sSS ) 
.\end{align}
The measure $\tilde{\mu }^{[1]}$ is called the one-Campbell measure of $\mu$. 
If $\mu$ has a one-point correlation function $\rho^1$, there exists a regular conditional probability $\tilde{\mu}_x$ of $ \mu $ satisfying
\begin{align}\notag 
&\int_{A}\tilde{\mu}_x ( B )\rho^1(x)dx = 
\tilde{\mu }^{[1]} (A\times B ), 
\quad A\in\mathcal{B}(S), \; B \in \mathcal{B}(\sSS )
.\end{align}
The measure $\tilde{\mu}_x$ is called the Palm measure of $\mu$ \cite{kal}. 

In this paper, we use the probability measure $\mu_x(\cdot)\equiv \tilde{\mu}_x(\cdot -\delta_x)$, which is called the reduced Palm measure of $\mu$. Informally, $ \mu_x $ is given by 
$  \mu _{x} = \mu (\cdot - \delta_x | \, \sss (\{ x \} ) \ge 1 ) $. 

We consider the Radon measure $ \muone $ on $ \sS \times \sSS $ such that 
$  \muone (dx d\sss ) = \rho ^1 (x) \mu _{x} (d\sss ) dx $. 
We always use $ \muone $ instead of $\tilde{\mu }^{[1]}$. 
Hence, we call $ \muone $ the one-Campbell measure of $ \mu $. 
Similarly, we define $ \mum $ by 
\begin{align}&\notag 
\mum (A\ts B )= \int _{A \ts B } \rho ^{m} (\mathbf{x}) 
\mu _{\mathbf{x}} (d\sss )
d\mathbf{x}
.\end{align}
Here, $ \mu _{\mathbf{x}}$ is the reduced Palm measure of $ \mu $ conditioned at $ \mathbf{x} \in \Sm $. 
We call $ \mum $ the $ m $-Campbell measure of $ \mu $. 
We set $ \mu ^{[0]} = \mu $ and call it the zero-Campbell measure. 
Note that $ \mum $ is not necessarily a probability measure for $ m \ge 1 $ and, in particular, is always an infinite measure if $ \mu $ is translation invariant and does not concentrate at the empty configuration, whereas $\mu ^{[0]} = \mu $ is always a probability measure by definition. 

For a subset $ A \subset \sS $, we set $ \map{\pi_{A}}{\sSS }{\sSS } $ by $ \pi_{A} (\sss ) = \sss (\cdot \cap A )$. 
A function $ f $ on $ \sSS $ is said to be local if $ f $ is $ \sigma [\pi_{K}]$-measurable 
for some compact set $ K $ in $ \sS $. 
For such a local function $ f $ on $ \sSS $ and a relatively compact open set $ O $ in $ \sS $ 
such that $ K \subset O $, we set a function $ \check{f}=\check{f}_{O }$ defined on 
$ \sum_{k=0}^{\infty} O^k $ such that $ \check{f}_{O } (x_1,\ldots x_k)$ restricted to $ O^k $ 
is symmetric in $ x_j $ ($ j=1,\ldots,k$) for each $ k $ and that for $ \xx = \sum _i \delta _{x_i}$ 
\begin{align*}&
f (\xx ) = \check{f}_{O }(x_1,\ldots,x_k) 
.\end{align*}
Here, the case $ k=0$, that is, $ \sS ^0 $ corresponds to a constant function. 
Note that for any relatively compact open sets $ O $ and $ O' $ including $ K $ 
\begin{align}\label{:21h}&
\check{f}_{O }(x_1,\ldots,x_k) = 
\check{f}_{O' } (x_1,\ldots,x_k) 
\quad \text{ for all }(x_1,\ldots,x_k) \in (O \cap O')^k 
.\end{align}
Hence, $ \check{f} $ is well defined. 
We say a local function $ f $ is smooth if $ \check{f} $ is smooth in $ (x_1,\ldots,x_k)$ for each $ k $. 

\subsection{Labeled and unlabeled path spaces}\label{s:22}
For a subset $ A $ of a topological space, the set consisting of the $ A $-valued continuous paths on 
$ [0,\infty) $ is denoted by $ \WT (A)=C([0,\infty);A) $. 
We equip $ \WSN $ with the Fr\'{e}chet metric $ \mathrm{dist}(\cdot,*) $ given by 
\begin{align}& \notag 
\mathrm{dist}(\mathbf{w},\mathbf{w}') = 
\sum_{T=1}^{\infty} \frac{1}{2^{T}} \Big\{
\sum_{n=1}^{\infty} \frac{1}{2^{n}} \min\{ 1, \|w_n-w_n'\|_{C([0,T];\sS )} \} \Big\}
\end{align}
 for $ \mathbf{w}=(w_n)_{n\in\mathbb{N}} $ and 
$ \mathbf{w}'=(w_n')_{n\in\mathbb{N}} $, 
where we set $ \| w \|_{C([0,T];\sS )} = \sup_{t\in[0,T]} |w (t)|$.

Let $ \SST = \{ \sss = \sum_i \delta_{s_i}\} $ be the set of all measures on $ \sS $ consisting of countable point measures. 
By definition, $ \sSS \subset \SST $. 
Let $ \mathbb{\sS } = \{ \bigcup_{\qq =0}^{\infty} \sS ^\qq \} \bigcup \SN $. 
Let $ \map{\ulab }{\mathbb{\sS } }{\SST }$ be such that 
\begin{align}&\notag 
\ulab ((s_i)_i ) = \sum_{i} \delta_{s_i}
.\end{align}
Then, $ \ulab (\s ) = \sss $ for $ \s = (s_i)_{i}$ and $ \sss = \sum_{i} \delta _{s_i}$. 
Here $ \sS ^0 $ is regarded as $\sS ^0 = \{ \emptyset \} $ and $ \ulab (\emptyset )$ equals the zero measure. 
We call $ \ulab $ an unlabeling map.

We endow $ \SN $ with the product topology. 
For $ \www = \{\wwwt \} = \{ (\wti )\} \in \WSN $, we set 
\begin{align}\label{:22c}&
\upath (\www )_t := \ulab (\wwwt ) = \sum_{i=1}^{\infty} \delta_{\wti }
.\end{align}
We call $ \upath (\www ) $ the unlabeled path of $ \www $. 
Note that $ \upath (\www ) $ is not necessarily an element of $ \WS $, even if $\upath (\www )_t \in \sSS $ 
for all $ t $; see \cite[Remark 3.10]{o-t.tail}, for example. 

Let $ \Ssi $ be the subset of $ \sSS $ consisting of an infinite number of particles with no multiple points. 
By definition, 
$ \Ssi = \Ss \cap \Si $, where $ \Ss $ and $ \Si $ are given by 
\begin{align} \label{:22g} &
\Ss = \{ \sss \in \sSS \, ;\, \sss ( \{ x \} ) \le 1 \text{ for all } x \in \sS \} 
,\quad 
 \Si = \{ \sss \in \sSS \, ;\, \sss (\sS ) = \infty \} 
.\end{align} 
A measurable map $ \map{\lab }{\Ssi }{\SN }$ is called a label on $ \Ssi $ if 
$ \ulab \circ \lab (\sss ) = \sss $ for all $ \sss \in \Ssi $.

Let $ \WSs $ and $ \WSsi $ be the sets consisting of all $ \Ss $- and $ \Ssi $-valued continuous paths 
on $ [0,\infty)$. 
Each $ \ww \in \WSs $ can be written as $ \ww _t = \sum_i \delta_{w_t^i}$, 
where $ w^i $ is an $ \sS $-valued continuous path defined on an interval $ I_i $ of the form 
$ [0,b_i)$ or $ (a_i,b_i)$, where $ 0 \le a_i < b_i \le \infty $. 
Taking maximal intervals of this form, we can choose $ [0,b_i)$ and $ (a_i,b_i)$ uniquely up to labeling. 
We remark that 
$ \lim_{t\downarrow a_i} |w_t^i| =\infty $ and 
$ \lim_{t\uparrow b_i} |w_t^i| =\infty $ for $ b_i < \infty $ for all $ i $. 
We call $ w^i$ a tagged path of $ \ww $ and $ I_i $ the defining interval of $ w^i $. Let 
\begin{align}
\label{:22h}&
 \WSsiNE = \{ \ww \in \WSsi \, ;\, I_i = [0,\infty ) \text{ for all $ i $}\} 
.\end{align}
It is said that the tagged path $ w^i $ of $ \ww $ {\it does not explode} if $ b_i = \infty $, 
and {\it does not enter} if 
$ I_i = [0,b_i)$, where $ b_i $ is the right end of the defining interval of $ w^i $. 
Thus, $ \WSsiNE $ is the set consisting of all non-exploding and non-entering paths. 

We can naturally lift each $ \ww = \{\sum_i \delta_{w_t^i}\}_{t\in[0,\infty)} \in \WSsiNE $ 
to the labeled path 
\begin{align}&\notag 
\www =(\w ^i )_{i\in\N } = \{ \wwwt \}_{t\in[0,\infty)} = \{ (\wti )_{i\in\mathbb{N}} \}_{t\in[0,\infty)} \in \WSN 
\end{align}
using a label $ \lab = (\labi )_{i\in\N }$. 
Indeed, for each $ \ww \in \WSsiNE $, we can construct the labeled process 
$ \www =\{ (\wti )_{i\in\mathbb{N}} \}_{t\in[0,\infty)} $ such that 
$ \www _0 = \lab (\ww _0 ) $, because each tagged particle can carry the initial label $ i $ 
by the non-collision and non-explosion properties of $ \ww $. 
We write this correspondence as
\begin{align}&\label{:22i}
 \lpath (\ww ) =(\lpath ^i(\ww ))_{i\in\N }
.\end{align}
Setting $ \www =(\w ^i)_{i\in\N } = \lpath (\ww ) $, we have $ \w ^i =\lpath ^i(\ww ) $ by construction. 
We remark that $ \upath (\www )_t = \ulab (\wt )$ by \eqref{:22c}, whereas 
$ \lpath (\ww )_t \not= \lab (\ww _t)$ in general. 

For a labeled path $ \www = (w^i)$, we set $ \ww ^{m*} = \{ \ww _t^{m*} \}_{t \in [0,\infty )}$ by 
$ \ww _t^{m*} = \sum_{i>m}\delta_{\w _t^i}
$. 

We call the path 
$ \wwwm = (\lpath ^1(\ww ),\ldots,\lpath ^m(\ww ), \ww ^{m*}) $ an $ m $-labeled path. 
Similarly, for a labeled path $ \www =(\w ^i ) \in \WT (\SN )$, we set 
\begin{align}
\label{:22q}&
\www ^{[m]} = (w^1,\ldots,w ^m, \ww ^{m*}) 
.\end{align}

\subsection{ISDEs}\label{s:23}
Let $ \mathbf{X}= (X ^i)_{i\in\mathbb{N}}$ be an $ \SN $-valued continuous process. 
We write $ \mathbf{X} = \{ \mathbf{X}_t \}_{t\in[0,\infty )} $ and 
$ X^i = \{ X_t^i \}_{t\in[0,\infty )}$. 
For $ \mathbf{X}$ and $ i\in\mathbb{N}$, 
we define the unlabeled processes 
$ \XX =\{ \XX _t \}_{t\in[0,\infty )} $ and 
$ \XX ^{\idia } = 
\{ \XX _t^{\idia } \}_{t\in[0,\infty )} $ 
as $ \XX _t = \sum_{i\in\mathbb{N}} \delta_{X_t^i} $ and 
$ \XX _t^{\idia } = \sum_{j\in\mathbb{N} ,\ j\not=i } \delta_{X_t^j} $. 

Let $ \hH $ and $ \SSsde $ be Borel subsets of $ \sSS $ such that 
$ \hH \subset \SSsde \subset \Si $. 
Let $ \map{\ulabone }{\sS \ts \sSS }{\sSS }$ be such that 
$ \ulabone ((x,\sss )) = \delta _{x} + \sss $ for $ x \in \sS $ and $ \sss \in \sSS $. 
Define $ \SSSsde \subset \SN $ and $ \SSsde ^{[1]} \subset \sS \times \sSS $ by %
\begin{align}\label{:23y}&
\SSSsde = \ulab ^{-1} (\SSsde ) 
,\quad \SSsde ^{[1]} = \ulabone ^{-1} (\SSsde ) 
.\end{align}
Let 
$ \map{\sigma }{\SSsde ^{[1]} }{\mathbb{R}^{d^2}}$ and 
$ \map{\bbb }{\SSsde ^{[1]} }{\Rd }$ be Borel measurable functions. 
We consider an ISDE of $ \mathbf{X} =(X^i)_{i\in\mathbb{N}}$ starting on $ \lH $ with state space $ \SSSsde $ such that 
\begin{align}\label{:23a}&
dX_t^i = \sigma (X_t^i,\XX _t^{\idia }) dB_t^i + 
\bbb (X_t^i,\XX _t^{\idia }) dt \quad (i\in\mathbb{N}) 
,\\\label{:23b}&
\mathbf{X} \in \WT (\SSSsde ) 
,\\\label{:23c}&
\mathbf{X}_0 \in \lH 
.\end{align}
Here, $ \mathbf{B}=(B^i)_{i\in\N }$ is an $ \RdN $-valued Brownian motion, where $ \RdN = (\Rd )^{\N }$. 
By definition, $ \{B^i\}_{i\in\N }$ are independent copies of a $ d$-dimensional Brownian motion starting at the origin.

In infinite dimensions, it is natural to consider the coefficients 
$ \sigma $ and $ \bbb $ defined only on a suitable subset 
$ \SSsde ^{[1]} $ of $ \sS \ts \sSS $. 
From \eqref{:23b}, the process $ \mathbf{X}$ moves in the set $ \SSSsde $. 
Equivalently, the unlabeled dynamics $ \XX =\upath (\mathbf{X})$ 
move in $ \SSsde $. 
Moreover, each tagged particle $ X^i $ of $\mathbf{X}= (X^i )_{i\in\N }$ never explodes.

By \eqref{:23b}, $ \XX _t \in \SSsde $ for all $ t \ge 0$, and 
in particular the initial starting point $ \s $ in \eqref{:23c} is assumed to satisfy 
 $ \s \in \lH \subset \SSSsde $, which implies $ \ulab (\mathbf{s} ) \in \hH \subset \SSsde $. 
We take $ \hH $ such that \eqref{:23a}--\eqref{:23c} 
have a solution for each $ \s \in \lH $. 

Following \cite[Chapter IV]{IW} in finite dimensions, we present a set of notions related to solutions of ISDEs. 
In \dref{d:21}, we used the terminology ``weak solution'' instead of ``solution'' to distinguish it from the strong solution in \dref{d:24}. 

\begin{definition}[weak solution]\label{d:21} 
A weak solution of ISDE \eqref{:23a}--\eqref{:23b} is 
an $ \sS ^{\mathbb{N}} \ts \RdN $-valued continuous stochastic process $ \XB $ 
defined on a probability space $ (\Omega , \mathcal{F}, P )$ 
with a reference family $ \{ \mathcal{F}_t \}_{t \ge 0 } $ such that \thetag{i}--\thetag{iv} hold. 

\noindent 
\thetag{i} $ \mathbf{X}=(X^i)_{i=1}^{\infty} $ 
is an $ \SSSsde $-valued continuous process. 
Furthermore, $ \mathbf{X}$ is adapted to 
$ \{ \mathcal{F}_t \}_{t \ge 0 }$, 
that is, $ \mathbf{X}_t $ is 
$ \mathcal{F}_t /\Bt $-measurable for each $ \zti $, where 
\begin{align}&\notag 
\Bt = \sigma [ \www _s ; 0\le s \le t ,\, \www \in \WSN ] 
.\end{align}
\thetag{ii} $ \mathbf{B} = (B^i)_{i=1}^{\infty}$ is an $ \RdN $-valued 
\FtB 
with $ \mathbf{B}_0 = \mathbf{0}$. 
\\
\thetag{iii} 
The families of measurable $ \{ \mathcal{F}_t \}_{t \ge 0 } $-adapted 
processes $ \Phi ^i$ and $ \Psi ^i$ defined by 
\begin{align}&\notag 
\Phi ^i(t,\omega ) = \sigma 
(X_t^i(\omega ),\XX _t^{\idia }(\omega ))
,\quad 
\Psi ^i(t,\omega ) = \bbb 
(X_t^i(\omega ),\XX _t^{\idia }(\omega ))
\end{align}
belong to $ \mathcal{L}^{2} $ and $ \mathcal{L}^1 $, respectively. 
Here, $ \mathcal{L}^{p} $ is the set of all measurable, $ \{ \mathcal{F}_t \}_{t \ge 0 } $-adapted 
processes $ \alpha $ such that $ E[ \int_0^T|\alpha (t,\omega)|^p dt ] < \infty $ for all $ T $. 
We can and do take a predictable version of $ \Phi ^i $ and $ \Psi ^i$ (see pp. 45--46 in \cite{IW}). 
\\
\thetag{iv} With probability one, the process $ \XB $ satisfies, for all $ t $, 
\begin{align}&\notag 
X_t^i - X_0^i = 
\int_0^t \sigma (X_u^i,\XX _u^{\idia }) dB_u^i 
 + 
\int_0^t 
\bbb (X_u^i,\XX _u^{\idia }) du \quad (i\in\mathbb{N}) 
.\end{align}
\end{definition}

We say $ \mathbf{X}$ is a weak solution if the accompanied Brownian motion is obvious or not important. 

\begin{definition}[uniqueness in law]\label{d:22}
We say that the uniqueness in law of solutions starting on $ \lH $ for \eqref{:23a}--\eqref{:23b} holds if, 
whenever $ \mathbf{X}$ and $ \mathbf{X}'$ are two solutions 
whose initial distributions coincide, the laws of the processes 
$ \mathbf{X}$ and $ \mathbf{X}'$ on the space $ \WSN $ coincide. 
If this uniqueness holds for an initial distribution $ \delta _{\s }$, then we say that 
the uniqueness in law of solutions for \eqref{:23a}--\eqref{:23b} 
starting at $ \s $ 
holds. 
\end{definition}

\begin{definition}[pathwise uniqueness]\label{d:23}
We say that the pathwise uniqueness of solutions for 
\eqref{:23a}--\eqref{:23b} starting on $ \lH $ holds 
if, whenever $ \mathbf{X}$ and $ \mathbf{X}'$ are two solutions 
defined on the same probability space $ (\Omega , \mathcal{F} ,P )$ with the same reference family $ \{ \mathcal{F}_t \}_{t \ge 0 }$ and the same $ \RdN $-valued \FtB 
$ \mathbf{B} $ such that $ \mathbf{X}_0=\mathbf{X}_0' \in \lH $ a.s., 
\begin{align}\label{:23o}&
P (\text{$ \mathbf{X}_t=\mathbf{X}_t'$ for all $ t \ge 0 $}) = 1 
.\end{align} 
We say that the pathwise uniqueness of solutions starting at $ \s $ of 
\eqref{:23a}--\eqref{:23b} holds 
if 
\eqref{:23o} holds whenever the above conditions are satisfied and $ \mathbf{X}_0=\mathbf{X}_0' = \s $ a.s. 
\end{definition}

We now define a strong solution in a form that is analogous to 
Definition 1.6 in \cite[p. 163]{IW}. 
Let $ \PBr $ be the distribution of an $ \RdN $-valued Brownian motion 
$ \mathbf{B} $ with $ \mathbf{B}_0 = \mathbf{0}$. 
Let $ \WRNz = \{ \www \in \WT (\RdN )\, ; \www _0 =\mathbf{0} \}$. 
Clearly, $ \PBr (\WRNz )= 1 $. 

Let $ \Bt (\PBr )$ be the completion of 
$ \sigma [ \www _s ; 0\le s \le t ,\, \www \in \WRNz ] $ with respect to $ \PBr $. 
Let $ \mathcal{B}(\PBr ) $ be the completion of $ \mathcal{B}(\WRNz ) $ 
with respect to $ \PBr $. 

\begin{definition}[strong solution starting at $ \s $] \label{d:24} 
A weak solution $ \mathbf{X}$ of \eqref{:23a}--\eqref{:23b} 
with an $ \RdN $-valued $ \Ft $-Brownian motion $\mathbf{B}$ 
 defined on $ \OFPF $ is called a strong solution starting at $ \s $ 
if $ \mathbf{X}_0=\s $ a.s.\! and if there exists a function 
$ \map{\Fs }{\WRNz }{\WSN }$ such that 
$ \Bt (\PBr ) /\Bt $-measurable for each $ t $, and $ \Fs $ satisfies 
\begin{align}&\notag 
\mathbf{X} = \Fs (\mathbf{B}) \quad \text{ a.s.}
\end{align}
We also call $ \mathbf{X}= \Fs (\mathbf{B})$ a strong solution starting at $ \s $. 
Additionally, we call $ \Fs $ itself a strong solution starting at $ \s $. 
\end{definition}

\begin{definition}[a unique strong solution starting at $ \s $] \label{d:25} 
We say \eqref{:23a}--\eqref{:23b} have a unique strong solution starting at $ \mathbf{s}$ if 
there exists a function 
$ \map{\Fs }{\WRNz }{\WSN }$ 
such that, for any weak solution $ (\hat{\mathbf{X}},\hat{\mathbf{B}})$ 
of \eqref{:23a}--\eqref{:23b} starting at $ \mathbf{s}$, it holds that 
\begin{align}\notag &
\hat{\mathbf{X}}=\Fs (\hat{\mathbf{B}}) \quad \text{ a.s.}
\end{align}
and if, for any $ \RdN $-valued $ \{\mathcal{F}_t\} $-Brownian motion $ \mathbf{B} $ 
defined on $ \OFPF $ with $ \mathbf{B}_0=\mathbf{0}$, 
the continuous process $ \mathbf{X} = \Fs (\mathbf{B})$ 
is a strong solution of \eqref{:23a}--\eqref{:23b} starting at $ \mathbf{s}$. 
Also we call $ \Fs $ a unique strong solution starting at $ \mathbf{s}$. 
\end{definition}

We next present a variant of the notion of a unique strong solution. 

\begin{definition}[a unique strong solution under constraint] \label{d:26} 
For a condition \As{$ \bullet$}, we say \eqref{:23a}--\eqref{:23b} have a unique strong solution starting at $ \mathbf{s}$ under the constraint \As{$ \bullet$} if 
there exists a function $ \map{\Fs }{\WRNz }{\WSN }$ 
such that, for any weak solution 
$ (\hat{\mathbf{X}},\hat{\mathbf{B}})$ of \eqref{:23a}--\eqref{:23b} starting at $ \mathbf{s}$ 
satisfying \As{$ \bullet$}, it holds that 
\begin{align} &\notag 
\hat{\mathbf{X}}=\Fs (\hat{\mathbf{B}}) \quad \text{ a.s.}
\end{align}
and if for any $ \RdN $-valued $ \{\mathcal{F}_t\} $-Brownian motion $ \mathbf{B}$
defined on $ \OFPF $ with $ \mathbf{B}_0=\mathbf{0}$ 
the continuous process $ \mathbf{X} = \Fs (\mathbf{B})$ 
is a strong solution of \eqref{:23a}--\eqref{:23b} starting at $ \mathbf{s}$ satisfying \As{$ \bullet$}. 
Also we call $ \Fs $ a unique strong solution starting at $ \mathbf{s}$ 
under the constraint \As{$ \bullet$}. 
\end{definition}

\subsection{Finite-dimensional SDEs with random environments} \label{s:24}

Let $ (\sigma (x,\sss ), b (x,\sss )) $ be the coefficients of ISDE \eqref{:23a}. 
We set 
 \begin{align} \label{:24z}&
 \sigma ^m (\mathbf{y},\sss ) = (\sigma (y_i, \yy ^{\idia }+\sss ))_{i=1}^m , 
\quad b ^m (\mathbf{y},\sss ) = 
( b (y_i,\yy ^{\idia }+\sss ))_{i=1}^m 
 .\end{align}
Here, $ \yy ^{\idia } = \sum_{j\not=i}^m \delta_{y_j}$ for $ \mathbf{y}=(y_1,\ldots,y_m)$. 
For a given unlabeled process $ \XX = \sum_{i=1}^{\infty} \delta _{X^i}$, 
we define the functions 
$ \map{ \sigmaXms } 
{[0,\infty) \ts \sS ^{m} }{\mathbb{R}^{d^2}}$ and 
 $ \map{ \bbbXms }{[0,\infty) \ts \sS ^{m} }{\Rd }$ such that 
\begin{align} \label{:24b}
&
 \sigmaXms ( t, (u, \mathbf{v})) = 
 {\sigma } (u , \vv + \XX _t^{m*}) ,\ 
\bbbXms ( t, (u, \mathbf{v})) = {\bbb } (u , \vv + \XX _t^{m*}) 
,\end{align}
where $ (u,\mathbf{v}) \in \Sm $, 
$ \vv = \ulab (\mathbf{v}) := \sum_{i=1}^{m-1} \delta_{v_i} \in \sSS $, where 
$\mathbf{v}=(v_1,\ldots,v_{m-1}) \in \sS ^{m-1} $, and 
\begin{align}&\notag 
\XX ^{m*} = \sum_{i=m+1}^{\infty} \delta_{X^i }
.\end{align}
The coefficients $ \sigmaXms $ and $ \bbbXms $ depend on 
both the unlabeled path $ \XX $ and the label $ \lab $, although we omit $ \lab $ from the notation for simplicity. 
Let $ \SSSsdemt $ be the subset of $ \Sm $ such that 
\begin{align} \notag &
\SSSsdemt 
= \{ \s = (s_1,\ldots,s_m) \in \Sm \,;\, \ulab (\s ) + \XX _t^{m*} \in \SSsde \} 
.\end{align}

Let $ \XB $ be a weak solution of \eqref{:23a}--\eqref{:23b} defined on $ \OFPF $. 
Let 
\begin{align}\label{:24y}&
 \Ps = \pP (\cdot | \mathbf{X}_0=\mathbf{s}) 
.\end{align}
Then $ \XB $ under $ \Ps $ is a weak solution of \eqref{:23a}--\eqref{:23b} starting at $ \s =\lab (\sss )$. %
For such a weak solution $ \XB $ defined on $ \OFPFss $, 
we introduce the SDE with random environment $ \XX = \sum_{i\in\N } \delta_{X^i}$ 
describing $ \mathbf{Y}^m=(Y^{m,i})_{i=1}^m $ given by 
\begin{align}\label{:24f}&
dY_t^{m,i} = 
\sigmaXms (t, (Y_t^{m,i},\mathbf{Y}_t^{m,\idia })) dB_t^i + 
 \bbbXms (t, (Y_t^{m,i},\mathbf{Y}_t^{m,\idia })) dt 
,\\\label{:24g}& 
 \mathbf{Y}_t^m \in \SSSsdemt \quad \text{ for all } t 
,\\\label{:24h}&
 \mathbf{Y}_0^{m} = \s ^m 
.\end{align}
Here, we set $ \mathbf{Y}^{m,\idia } = (Y^{m,j})_{j\not=i}^m $. 
Moreover, 
$ \s ^m=(s_1,\ldots,s_m) $ and 
$ \mathbf{B}^m=(B^1,\ldots,B^m)$ denote the 
first $ m$ components of $ \s =(s_i)_{i\in\N } $ and 
$ \mathbf{B}=(B^i)_{i=1}^{\infty}$, respectively. 

We set $ \mathbf{X}^m = (X^1,\ldots,X^m) $ and $ \mathbf{X}^{m*} = (X^i)_{i=m+1}^{\infty}$.

\begin{definition}\label{d:27}
A triplet $ (\mathbf{Y}^{m},\mathbf{B}^{m},\mathbf{X}^{m*}) $ of 
$ \Ft $-adapted continuous processes defined on $ \OFPFss $ 
is called a weak solution of 
\eqref{:24f}--\eqref{:24h} if it satisfies \eqref{:24g}--\eqref{:24h} and, 
for all $ i \in \N $ and $ t \in [0,\infty)$, 
\begin{align}\notag &
Y_t^{m,i} - Y_0^{m,i}= \int_0^t
\sigmaXms (u, (Y_u^{m,i},\mathbf{Y}_u^{m,\idia })) dB_u^i + 
\int_0^t \bbbXms (u, (Y_u^{m,i},\mathbf{Y}_u^{m,\idia })) du 
.\end{align}
We also call this 
a weak solution of \eqref{:24f}--\eqref{:24g} starting at $ \s ^m $. 
\end{definition}

Clearly, $ (\mathbf{X}^m,\mathbf{B}^{m},\mathbf{X}^{m*})$ under $ \OFPFss $ is a weak solution of 
\eqref{:24f}--\eqref{:24h} for $ P \circ \mathbf{X}_0^{-1}$-a.s.\,$ \mathbf{s}$. 
 $ (\mathbf{B}^{m},\mathbf{X}^{m*})$ is given a priori as a part of 
the coefficients of SDE \eqref{:24f}.

We define the notion of strong solutions and a unique strong solution of \eqref{:24f}--\eqref{:24h}. 
Let $ o \in \sS $ and $ \mathbf{o}^m=(o,\ldots,o) \in \Sm $. We set 
$ \mathbf{X}^{m\circ *} = (\mathbf{o}^m, \mathbf{X}^{m*}) \in \WT (\SN ) $. 
By definition, the first $ m $ components of $\mathbf{X}^{m\circ *}$ consist of the constant path $ \mathbf{o}^m$. 
Here, $ o $ does not have any special meaning; it can be taken as any point in $ \sS $. 
Let 
\begin{align}
&\notag 
\PPPm = P \circ (\mathbf{B}^{m},\mathbf{X}^{m\circ *})^{-1}
.\end{align}
Let $ \WRdzm = \{ \mathbf{v} \in \WT (\Rdm ) \, ; \mathbf{v}_0 =\mathbf{0} \}$. 
We set 
\begin{align}&\notag 
\Ehatm = \overline{\mathcal{B} (\WWdm ) } ^{\PPPm }
.\end{align}
Let $ \Bt (\WWdm )= \sigma [(\mathbf{v}_s,\mathbf{w}_s) ; 0\le s \le t ] $. 
We set
\begin{align}&\notag 
\Ehatmt = \overline{\Bt (\WWdm ) }^{\PPPm }
.\end{align}
Let $ \Btm $ be the $ \sigma $-field on $ \WT (\Rdm )$ such that 
$ \Btm = \sigma [\mathbf{w}_s; 0\le s \le t ]$. 

We now state the definition of a strong solution. 
\begin{definition} \label{d:28}
A weak solution $ (\mathbf{Y}^{m},\mathbf{B}^m,\mathbf{X}^{m*})$ of \eqref{:24f}--\eqref{:24h}
defined on $ \OFPFss $ is called a strong solution 
if there exists a function 
\begin{align} &\notag \map{\Fsm }{\WWdm }{\WT (\Rdm ) }
 \end{align}
such that $ \Fsm $ is $ \Ehatm $-measurable, 
$ \Ehatmt /\Btm $-measurable for all $ t $, and satisfies 
\begin{align}&\label{:24p}
\mathbf{Y}^m = \Fsm (\mathbf{B}^{m},\mathbf{X}^{m\circ *}) \quad \text{$ \Ps $-a.s.}
\end{align}
\end{definition}
For simplicity, we write 
$ \Fsm (\mathbf{B}^{m},\mathbf{X}^{m*}) := \Fsm (\mathbf{B}^{m},\mathbf{X}^{m\circ *}) $. 
Then \eqref{:24p} becomes 
\begin{align}&\notag 
\mathbf{Y}^m = \Fsm (\mathbf{B}^{m},\mathbf{X}^{m*}) \quad \text{$ \Ps $-a.s.}
\end{align}

\bs

The solution $ \mathbf{Y}^{m}$ in \dref{d:28} is defined on $ \OFPFss $, 
where the weak solution $ \XB $ is defined. The Brownian motion $ \mathbf{B}^m $ 
in \eqref{:24f} is the first $ m $ components of $ \mathbf{B} $, and 
$ \mathbf{Y}^{m}$ is a function of not only $ \mathbf{B}^m $, but also $ \mathbf{X}^{m*} $. 
These properties are different from those of the conventional strong solutions of SDEs. 

Note that for any weak solution $ \XB $, we obtain the weak solution 
$ (\mathbf{X}^{m},\mathbf{B}^m,\mathbf{X}^{m*})$ of \eqref{:24f}--\eqref{:24h}. 
We recall the notion of a unique strong solution from \cite{o-t.tail}. 
\begin{definition}\label{d:29} 
The SDE \eqref{:24f}--\eqref{:24h} is said to have a unique strong solution for $ \XB $ under $ \Ps $ if 
there exists a function $ \Fsm $ satisfying 
$ \hat{\mathbf{Y}}^m = \Fsm (\mathbf{B}^{m},\mathbf{X}^{m*}) $ a.s.\,and the conditions in \dref{d:28} 
for any solution $ (\hat{\mathbf{Y}}^m ,\mathbf{B}^m,\mathbf{X}^{m*})$ of 
\eqref{:24f}--\eqref{:24h} defined on $ \OFPFss $. 
\end{definition}
The function $ \Fsm $ in \dref{d:28} is called a strong solution of \eqref{:24f}--\eqref{:24h}. 
The SDE \eqref{:24f}--\eqref{:24h} is said to have 
a unique strong solution $ \Fsm $ defined on $ \OFPFss $ if $ \Fsm $ satisfies the condition in \dref{d:29}. 
The function $ \Fsm $ is unique for $ \PPPm $-a.s. 
Following \cite{o-t.tail}, we set the following condition: 

\smallskip 
\noindent 
\As{IFC} The SDE \eqref{:24f}--\eqref{:24h} has a unique strong solution 
$ \Fsm (\mathbf{B}^m, \mathbf{X}^{m*})$ for $ \XB $ 
under $ \Ps $ for $ \pP \circ \mathbf{X}_0^{-1} $-a.s.\,$ \mathbf{s}$ for all $ m \in \N $.

\subsection{A unique strong solution of ISDEs} \label{s:25}

In \sref{s:25}, we quote results in \cite{o-t.tail}, which use \As{IFC} as one of the main assumptions. 
Similarly as \sref{s:24}, 
we set $ \XB $ to be a weak solution of \eqref{:23a}--\eqref{:23b} defined on $ \OFPF $. 
Let $ \Ps $ be as \eqref{:24y}. 
We quote a sufficient condition for $ \XB $ under $ \Ps $ to be a unique strong solution from \cite{o-t.tail}.

Let $ \TS = \bigcap_{r=1}^{\infty} \sigma [\pirc ] $ 
be the tail $ \sigma $-field on the configuration space $ \sSS $ over $ \Rd $. 
Here, $\pirc $ is the projection $ \map{\pirc }{\sSS }{\sSS }$ such that $ \pirc (\sss ) = \sss (\cdot \cap \Sr ^c)$, 
where $ \Sr = \{ |x| < r \} $. 
Let $ \mu $ be a random point field on $ \sSS $. 
$ \mu $ is said to be tail trivial if $ \mu ( A ) \in \{ 0,1 \} $ for all $ A \in \TS $. 
Let $\WSsiNE $ be as in \eqref{:22h}, and set $ \XX = \ulab (\mathbf{X} )$ as before. 
For $ \mathbf{X}=(X^i)$, we set 
\begin{align}& \label{:25f}
\mrXX = \inf \{ m \in \mathbb{N}\, ;\, 
\min_{t\in[0,T]}|X _t^i | > r \text{ for all } i \in \mathbb{N} \text{ such that } i > m \} 
.\end{align}
We make the following assumptions: 
\smallskip 

\noindent \As{TT} \ $ \mu $ is tail trivial. 

\noindent \As{AC}\ $ \pP \circ \XX _t^{-1} \prec \mu $ for all $ \zzti $. 

\noindent \As{SIN} \ $ \pP ( \XX \in \WSsiNE ) = 1 $. 

\noindent \As{NBJ} \ $ \pP ( \mrXX < \infty ) = 1 $ for each $ r , T \in \mathbb{N} $. 

\smallskip

We define the conditions \As{AC}, \As{SIN}, and \As{NBJ} for 
a probability measure $ \widehat{\pP }$ on $ \WRN $ 
by replacing $ \XX $ and $ \mathbf{X}$ by $ \ww $ and $ \mathbf{w} $, respectively.

We introduce the condition \As{MF} 
for a family of strong solutions $ \{ \Fs \} $ of \eqref{:23a}--\eqref{:23b} 
starting at $ \pP \circ \mathbf{X}_0^{-1} $-a.s.\,$ \mathbf{s}$. 

\noindent 
\As{MF} \quad 
$ \pP (\Fs ( \mathbf{B} ) \in A ) $ is 
 $ \overline{\mathcal{B}(\SN )}^{ \pP \circ \mathbf{X}_0^{-1} } $-measurable in $ \mathbf{s} $ 
for any $ A \in \mathcal{B}(\WSN ) $. 

\smallskip

\noindent 
For a family of strong solutions $ \{ \Fs \} $ satisfying \As{MF} 
 we set 
\begin{align}\label{:25z}&
\pPF = \int \pP ( \Fs (\mathbf{B} ) \in \cdot ) \pP \circ \mathbf{X}_0^{-1} (d\mathbf{s}) 
.\end{align}
We remark that, if $ \XB $ is a weak solution under $ \pP $ and 
a unique strong solution under $ \Ps $ for $ \pP \circ \mathbf{X}_0^{-1}$-a.s.\,$ \mathbf{s}$, 
then \As{MF} is automatically satisfied and 
\begin{align}\label{:25y}&
 \pPF = \pP \circ \mathbf{X}^{-1}
.\end{align}
Here, $ \Fs $ denotes the unique strong solution given by $ \XB $ under $ \Ps $. 
Indeed, $ \mathbf{B}$ is a Brownian motion under both $ \pP $ and $ \Ps $, and 
for $ \pP \circ \mathbf{X}_0^{-1} $-a.s.\,$ \mathbf{s}$ 
\begin{align}\label{:25a}&
\pP ( \Fs (\mathbf{B} ) \in \cdot ) = \Ps ( \Fs (\mathbf{B} ) \in \cdot ) = \Ps (\mathbf{X} \in \cdot )
.\end{align}
Hence we deduce \eqref{:25y} from \eqref{:25z} and \eqref{:25a}.

\begin{definition}\label{dfn:43}
For a condition \As{$ \bullet $}, 
we say \eqref{:23a}--\eqref{:23b} 
has a family of unique strong solutions $ \{ \Fs \} $ starting at $ \mathbf{s}$ for 
$ \pP \circ \mathbf{X}_0^{-1} $-a.s.\,$ \mathbf{s}$ 
under the constraints of \As{$\mathbf{MF}$} and \As{$ \bullet $} if 
$ \{ \Fs \} $ satisfies \As{$\mathbf{MF}$} and $ \pPF $ satisfies \As{$ \bullet $}. Furthermore, 
\thetag{i} and \thetag{ii} are satisfied.

\noindent\thetag{i} 
For any weak solution $ (\hat{\mathbf{X}},\hat{\mathbf{B}})$ under $ \hat{P}$ 
of \eqref{:23a}--\eqref{:23b} with 
$ \hat{P}\circ \hat{\mathbf{X}}_0^{-1} \prec P \circ \mathbf{X}_0^{-1} $ 
satisfying \As{$ \bullet $}, it holds that, for $\hat{P} \circ \hat{\mathbf{X}}_0^{-1}$-a.s. $ \mathbf{s}$, 
\begin{align*}&
\text{$ \hat{\mathbf{X}}=\Fs (\hat{\mathbf{B}}) $  $ \hat{P}_{\mathbf{s}} $-a.s.,}
\end{align*}
where $ \hat{P}_{\mathbf{s}} = \hat{P}(\cdot | \hat{\mathbf{X}}_0={\mathbf{s}})$. 

\noindent 
\thetag{ii} 
For an arbitrary $ \RdN $-valued $ \{\mathcal{F}_t\} $-Brownian motion $ \mathbf{B}$ 
defined on $ \OFPF $ with $ \mathbf{B}_0=\mathbf{0}$, the continuous process 
$ \mathbf{X} = \Fs (\mathbf{B})$ is a strong solution of \eqref{:23a}--\eqref{:23b} satisfying \As{$ \bullet $} 
starting at $ \mathbf{s}$ 
for $ P \circ \mathbf{X}_0^{-1}$-a.s.\,$ \mathbf{s}$. 
\end{definition}

We quote two results from \cite{o-t.tail}. 
Both show usefulness of the \As{IFC} condition. \pref{l:22} \thetag{1} is used in \cite{o.udf} 
to prove the identity $ (\underline{\E}, \underline{\dom }) = (\E ,\dom )$ explained 
 in \sref{s:1}. 
\begin{proposition}	[{\cite[{Theorem 3.1}]{o-t.tail}}]\label{l:21}
Assume \As{TT}. 
Assume that \eqref{:23a}--\eqref{:23b} has a weak solution $ \XB $ satisfying {\INmu}. 
Then, \eqref{:23a}--\eqref{:23b} has a family of unique strong solutions $ \{ \Fs \} $ 
starting at $\mathbf{s} $ for $ P \circ \mathbf{X}_0^{-1} $-a.s.\,$ \mathbf{s} $ under the constraints 
of \As{MF}, {\INmu}. 
\end{proposition}

\begin{proposition}	[{\cite[{Corollary 3.2}]{o-t.tail}}]\ \label{l:22}
Under the same assumptions as \pref{l:21} the following hold. 

\noindent \thetag{1} 
The uniqueness in law of weak solutions of \eqref{:23a}--\eqref{:23b} holds 
under the constraints of {\INmu}. 

\noindent \thetag{2} 
The pathwise uniqueness of weak solutions of \eqref{:23a}--\eqref{:23b} holds 
under the constraints of {\INmu}. 
\end{proposition}

\begin{remark}\label{r:23} \thetag{1}
All determinantal random point fields on continuous spaces are tail trivial \cite{o-o.tail,ly.18,bqs}. 
Suppose that $ \mu $ is a quasi-Gibbs measure in the sense of \dref{d:81}. 
Then, $ \mu $ can be decomposed into 
tail trivial components, and each component satisfies {\INmu}. 
We can apply \pref{l:21} to each component $ ($see \cite[Theorem 3.2]{o-t.tail}$ )$. 
Thus, \As{TT} is not restrictive. 
\\\thetag{2} 
\As{AC} is obvious if $ \mu $ is an invariant probability measure of $ \XX _t $ and 
$ \XX _0 \elaw \mu $. 
All examples in the present paper satisfy this condition. 
\As{SIN} and \As{NBJ} are also mild assumptions. We refer to \cite[Sections 10, 12]{o-t.tail} for sufficient conditions. 
\end{remark}

\section{Main theorems (Theorems \ref{l:31}--\ref{l:33}): A sufficient condition for IFC } \label{s:3}

We shall localize the coefficients of SDE \eqref{:24f} to deduce the IFC condition. 
For this, we introduce a set of subsets in $ \SmSS $. 

Let $ \mathbf{a}=\{ \ak \}_{\qq \in\mathbb{N}} $ be a sequence of 
increasing sequences $ \ak = \{ \ak (\rR ) \}_{\rR \in\mathbb{N}} $ of natural numbers such that 
$ \ak (\rR ) < \ak (\rR +1)$ and $ \ak (\rR ) < \akk (\rR )$ for all $\qq , \rR \in \mathbb{N}$. 
We set 
\begin{align}\label{:30w}&
\Ka = \bigcup_{\qq =1}^{\infty} \K [\ak ] , \quad 
\K [\ak ] =\{ \sss \in \sSS \, ;\, \sss ( \overline{\sS }_{\rR }   ) \le \ak (\rR ) \text{ for all }\rR \in \mathbb{N} \} 
.\end{align}
Here $ \overline{\sS }_{\rR } = \{ x \in \sS ; |x| \le \rR \} $. 
By construction, $ \K [\ak ] \subset \K [\akk ]$ 
for all $ \qq \in\mathbb{N}$. It is well known that $ \K [\ak ]$ is relatively compact 
 in $ \sSS $ for each $ \qq \in \mathbb{N}$.

We introduce an approximation of $ \SmSS $. 
Let $ \Ssi $ be as in \eqref{:22g}. 
By definition, $ \Ssi $ is the set consisting of infinite configurations with no multiple points. 
Let $ \mathbf{x}=(x_1,\ldots,x_m) \in \Sm $, $ \ulab (\mathbf{x}) = \sum_{i=1}^m \delta_{x_i}$, 
and $\sss =\sum_i \delta_{s_i}$. We set 
\begin{align}\notag &
\Ssi ^{[m]} = \{ (\mathbf{x},\sss )\in \SmSS \, ;\, \ulab (\mathbf{x}) + \sss \in \Ssi \} 
.\end{align}
Let $ j,k,l=1,\ldots,m $ and set 
\begin{align} \notag &
\RRprCs = \big\{ \mathbf{x} \in \Srm \, ;\, \ 
\inf_{j\not=k } |x_j-x_k | > 2^{-\pp } ,\ 
\inf_{l,i} |x_l-s_i| > 2^{-\pp } \big\} 
,\\\notag &
\RRprs = \big\{ \mathbf{x} \in \RRr \, ;\, \min_{j\not=k } |x_j-x_k | \ge 2^{-\pp } ,\ 
\inf_{l,i} |x_l-s_i| \ge 2^{-\pp } \big\} 
.\end{align}
Then $ \RRprCs $ is an open set and $ \RRprs $ is its closure in $ \Sm $. 

Let $ \{ \ak ^+ (\rR )\}_{\rR \in\mathbb{N}} $ be such that $ \ak ^+ (\rR ) = 1 + \ak (\rR +1)$. 
We set 
\begin{align} \notag &
\Ha _{\pqr } ^{\circ } = \big\{ (\mathbf{x},\sss ) \in \Ssi ^{[m]} \, ;\, \ 
\mathbf{x} \in \RRprCs ,\, \sss \in \Kakk 
\big\} 
,\\ \notag &
\Hb = \big\{ (\mathbf{x},\sss ) \in \Ssi ^{[m]} \, ;\, \ \mathbf{x} \in \RRprs ,\ \sss \in \Kakk \big\} 
.\end{align}
By construction, $ \Hb $ and $ \Ha _{\pqr } ^{\circ }$ are relatively compact. 
Let 
\begin{align} \notag 
&
 \Ha _{\rr }^{\circ } = \bigcup_{\qq =1}^{\infty} \Ha _{\qq ,\rr }^{\circ }, 
\ 
\Ha _{\qq ,\rr }^{\circ } = \bigcup_{\pp =1}^{\infty} \Hb ^{\circ } 
,\ 
 \Ha _{\rr } = \bigcup_{\qq =1}^{\infty} \Ha _{\qq ,\rr }, 
\ 
\Ha _{\qq ,\rr } = \bigcup_{\pp =1}^{\infty} \Hb 
.\end{align}
We set 
\begin{align}&\label{:30c}
\Ha = \bigcup_{\rr =1}^{\infty} \Ha _{\rr }^{\circ } = \bigcup_{\rr =1}^{\infty} \Ha _{\rr } 
.\end{align}
Although $ \Ha _{\pqr } ^{\circ }$ and other quantities depend on $ m \in \N $, 
we omit $ m $ from the notation.

We set $ \NNN = \NNN _1 \cup \NNN _2 \cup \NNNthree $, where 
\begin{align}
\notag &
\NNN _1 = \{ \rr \in \N \}, \ 
\NNN _2 = \{ (\qq , \rr ) \ ;\, \qq , \rr \in \N \} , \ 
\NNNthree = \{ (\pqr ) \ ;\, \pqr \in \N \}
,\end{align}
and for $ \nnNNN $, we define $ \nn + 1 \in \NNN $ as 
\begin{align}
\notag &
 \nn + 1 = 
\begin{cases}
(\pp + 1, \qq , \rr ) &\text{ for $ \nn = (\pqr ) \in \NNNthree $, }\\
(\qq +1, \rr )&\text{ for $ \nn = (\qq , \rr ) \in \NNN _2$, }\\
 \rr +1 &\text{ for $ \nn = \rr \in \NNN _1$.}
\end{cases}
\end{align}
We write $ \Han = \Ha _{\pqr }$ for $ \nn = (\pqr ) \in \NNNthree $, and set 
$ \Han $ for $ \nn = (\qq ,\rr ) \in \NNN _2$ and $ \nn = \rr \in \NNN _1$ similarly. 
We set $ \HanC $ analogously. 
Clearly, for all $ \nn \in \NNN $
\begin{align}\notag &
\Han ^{\circ }\subset \Han , \quad 
\Han \subset \Hann 
.\end{align}
We shall take the limit in $ \nn $ along with the order $ \nn \mapsto \nn + 1$ such that 
\begin{align} \notag &
\limi{\nn } := \limi{\rr }\limi{\qq }\limi{\pp }
.\end{align}

For $ \nn = (\pqr ) \in \NNNthree $ and $ (\mathbf{x} ,\sss ) , (\mathbf{y} ,\sss ) \in \RRprCs $, we set 
$ (\mathbf{x} ,\sss ) \sim_{\nn } (\mathbf{y} ,\sss ) $ 
if $ \mathbf{x} $ and $ \mathbf{y} $ are in the same connected component of 
$ \RRprCs $ and $ \sss \in \HmnPc $. 
Here $ \Pi _2 $ is a projection $ \map{\Pi _2 }{\SmSS }{\sSS }$ given by $ \Pi _2 (\mathbf{x},\sss ) = \sss $.

Let $ \XB $ be a weak solution of ISDE \eqref{:23a}--\eqref{:23c} defined on $ \OFPF $. 
For $ \mathbf{X}=(X^i)_{i\in\N }$, we set the $ m $-labeled process 
$ \XM =(\mathbf{X}^m, \XX ^{m*}) $ such that 
\begin{align}
\label{:31a}&
\mathbf{X}^m = (X^1,\ldots,X^m), \quad 
\XX _t^{m*} = \sum_{j=m+1}^{\infty} \delta_{X_t^j} 
.\end{align}
Let $ \varsigma _{\nn } \UV $ be the exit time from $ \HanC $. 
By definition, $ \varsigma _{\nn } \UV $ is a function on the $ m $-labeled path space $ \WT (\Sm \times \sSS ) $ such that 
\begin{align}& \label{:31d}
 \varsigma _{\nn } \UV = \inf \{ t > 0 \, ;\, \UV _t \not\in \HanC \} 
.\end{align}

\noindent 
\Ass{B1} $ \XM =(\mathbf{X}^m,\XX ^{m*})$ does not exit from 
$ \Ha = \bigcup_{\nn \in \NNN }\HanC $, that is, 
\begin{align}\label{:31e}& 
\pP (\limi{\nn } \varsigma _{\nn }(\mathbf{X}^m , \XX ^{m*}) = \infty ) = 1 
.\end{align}

We extend the domain of $ \ulab $ from $ \Sm $ to $ \SmSS $ such that 
$ \ulab (\mathbf{x},\sss ) = \ulab (\mathbf{x}) + \sss $. 
Let $ \sigmam $ and $ b^m$ be as in \eqref{:24z}. 
Then we make the following assumptions: 

\smallskip 

\noindent 
\Ass{B2} 
The inclusion $ \ulab (\Ha ) \subset \SSsde $ holds. 
Furthermore, for each $ \nn \in \NNNthree $ and $ T \in \N $, there exists a function $ \FF $ defined on $ \Sm \ts \sSS $ 
satisfying for each $ f \in \{ \sigmam , b^m \} $ and for $ \pP $-a.s.
\begin{align}\label{:31b}&
| f (\mathbf{x} , \XX _t^{m*} ) - f (\mathbf{y},\XX _t^{m*} ) | \le |\mathbf{x} - \mathbf{y} |
 \FF (\mathbf{x} , \XX _t^{m*} ) 
\end{align}
 for all $ 0\le t \le T $ and all $ \mathbf{x} , \mathbf{y} \in \HanC $ such that 
$ (\mathbf{x} ,\XX _t^{m*} ) \sim_{\nn } ( \mathbf{y} , \XX _t^{m*} ) $. 
 
\noindent 
\Ass{B3} 
 The coefficient $\sigmam $ is a constant function and, for each $ \nn \in \NNNthree $ and $ T \in \N $, 
\begin{align}& \label{:31f}
E[ \int_0^T 1_{\HanC } (\mathbf{X}_t ^{m*} , \XX _t^{m*} ) 
 \big| \FF (\mathbf{X}_t ^{m*} , \XX _t^{m*} ) \big|^{ \ppp } dt ] < \infty 
\end{align}
for some $ \ppp > 1 $. 

\noindent 
\Ass{B4} For each $ \nn \in \NNNthree $ and $ T \in \N $, 
\begin{align}\label{:31g}&
\sup \{\big| \FF ( \mathbf{x} , \sss ) \big| ; ( \mathbf{x} , \sss )\in \Han \} < \infty 
.\end{align}
Furthermore, filtrations satisfy $ \{ \mathcal{F}_t '\}=\{ \mathcal{F}_t '' \}$. 
Here $ \{ \mathcal{F}_t '\}$ and $\{ \mathcal{F}_t '' \}$ are filtrations on a measurable space 
$(\Omega ',\mathcal{F}')$.

The critical step is to prove the pathwise uniqueness of 
weak solutions to the finite-dimensional SDE \eqref{:24f} of $ \mathbf{Y}^m$ for $ \XB $. 
\begin{theorem}\label{l:31} Let 
$ (\mathbf{Z}^m , \hat{\mathbf{B}}^m , \hat{\XX }^{m*}) $ and 
$ (\ZmH , \hat{\mathbf{B}}^m , \hat{\XX }^{m*}) $ 
be weak solutions of \eqref{:24f}--\eqref{:24h} defined on 
$(\Omega ',\mathcal{F}', P' ,\{ \mathcal{F}_t '\} )$ and 
$(\Omega ',\mathcal{F}', P' ,\{ \mathcal{F}_t '' \} )$, respectively. Assume that 
\begin{align}
\label{:31m}&
\XBXm 
 \elaw (\mathbf{Z}^m , \hat{\mathbf{B}}^m , \hat{\XX }^{m*}) \elaw 
 (\ZmH , \hat{\mathbf{B}}^m , \hat{\XX }^{m*})
.\end{align}
Let \Ass{B1} and \Ass{B2} hold for $ m \in \N $. 
Let either \Ass{B3} or \Ass{B4} hold for $ m \in \N $. 
Then, 
\begin{align} \label{:31n}&
P ' (\mathbf{Z}^m = \ZmH ) = 1 
.\end{align}
\end{theorem}

Once the pathwise uniqueness of weak solutions has been established, we can deduce the existence of a unique strong solution through an analogy of the Yamada--Watanabe theory. 
By using the same argument of the proof of \cite[Proposition 11.1]{o-t.tail}, \tref{l:31} yields the next theorem.

\begin{theorem} \label{l:32}
Assume that \Ass{B1}, \Ass{B2}, and \Ass{B3} hold for all $ m \in \N $. 
Then, $ \XB $ satisfies \As{$ \mathbf{IFC}$}. 
\end{theorem}

\begin{remark}\label{r:31} 
It is plausible that \tref{l:32} holds if we substitute \Ass{B3} by \eqref{:31g}. 
An additional element $ \hat{\XX }^{m*} $ prevents us from direct usage of the Yamada--Watanabe theory. 
Clearly, the condition \eqref{:31f} is weaker than \eqref{:31g}. 
\end{remark}

For $ l \in \{ 0 \}\cup \N $, let 
$ \mathbf{J}_{[l]} = \{ \mathbf{j}= (j_{k,i})_{1 \le k \le m,\, 1 \le i \le d} 
;\ j_{k,i} \in \{ 0 \} \cup \N ,\, \sum_{k=1}^m \sum_{i=1}^{d} j_{k,i}= l \} 
$. 
We set 
$ \partial _{\mathbf{j}} = \prod_{k,i} (\partial /\partial x_{k,i} )^{j_{k,i}} $ for 
$ \mathbf{j}=( j_{k,i}) \in \mathbf{J}^{[l]} $, where 
$ x_k=(x_{k,i})_{i=1}^d \in \Rd $, 
and $ ({\partial }/{\partial x_{k,i}})^{j_{k,i}}$ denotes the identity if $ j_{k,i}=0 $. 

We assume that there exists some $ \ell = \ell (m) \in \N $ satisfying  \Ass{C1} and \Ass{C2}. 

\smallskip
\noindent 
\Ass{C1} 
For each $ m \in \N $ and $ \nn \in \NNNthree $, there exists a constant $ \Ct \label{;F0j}$ 
satisfying the following. 
For $ \mum $-a.e.\,$ (\mathbf{x} ,\sss ) , (\xi ,\sss ) \in \HanC $ satisfying $ (\mathbf{x} ,\sss ) \sim_{\nn } (\xi ,\sss ) $, 
there exists a set of points $ \{ \mathbf{x} _1,\ldots,\mathbf{x} _k \} $ in $ \Sm $ with 
$ (\mathbf{x} _1,\mathbf{x} _k)=(\mathbf{x} , \xi ) $ such that 
\begin{align}
\label{:33j}&
\sum_{j=1}^{k-1} |\mathbf{x} _j-\mathbf{x} _{j+1}| \le \cref{;F0j} |\mathbf{x} -\xi | 
,\quad 
[\mathbf{x} _j,\mathbf{x} _{j+1}] \ts \{ \sss \} \subset \Han \quad \text{ $ (j=1,\ldots,k-1)$}
,\end{align}
and that $ \partial _{\mathbf{j}} \sigmam _{j,\sss }(t)$ and 
$ \partial _{\mathbf{j}} b^m _{j,\sss }(t)$ are absolutely continuous in 
$ t \in [0,1]$ for each 
$ \mathbf{j} \in\JL{\ellell -1}{m} $. 
Here 
$ \partial _{\mathbf{j}} \sigmam _{j,\sss }(t) := 
(\partial _{\mathbf{j}} \sigmam ) ( t \mathbf{x} _j + (1-t) \mathbf{x} _{j+1} ,\sss ) $ 
and we set $ \partial _{\mathbf{j}} b^m _{j,\sss }(t)$ similarly. 
Furthermore, $[\mathbf{x} _j,\mathbf{x} _{j+1}] $ is the segment connecting 
$ \mathbf{x} _j $ and $\mathbf{x} _{j+1} $.

\smallskip 
\noindent 
\Ass{C2} For each $ \mathbf{j} \in \JL{\ellell }{m} $, there exist 
$ \g _{\mathbf{j}},\, \h _{\mathbf{j}}\in C (\sS ^2 \cap \{ x \not= s \}) $ such that, on $ \Ha $,
\begin{align}& \notag 
\partial _{\mathbf{j}} \sigmam (\mathbf{x} ,\sss ) = 
\Big(
\sum_{j\not = k}^m \g _{\mathbf{j}} (x_k - x_j ) 
+
\sum_{i} \g _{\mathbf{j}} (x_k -s_i ) \Big)_{k=1}^m 
,\ 
\\ \notag &
\partial _{\mathbf{j}} b^m (\mathbf{x} ,\sss ) = 
\Big(
\sum_{j\not = k}^m \h _{\mathbf{j}} (x_k - x_j ) 
+
\sum_{i} \h _{\mathbf{j}} (x_k -s_i ) \Big)_{k=1}^m 
,\end{align}
where $ \mathbf{x}=(x_1,\ldots,x_m) \in \Sm $, 
and the constant $ \Ct (\nn )\label{;U2c}$ is finite for each $ \nn \in \NNNthree $: 
\begin{align} \label{:33m} 
& \cref{;U2c}(\nn ):=
\sup \big\{ \sum_{k=1}^m
\sum_{i} |\g _{\mathbf{j}} (x_k- s_i ) | + 
|\h _{\mathbf{j}} (x_k- s_i ) | ; \, (\mathbf{x} ,\sss ) \in \Han \big\} < \infty 
.\end{align}
We refer to \cite[Lemma 13.1]{o-t.tail} for a simple sufficient condition for \eqref{:33m}. 

\begin{theorem} \label{l:33}
Assume that there exists some $ \ell = \ell (m) \in \N $ satisfying \Ass{C1}--\Ass{C2} for each $ m \in \N $. 
Then, \Ass{B2}, \eqref{:31f}, and \eqref{:31g} hold for each $ m \in \N $. 
\end{theorem}

We shall give sufficient conditions for \Ass{B1} in \sref{s:5} and \sref{s:6}. 

\section{Proofs of \tref{l:31}, \tref{l:32}, and \tref{l:33}} \label{s:4}

In this section, we prove \tref{l:31}, \tref{l:32}, and \tref{l:33}. 

\noindent 
{\em Proof of \tref{l:31} } 
Let $ \varsigma _{\nn }$ be the exit time from the tame set $ \HanC $ defined by \eqref{:31d}. 
From \eqref{:31m}, we see that 
\begin{align}\label{:41o}&
\varsigma _{\nn } (\mathbf{X}^m , \XX ^{m*}) \elaw 
\varsigma _{\nn } (\mathbf{Z}^m ,\hat{\XX }^{m*}) \elaw 
\varsigma _{\nn } (\ZmH ,\hat{\XX }^{m*})
.\end{align}
From \Ass{B1}, we can deduce that, for $ \pP $-a.s., 
$ \varsigma _{\nn } (\mathbf{X}^m , \XX ^{m*}) > 0 $ for sufficiently large $ \nn $. 
Combined with \eqref{:41o}, this yields, for $ \pP $-a.s., 
\begin{align}
\label{:41p}& 
\smn := 
\min \{ 
\varsigma _{\nn } (\mathbf{Z}^m ,\hat{\XX }^{m*}) , 
\varsigma _{\nn } (\ZmH ,\hat{\XX }^{m*}) 
\} > 0 
\quad \text{ for sufficiently large $ \nn $}
.\end{align}

From \eqref{:24z} and \eqref{:24b} we rewrite \eqref{:24f} as 
\begin{align}&\label{:41q}
 \mathbf{Y} _t^m - \mathbf{Y} _0^m = 
\int_0^t \sigmam (\mathbf{Y} _u^m ,{\XX }_u^{m*}) d\BmHu + 
\int_0^t b^m (\mathbf{Y} _u^m ,{\XX }_u^{m*}) du 
.\end{align}
Then $ (\mathbf{X} ^m ,\mathbf{B}^m ,{\XX }_u^{m*})$ is a solution of \eqref{:41q}. 
Hence, we deduce from \eqref{:31m} that 
$ (\mathbf{Z}^m , \hat{\mathbf{B}}^m , \hat{\XX }^{m*}) $ and 
$ (\ZmH , \hat{\mathbf{B}}^m , \hat{\XX }^{m*}) $ 
satisfy 
$ \mathbf{Z}_0^m = \ZmH _0 = \mathbf{s}^m $ and 
\begin{align}\notag &
 \mathbf{Z}_t^m - \mathbf{Z}_0^m = 
\int_0^t \sigmam (\Zum ,\hat{\XX }_u^{m*}) d\BmHu + \int_0^t b^m (\Zum ,\hat{\XX }_u^{m*}) du 
,\\ \notag &
\ZmH _t -\ZmH _0 = 
\int_0^t \sigmam (\ZumH ,\hat{\XX }_u^{m*}) d\BmHu + \int_0^t b^m (\ZumH ,\hat{\XX }_u^{m*}) du 
.\end{align}
From these two equations, we have 
\begin{align}\label{:41t}
\mathbf{Z}_t^m - \ZtmH 
 = & 
\int_0^t 
\{ \sigmam (\Zum ,\hat{\XX }_u^{m*}) - 
 \sigmam (\ZumH ,\hat{\XX }_u^{m*}) \} d\BmHu 
\\ \notag &
+ 
\int_0^t \{
b^m (\Zum ,\hat{\XX }_u^{m*})
 - 
b^m (\ZumH ,\hat{\XX }_u^{m*})
\} du 
.\end{align}

Assume \Ass{B3}. Then, because $ \sigmam $ is constant by assumption, the difference in the martingale terms of 
$ \mathbf{Z}^m$ and $ \ZmH $ is canceled out. 
Hence, we have from \eqref{:41t} 
\begin{align}\label{:41a}&
\mathbf{Z}_t^m - \ZtmH = \int_0^t 
b^m (\Zum ,\hat{\XX }_u^{m*})
 - 
b^m (\ZumH ,\hat{\XX }_u^{m*})
du 
.\end{align}
From \eqref{:31b}, we deduce that, for $ 0 \le u \le \smnT $, 
\begin{align}
\label{:41b}& 
|b^m (\Zum ,\hat{\XX }_u^{m*}) - 
b^m (\ZumH ,\hat{\XX }_u^{m*})| \le 
| \Zum - \ZumH | 
 \FF (\Zum , \hat{\XX }_u^{m*} ) 
.\end{align}
Combining \eqref{:41a} and \eqref{:41b}, the H\"{o}lder inequality gives 
for each $ 0 \le t \le \smnT $ 
\begin{align}
\label{:41c}
|\Ztm - \hat{\mathbf{Z}}_{t }^{m} |^{\qqq } & \le 
\{ 
\int_0^t 
| \Zum - \ZumH | 
 \FF (\Zum , \hat{\XX }_u^{m*} ) du \}^{\qqq } 
\\ \notag &
\le \{ \int_0^{t } |\Zum - \ZumH |^{\qqq }du\}
\{ \int_0^{t } \FF (\Zum , \hat{\XX }_u^{m*} )^{\ppp } du \}^{\qqq / \ppp }
\\ \notag &
\le 
\cref{;41C} 
\int_0^{t } |\Zum - \ZumH |^{\qqq }du 
.\end{align}
Here, $ \qqq $ is the H\"{o}lder conjugate of $ \ppp $ and 
$ \Ct \label{;41C}= 
\{ \int_0^{\smnT } \FF (\Zum , \hat{\XX }_u^{m*} )^{\ppp } du \} ^{\qqq / \ppp } $. 
By \eqref{:31f}, we see that $ \cref{;41C} < \infty $ $ \pP $-a.s. 
Hence, from \eqref{:41c}, we can use Gronwall's lemma to obtain the identity 
$ \mathbf{Z}_t^m = \hat{\mathbf{Z}}_t^m$ until 
$ (\mathbf{Z}^m ,\hat{\XX }^{m*}) $ or 
$ (\ZmH ,\hat{\XX }^{m*}) $ exit from $ \HanC $. 
Then, for all $ 0 \le t \le \smnT $, 
\begin{align}& \label{:41d}
(\mathbf{Z}_t^m ,\hat{\mathbf{B}}_t^m,\hat{\XX }_t^{m*}) = 
(\hat{\mathbf{Z}}_t^m ,\hat{\mathbf{B}}_t^m,\hat{\XX }_t^{m*})
.\end{align}
Taking $ T \to \infty $, we see that \eqref{:41d} holds for all $ 0\le t \le \smn $. 
Because, for $ \pP $-a.s., $0 < \smn $ for sufficiently large $ \nn $ by \eqref{:41p}, 
this coincidence and the definition of $ \smn $ imply that, for $ \pP $-a.s., 
\begin{align}&\notag 
\varsigma _{\nn } (\mathbf{Z}^m ,\hat{\XX }^{m*}) = 
\varsigma _{\nn } (\ZmH ,\hat{\XX }^{m*}) = \smn 
\quad \text{ for sufficiently large $ \nn $}
.\end{align}
Combined with \eqref{:31e} and \eqref{:31m}, this yields, for $ \pP $-a.s., 
\begin{align}
\label{:41f}&
\limi{\nn } \smn = \infty 
.\end{align}
From \eqref{:41d} and \eqref{:41f}, we obtain \eqref{:31n}. 

Next, assume \Ass{B4}. Then the two Brownian motions in \eqref{:41t} are equipped with the same increasing families of 
$ \sigma $-fields such that $ \{ \mathcal{F}_t '\}=\{ \mathcal{F}_t '' \}$. 
Hence we obtain 
\begin{align}\notag &
\int_0^t \sigmam (\Zum ,\hat{\XX }_u^{m*}) d\BmHu - \int_0^t \sigmam (\ZumH ,\hat{\XX }_u^{m*}) d\BmHu 
=
\int_0^t \{ \sigmam (\Zum ,\hat{\XX }_u^{m*}) - \sigmam (\ZumH ,\hat{\XX }_u^{m*}) \} d\BmHu 
.\end{align}
Then by the martingale inequality, we have 
\begin{align}\notag & 
E [ \sup_{v \le t } |\int_0^{v \wedge \smnT }
\{ \sigmam (\Zum ,\hat{\XX }_u^{m*}) - 
 \sigmam (\ZumH ,\hat{\XX }_u^{m*}) \} d\BmHu |^2] 
\\ \notag \le &4
E [\langle \int_0^{\cdot }
\{ \sigmam (\Zum ,\hat{\XX }_u^{m*}) - 
 \sigmam (\ZumH ,\hat{\XX }_u^{m*}) \} d\BmHu \rangle _{t \wedge \smnT } ] 
\\ \notag 
= &4 
E[\int_0^{t \wedge \smnT }
\mathrm{tr}\big( \sigmam (\Zum ,\hat{\XX }_u^{m*}) - \sigmam (\ZumH ,\hat{\XX }_u^{m*})\big)\, 
^t \big( \sigmam (\Zum ,\hat{\XX }_u^{m*}) - \sigmam (\ZumH ,\hat{\XX }_u^{m*})\big)
du
]
.\end{align}
From \eqref{:31b} and \eqref{:31g}, the last line is dominated by 
\begin{align} \notag 
 &\cref{;41h} 
E[\int_0^{t \wedge \smnT }
| \Zum - \ZumH | ^2\FF (\Zum , \hat{\XX }_u^{m*} ) ^2 
du
] 
\quad \text{ by \eqref{:31b}}
\\ \notag 
\le &\cref{;41i} 
E[\int_0^{t \wedge \smnT } 
| \Zum - \ZumH |^2 
du
]
\quad \text{ by \eqref{:31g}}
\\ \notag 
\le &\cref{;41i} 
E[\int_0^{t \wedge \smnT } 
\sup_{u \le v }
| \Zum - \ZumH |^2 
dv 
]
.\end{align}
Here $ \Ct \label{;41h} $ and $ \Ct \label{;41i}$ are constants depending on $ d $, 
$ \nn \in \NNNthree $, and $ T \in \N $. Hence, we obtain 
\begin{align}\label{:41h}&
E [ \sup_{v \le t } |\int_0^{v \wedge \smnT }
\{ \sigmam (\Zum ,\hat{\XX }_u^{m*}) - 
 \sigmam (\ZumH ,\hat{\XX }_u^{m*}) \} d\BmHu |^2] 
\\ \notag &\quad \quad 
\le 
\cref{;41i} 
E[\int_0^{t \wedge \smnT } 
\sup_{u \le v }
| \Zum - \ZumH |^2 
dv 
]
.\end{align}

By \eqref{:31b} and \eqref{:31g} there exists a constant $ \Ct \label{;41I}$ 
depending on $ \nn \in \NNNthree $ and $ T \in \N $ such that 
\begin{align}\label{:41i}&
\sup_{v \le t } |\int_0^{v \wedge \smnT }
 b (\Zum ,\hat{\XX }_u^{m*}) - b (\ZumH ,\hat{\XX }_u^{m*}) du |^2 
\le \cref{;41I} 
\int_0^{t \wedge \smnT } 
\sup_{u \le v } | \Zum - \ZumH |^2 
dv 
.\end{align}
Let $ h (t) = E [\sup_{u \le t \wedge \smnT } | \Zum - \ZumH |^2 ]$. 
Then, by \eqref{:41t}, \eqref{:41h}, and \eqref{:41i} we have 
\begin{align}\notag &
h (t) \le 2(\cref{;41i}+\cref{;41I}) \int_0^t h (u) du
.\end{align}
Hence, by Gronwall's lemma we obtain $ h (t) = 0 $ for all $ t $. 
This implies \eqref{:31n}. 
\qed

Recall that $ (\mathbf{X},\mathbf{B})$ under $ \Ps $ 
is a weak solution of \eqref{:23a}--\eqref{:23b} starting at $ \mathbf{s}$. 
Thus, $ \XBXm $ becomes a weak solution of \eqref{:24f}--\eqref{:24h}. 

\smallskip 
\noindent {\em Proof of \tref{l:32}. } %
The proof of \tref{l:32} is the same as that of \cite[Proposition 11.1]{o-t.tail}. 
We explain the correspondence and omit the details of the proof. 

In \cite[Proposition 11.1]{o-t.tail}, \As{IFC} was deduced from the pathwise uniqueness of a weak solution. The pathwise uniqueness in \cite{o-t.tail} was given in \thetag{11.6} of Lemma 11.2 \thetag{3} in \cite{o-t.tail}. 
In the present paper, we deduce this pathwise uniqueness as \eqref{:31n} in \tref{l:31}. 
The assumptions in \tref{l:32} are the same as in \tref{l:31}, and they are used only to derived the conclusion of \tref{l:31}, that is, the pathwise uniqueness of weak solutions. 

The assumptions of \cite[Proposition 11.1]{o-t.tail} are different from those of \tref{l:32}. 
They were used only to guarantee the existence of weak solutions and the pathwise uniqueness of weak solutions in Lemma 11.2 \thetag{3} in \cite{o-t.tail}. Hence the proof of \cite[Proposition 11.1]{o-t.tail} 
is still valid for \tref{l:32}. 
\qed 

\noindent {\em Proof of \tref{l:33}.} 
For simplicity, we prove the case in which $ m=1$, $ \ellell = 2 $, and $ d = 1 $. 
The general case follows from the same argument. 

Let $ (x,\sss ) , (\xi ,\sss ) \in \HanC $ be such that $ (x,\sss ) \sim_{\nn } (\xi ,\sss ) $ and that $ x < \xi $. 
Then, from \Ass{C1} and $ d=1 $, we see $ [x,\xi]\ts \{ \sss \} \subset \Hann $. 
From the Taylor formula 
\begin{align}\label{:43e}
 b (x,\sss ) - b (\xi ,\sss ) = & 
\int_{\xi }^x \int_{\xi }^y \partial ^2 b (z,\sss ) dzdy + 
(x-\xi ) \partial b (\xi ,\sss ) 
.\end{align}
Let $\cref{;U2c}$ be the constant given by \eqref{:33m}. 
From \eqref{:43e} and \Ass{C2}, we have that 
\begin{align}\label{:43f}
| b (x,\sss ) - b (\xi ,\sss ) |
\le &
\cref{;U2c} (\nn ) 
\Big| \int_{\xi }^x \int_{\xi }^y dzdy \Big| 
 + |x-\xi || \partial b (\xi ,\sss ) |
\\ \notag \le & |x-\xi | \{ 
\cref{;U2c} (\nn ) \rr + | \partial b (\xi ,\sss ) | \} 
.\end{align}
Here, in the last line, we used $ \sup \{ |x-\xi |; x,\xi \in \Hann \} \le 2\rr \sqrt{m} = 2\rr $ because $ m=1 $. 
The same inequality holds for $ \sigma $. Hence, we take 
\begin{align} \label{:43g}&
\FF (x,\sss ) = \{2 \cref{;U2c} (\nn ) \rr + | \partial \sigma (x ,\sss ) | + | \partial b (x ,\sss ) | \} 
.\end{align}
We then immediately deduce \Ass{B2} from \eqref{:43f} and \eqref{:43g}. 

By applying the Taylor formula above to $ \partial \sigma (x ,\sss ) $ and $ \partial b (x ,\sss ) $, 
we obtain 
\begin{align}\label{:43h}&
 \sup \{ | \partial \sigma (x ,\sss ) | + | \partial b (x ,\sss ) | ; (x ,\sss ) \in \Han \} < \infty
.\end{align}
Then, \eqref{:31g} follows from \eqref{:43g} and \eqref{:43h}. 
It is clear that \eqref{:31f} follows from \eqref{:31g}. 
\qed

\section{A sufficient condition for \Ass{B1} in non-symmetric case} \label{s:5}

Throughout this section, $\XB $ is a weak solution of \eqref{:23a} and \eqref{:23b} 
defined on $ \OFPF $. 
We write $ \mathbf{X}=(X^i)_{ i \in\N }$ and $ \XM = (\mathbf{X} ^m,\XX ^{m*}) $. 

The purpose of this section is to present a sufficient condition for \Ass{B1}. 
Assumption \Ass{B1} implies the non-exit of the $ m $-labeled process 
$ \XM $ from $ \Ha $ given by \eqref{:30c}. 
By definition, $ \Ha $ is intersection of the set of the single configurations $ \Si $ and 
the tame set $ \Ka $. 
In \sref{s:5A}, we prove the non-exit of the unlabeled dynamics $ \XX $ from $ \Si $ in \pref{l:51}. 
In \sref{s:5B}, we prove the non-exit from $ \Ka $ in \pref{l:55}. 
The main results in the present section are Theorems \ref{l:58} and \ref{l:5X} given in \sref{s:53}.

\Ssection{Non-collision property } \label{s:5A}
Recall that $ \Ss $ is the subset of $ \sSS $ consisting of configurations with no multiple points. 
In this subsection, we derive a sufficient condition such that solutions move in the subset $ \Ss $. 
In other words, we pursue the condition under which particles do not collide with each other. 
In many examples, the drift coefficient $ b $ is of the form 
\begin{align}&\notag 
b (x,\sss ) = \frac{\beta}{2}\sum_{i}\nabla \Psi (x-s_i)
,\end{align}
where $ \sss =\sum_i \delta_{s_i}$, and $ \Psi (0) = \infty $. 
Hence, $ b (x,\sss ) $ is not well defined if $ \delta_x + \sss \not\in \Ss$. 
Thus, we need some criterion for the non-collision of particles.

We set $ \SR = \{x \in \sS ; |x| < R \} $ and $  \TRe =\{ (x,y)\in \SR ^2 ;\, |x-y|> \varepsilon \} $, where $ R \in \N $ 
and $ 0 \le \epsilon \le 1 $. 
Let $ \tauRe = \tauR ^{\epsilon ,i,j} $ be the exit time of $ (X^i,X^j)$ from $ \TRe $ such that 
\begin{align}&\label{:51q}
\tauRe = \inf \{ t >0 ; (X_t^i,X_t^j) \not\in \TRe \} 
.\end{align}
Let $  \map{\Upsilon }{[0,\infty )}{(0,\infty ]}$ be a positive, convex and decreasing function such that 
$ \Upsilon $ is smooth on $ (0,\infty )$ and $ \Upsilon (0) = \infty $. 
We set $ \upsilon (t) = - 1/ \Upsilon ' (t)$. Then $  \upsilon (0) = 0 $ and $ \upsilon $ 
is positive and increasing. Furthermore,  $ \upsilon $ is an Osgood-type function in the sense that 
\begin{align} \label{:51o}&
\int_0^1 \frac{1}{\upsilon (t) } dt = \infty 
.\end{align}
We set $ \vartheta ( x ) = x / |x| $ for $ x \in \Rd \backslash \{ 0 \} $. 
We make the following assumptions: 

\smallskip

\noindent \Ass{C3} For each $ R , i\not = j \in \N $,
\begin{align}
\label{:51u}&
E[ \Upsilon ( |X_0^i-X_0^j| )  ; (X_0^i,X_0^j) \in \SR \ts \SR ] < \infty 
.\end{align}

\noindent \Ass{C4} For each $ T , R , i\not = j \in \N $, 
\begin{align}\label{:51s}& 
\sup_{0 \le t \le T }
\sup_{0 < \epsilon \le 1 } 
E\Big[ 
 \Big| 
\int_0^{\tauRet }
 \Big( 
 \6 , 
b (X_u^i,\XX _u^{\idia })
 \Big) _{\Rd } 
du 
 \Big| 
\Big]< \infty 
.\end{align}

\noindent 
\Ass{C5} For each $ 0 \le t < \infty $ and $ i\not = j \in \N $, 
\begin{align} \label{:51t}& 
E[\int_0^{t} 
1_{\SR }(X_u^i) 1_{\SR }(X_u^j) 
\Big( \frac{ 1 }{\upsilon (|X_u^i-X_u^j|)^2 } + 
 \frac{ 1 + \upsilon ' (|X_u^i-X_u^j| ) } {\upsilon (|X_u^i-X_u^j|) \, |X_u^i-X_u^j| } \Big)
du ] < \infty 
.\end{align}

We note here $ \upsilon $ and $ \upsilon '$ are positive.

Let $ \sigma = \sigma (x,\sss )$ be the coefficient in $ \eqref{:23a} $. We set 
$ \map{a }{\SSsde ^{[1]} }{\mathbb{R}^{d^2}}$ such that 
\begin{align} \label{:34a}&
a = \sigma {}^t \sigma 
.\end{align}

\noindent 
\Ass{UB} 
$ a = (\akl (x,\sss ) )\klD $ is uniformly elliptic. 
Furthermore, $  a $ is bounded with upper bound $ \Ct \label{;34}$: 
\begin{align} \label{:34b}&
\sumklD \akl (x,\sss ) \xi _{\kK } \xi _{\lL } \le \cref{;34} |\xi | ^2 
\quad \text{ for all } \xi \in \Rd , (x,\sss ) \in \SSsde ^{[1]}
.\end{align}

\begin{proposition} \label{l:51}
Assume that \Ass{C3}--\Ass{C5} and \Ass{UB} hold. Then, 
\begin{align} \label{:51z} &
P (\XX _t \in \Ss \text{ for all } 0\le t < \infty )=1 
.\end{align}
\end{proposition}
\begin{proof}  
For \eqref{:51z}, it is sufficient to prove that, for each pair $ (i,j)$ such that $ i\not=j$, 
\begin{align} \label{:51a}&
P (X_t^i=X_t^j \text{ for some } 0 \le t < \infty ) = 0 
.\end{align}
We only prove \eqref{:51a} for $ (i,j)=(1,2)$, because the proof of the general case 
is similar. 

Let $ \varphi \in C_0^{\infty}((\Rd )^2)$ be such that $0\le \varphi (x,y) \le 1$, 
$ \varphi (x,y) = \varphi (y,x)$, and 
\begin{align}\notag &%
\varphi (x,y) = 
\begin{cases}
1 & (x,y) \in \TRe \\
0 & (x,y) \not\in \sS _{R+1}^2
.\end{cases}
\end{align}
Applying It$ \hat{\mathrm{o}}$'s formula to $ \varphi (x,y) \Upsilon ( |x-y| ) $ with $ (X^1,X^2)$ 
and noting that $ \varphi (x,y) = 1$ on the closure of $ \TRe $, 
we then have that, for each $ 0 < \epsilon \le 1 $ and $ R \in \N $, 
\begin{align} \label{:51d}
& \varphi (X_{\tauRet }^1 , X_{\tauRet }^2 ) 
 \Upsilon ( |X_{\tauRet }^1 - X_{\tauRet }^2| ) 
 = 
\varphi ( X_{0}^1,X_{0}^2 ) \Upsilon ( |X_{0}^1-X_{0}^2| ) 
\\ \notag & - 
 \sum_{\bullet } \int_0^{\tauRet } 
 \Big( \6 , \sigma (X_u^i,\XX _u^{\idia }) dB_u^i \Big) _{\Rd }
\\ \notag &  - 
 \sum_{\bullet } \int_0^{\tauRet } 
\Big(  \6 , b (X_u^i,\XX _u^{\idia })  \Big) _{\Rd } du 
\\ \notag & + 
 \sum_{\bullet } \int_0^{\tauRet } 
 \Big( \half a (X_u^i,\XX _u^{\idia })  \6  , \6  \Big) _{\Rd } du 
 \\  \notag & +
 \sum_{\bullet } \int_0^{\tauRet }
 \Big( \half a (X_u^i,\XX _u^{\idia })
\upsilon ' (|X_u^i-X_u^j|)
\6 ,
\frac{ \vartheta (X_u^i-X_u^j) }{ |X_u^i-X_u^j| }
  \Big) _{\Rd }
  du 
\\ \notag & - 
\sum_{\bullet } \int_0^{\tauRet } \frac{1}{ \upsilon (|X_u^i-X_u^j| ) \, |X_u^i-X_u^j| }
\sumkD  \half a_{\kK \kK } (X_u^i,\XX _u^{\idia }) du 
.\end{align}
Here, the sum $ \sum_{\bullet }$ is taken over $ (i,j)=(1,2), (2,1)$. 
We shall estimate each term of the right-hand side. 
Without loss of generality, we assume that $ (i,j)=(1,2)$ in the rest of the proof, 
and estimate the expectation of each term on the right-hand side of \eqref{:51d}. 

A direct calculation, together with \eqref{:34a}, yields 
\begin{align}
\label{:51f}&
 E[ |
\int_0^{\tauRet } 
 \Big( 
 \7 
\sigma (X_u^1,\XX _u^{\onedia }) 
, 
dB_u^1
 \Big)_{\Rd } 
 |^2]
\\ \notag =&
E[ \Big\langle 
\int_0^{\cdot } 
\Big( 
 \7  
\sigma (X_u^1,\XX _u^{\onedia }) 
, 
dB_u^1 \Big)_{\Rd } 
\Big\rangle_{\tauRet } ] 
\\ \notag = & E[ 
\int_0^{\tauRet } 
 \Big( a (X_u^1,\XX _u^{\onedia }) \7  , 
\7  \Big) _{\Rd } du ] 
.\end{align}
By \eqref{:34b} and \eqref{:51t}, we can see that for each $ 0 < \epsilon \le 1 $ and $ 0 \le t < \infty $ 
\begin{align}\label{:51g}&
 E[ \int_0^{\tauRet } 
 \Big( a (X_u^1,\XX _u^{\onedia }) \7  , 
\7  \Big) _{\Rd } du ]
\\ \notag \le & \ \cref{;34} E[ 
\int_0^{\tauRet } \frac{1}{\8 } du ] \quad \text{ by \eqref{:34b}}
\\ \notag 
 \le & \ \cref{;34} 
E[ \int_0^{t} \frac{1_{\SR }(X_u^1) 1_{\SR }(X_u^2) } {\8 }du ] 
< \infty \quad \text{ by \eqref{:51t}}
.\end{align}

Next, we prove the $ L^1$-boundedness of each term of \eqref{:51d} in 
$ 0\le t \le \tauReT $ and $ 0 < \epsilon \le 1 $ for each $ \TRN $. 
By \eqref{:51u}, the first term on the right-hand side of \eqref{:51d} is in $ L^1 $. 
By \eqref{:51f} and \eqref{:51g}, the second term in \eqref{:51d} turns to be $ L^2 $-martingale. 
Thus, these terms are uniformly integrable. 
By \eqref{:51s}, the third term on the right-hand side of \eqref{:51d} is $ L^1$-bounded. 
From \eqref{:51t}--\eqref{:34b},  we see that the fourth, fifth, and sixth terms on the right-hand side are $ L^1 $-bounded. 

%

Collecting these, we have that all the terms on the right-hand side are $ L^1$-bounded. 
Thus, we deduce that the left-hand side of \eqref{:51d} is $ L^1$-bounded in 
$ 0\le t \le \tauReT $ and $ 0 < \epsilon \le 1 $ for each $ \TRN $, that is, 
\begin{align}& \label{:51l}
\sup_{ 0\le t \le \tauReT ,\, 0 < \epsilon \le 1 }
 E[ \varphi (X_{\tauRet }^1 , X_{\tauRet }^2 ) 
\Upsilon ( \big|X_{t\wedge \tauRe }^1 - X_{t\wedge \tauRe }^2 \big| ) ] < \infty
.\end{align}

We see that $ \tauRz =\limz{\epsilon }\tauRe $ because $ \{ (x,x)\in \sS ^2 \} $ is a closed set. 
Then, taking $ t \to T $ and then $ \epsilon \to 0 $, 
we have from Fatou's lemma and \eqref{:51l} that for each $ \TRN $ 
\begin{align} \label{:51m}
 E \Big[&
\varphi (X_{T\wedge \tauRz }^1 , X_{T\wedge \tauRz }^2 ) 
\Upsilon ( \big|X_{T\wedge \tauRz }^1 - X_{T\wedge \tauRz }^2 \big| ) \Big] 
\\ \notag & \le 
\liminfz{\epsilon }\lim_{ t \to T }
E \Big[  \varphi (X_{\tauRet }^1 , X_{\tauRet }^2 ) 
\Upsilon ( \big|X_{t\wedge \tauRe }^1 - X_{t\wedge \tauRe }^2 \big| ) \Big] 
< \, \infty 
.\end{align}
Let $ \tauR $ be the exit time of $ (X^1,X^2)$ from $ \SR ^2 $. 
Then we deduce $ T\wedge \tauRz = T\wedge \tauR $ a.s.\,for all $ T , R \in \N $ from \eqref{:51m}. 
Hence, $ \tauRz = \tauR $ a.s.\,for all $ R \in \N $. 

By assumption, each tagged particle $ X^i $ of $ \mathbf{X}=(X^i)_{i\in\N }$ does not explode. 
Hence, $ \limi{R} \tauR = \infty $ a.s. 
Together with $ \tauRz = \tauR $ a.s.\, for all $ R \in \N $, this implies 
\begin{align}\label{:51p}&
\limi{R} \tauRz = \infty 
.\end{align}
Let $ \tauz = \inf \{t> 0; (X_t^1,X_t^2) \in \{ x=y \} \} $ be the first hitting time of $ (X^1,X^2)$ to the set $ \{ x=y \} \subset \sS ^2$. 
Then $ \tauz = \limi{R} \tauRz $. Hence \eqref{:51p} implies $ \tauz = \infty $ a.s. 
Therefore, we deduce that $ X^1 $ and $ X^2 $ do not collide with each other. 
\end{proof}

\Ssection{Non-exit from $ \Ka $. }\label{s:5B}

Let $ \Ka $ and $ \Kak $ be the sets given by \eqref{:30w}. 
Let $ \kappaq $ be the exit time of $ \XX $ from $ \Kak $, that is, 
$ \kappaq = \inf \{ t>0; \XX _t \notin \Kak \} $. We set $ \kappai := \limi{\qq } \kappaq $. 

In \sref{s:5B}, we shall prove non-exit of $ \XX $ from $ \Ka $ 
in such a way that 
\begin{align}
\label{:55z}&
\pP (\kappai = \infty )= 1
.\end{align}
The strategy of the proof is to reduce the problem to the construction of a specific function 
$ \chiwtI $ on $ \sSS $ in \eqref{:54p} 
that diverges on $ \Ka ^c$ and satisfies $ E [|\chiwtI ( \XX _{t\wedge \kappai } )|] < \infty $. 

For $ \qQ \in \N \cup \{ \infty \} $ 
let $ \KaQ = \bigcup _{\qq = 1}^{\infty} \KaQk $, where $ \KaQk $ is such that 
\begin{align} \notag 
\KaQk &=\{ \sss \in \sSS \, ;\, \sss (\SrR ) \le \ak (\rR ) \text{ for all }\rR \le \qQ \} \quad \text{ for } \qQ < \infty 
,\\\notag &
= \{ \sss \in \sSS \, ;\, \sss (\SrR ) \le \ak (\rR ) \text{ for all }\rR < \infty \} \quad \text{ for } \qQ = \infty 
.\end{align}
Then $ \Ka = \KaQ $ and $ \Kak = \KaQk $ for $ \qQ = \infty $. 

Recall that $ \mathbf{a}=\{ \ak \}_{\qq \in\N } $ is a sequence 
of increasing sequences $ \ak = \{ \ak ( \rR ) \}_{\rR \in\mathbb{N}} $ and that 
$ \ak ^+ = \{1 + \ak (\rR +1)\}_{\rR =1}^{\infty}$ for 
$ \ak =\{ \ak (\rR ) \}_{\rR =1}^{\infty} $. 
Both $ \{ \KaQk \}_{\qq =1} ^{\infty}$ and $ \{ \KQ [\ak ^+ ] \}_{\qq =1} ^{\infty}$ 
are increasing sequences of compact sets in $ \sSS $ if and only if $ \qQ = \infty $. 
In addition to \eqref{:30w}, assume that, for all $ \qQ \in \N \cup \{ \infty \} $, 
\begin{align} & \label{:52m} 
 \KaQk \subset \KQ [\ak ^+ ] \subset \KQ [\akk ] 
.\end{align}
Note that $ \KaQk \subset \KQ [\ak ^+ ] $ is clear because $ \ak < \ak ^+ $. 
Suppose 
$ \ak ( \rR ) = C(\qq ) \rR ^{\alpha }$ for some $ \alpha > 0 $ and an increasing function $ C (\qq )$ 
with $ C (\qq ) \to \infty $. Then, taking a new sequence from 
$ \mathbf{a}=\{ \ak \}_{\qq \in\mathbb{N}} $ more intermittently, 
we can easily retake $ \mathbf{a}$ such that $ \ak ^+ (\rR ) < \akk (\rR )$ for all $ \rR \in \N $. 
For such $ \mathbf{a} $ we obtain $ \KQ [\ak ^+ ] \subset \KQ [\akk ]$. 
Then \eqref{:52m} holds.

We set for $ \qQ \in \N \cup \{ \infty \} $
\begin{align}
\label{:52p}&
\LL [\ak ] = \KQ [\ak ^+ ] \backslash \KQ [\ak ] 
.\end{align}
Then we have from \eqref{:52m} 
\begin{align} \label{:52q}&
\LL [\ak ] \cap \LL [{a}_{\rr } ] = \emptyset \quad \text{ for each }\qq \not= \rr \in \N 
.\end{align}

We next generalize $ \check{f}$ given by \eqref{:21h} to non-local functions $ f $. 

Let $ \SST $ be the set of all countable sums of point measures on $ \sS $ including the zero measure. 
Let $ \mathbb{\sS } = \{ \bigcup_{\qq =0}^{\infty} \sS ^\qq \} \bigcup \SN $ as before. 
For a function $ f $ defined on $ \SST $, there exists a unique function $ \check{f}$ 
defined on $ \mathbb{\sS }$ such that $ \check{f}|_{\sS ^m} $ is symmetric in 
$ \mathbf{s} = (s_i)_{i=1}^m$ and that 
$ \check{f} ( \mathbf{s} ) = f (\ulab (\mathbf{s} )) $, 
where $ m\in \N \cup \{ \infty \} $ and $ \sS ^{\infty }= \SN $.

By convention, $ \sS ^0 = \{ \emptyset \} $ denotes the set consisting of the empty set and 
$ \check{f}|_{\sS ^0}$ is a constant. 
For a function $ f $ on $ \sSS $, we define a function $f_{\bullet} $ on $ \SST $ 
by taking $ f_{\bullet} (\sss ) = 0 $ for $ \sss \in \SST \backslash \sSS $. 
Then we take $ \check{f} $ for $ f $ as the restriction of $ \check{f_{\bullet}}$ on $ \ulab ^{-1}(\sSS ) $. 
The relation between $ \check{f}$ and $ \check{f}_{\SR }$ given by \eqref{:21h} for 
a $ \sigma [\SR ]$-measurable local function $ f $ is, if $ x_1,\ldots,x_m \in \SR $ and 
$ x_j \not\in \SR $ for $ j > m$, 
\begin{align}\label{:52u}
\check{f} (x_1,\ldots,x_m,x_{m+1},,\ldots ) = \check{f}_{\SR }(x_1,\ldots,x_m)
.\end{align}

Let $ \SRm = \SR \ts \cdots \ts \SR $ be the $ m$-product of $ \SR $. 
Let $ \SSRm = \{ \sss \in \sSS \, ;\, \sss (\SR ) = m \} $ for $ \rR , m \in\N $. 
We set maps $ \map{\piR , \piRc }{\sSS }{\sSS }$ such that $ \piR = \pi _{\SR }$ and $ \piRc = \pi _{\SR ^c}$. 
For $\sss\in \SSRm $, we call 
$\mathbf{x}_{\rR } ^m(\sss)=(x_{\rR } ^{i}(\sss))_{i=1}^m \in \SRm $ 
an $\SRm $-coordinate of $\sss$ if $ \piR (\sss)=\sum_{i=1}^m \delta_{x_{\rR } ^{i}(\sss)}$. 

For a function $f: \sSS \to \R$ and $\rR , m \in\N $, we define 
an $ \SRm $-representation $ \{ f_{\rR ,\sss}^m \}_{\sss }$ of $ f $ 
using an $\SRm $-coordinate $\mathbf{x}_{\rR } ^m(\sss)$ of $\sss$. 

\begin{definition}\label{d:52} 
We call 
$ \{ f_{\rR ,\sss}^m \}_{\sss } $ 
an $ \SRm $-representation of $ f $ if \thetag{1}--\thetag{4} hold. 

\noindent 
\thetag{1} $f_{\rR ,\sss}^m$ is a permutation invariant function on $\SRm $ for each $\sss\in \SSRm $.
\\
\thetag{2} $f_{\rR ,\sss(1)}^m =f_{\rR ,\sss(2)}^m $ if $\piRc (\sss(1))=\piRc (\sss(2))$ for $\sss(1), \sss(2)\in \SSRm $.
\\
\thetag{3} $f_{\rR ,\sss}^m(\mathbf{x}_{\rR } ^m(\sss))=f(\sss)$ for $\sss\in \SSRm $. 
\\
\thetag{4} $f_{\rR ,\sss}^m(\mathbf{x}_{\rR } ^m(\sss))=0$ for $\sss\notin \SSRm $.
\end{definition}
By definition, we have a relation among $ \check{f}$, 
 $\mathbf{x}_{\rR } ^m $, and $ f_{\rR ,\sss}^m$ such that 
$$ \check{f} (\mathbf{x}_{\rR } ^m(\sss), \mathbf{s} ) = f_{\rR ,\sss}^m (\mathbf{x}_{\rR } ^m(\sss))
\quad \text{ for } \sss \in \SSRm 
.$$
We say that a function $ f $ on $ \sSS $ is of $ C^k $-class if its $ \SRm $-representation $ f_{\rR ,\sss}^m $ 
is in $ C^k (\SRm )$ for each $ \rR , m \in \N $ and $ \sss \in \sSS $. 
Let $ C^k (\sSS ) $ be the set consisting of the functions of $ C^k $-class. 
We set $ C^{\infty} (\sSS ) = \cap_{k=0}^{\infty} C^k (\sSS )$. 
Note that a function $ f $ on $ \sSS $ of $ C^k $-class is not necessary continuous on $ \sSS $ 
because we equip $ \sSS $ with the vague topology.

Let $ a $ be given by \eqref{:34a} and $ \DDDa $ be the carr\'{e} du champ operator such that 
\begin{align}
\label{:52a}&
\DDDa [f,g] (\sss ) = 
\half \sum_{i} (a (s_i , \sss ^{\idia }) 
\PD{\check{f}}{s_i}, \PD{\check{g}}{s_i} )_{\Rd }
.\end{align}
By \eqref{:52u}, we easily see that $ \DDDa [f,f] $ does not depend on the choice of 
$ \check{f} $ or $ \check{f}_{\SR } $ for a $ \sigma [\piR ]$-measurable function $ f $.

Next, we introduce a family of cut-off functions $ \{ \chiqQ \}_{\qq \in\N } $. 

We take a label $ \lab = (\labi )$ such that $ |\labi (\sss )| \le |\lab ^{i+1} (\sss )|$ for all $ i $. 
We set for $ \qQ \in \N \cup \{ \infty \} $ 
\begin{align}\label{:52f}& 
\dkQ (\sss ) = 
\Big\{ \sum_{\rR =1}^{\qQ } \sum_{i\in \Jrs (\ak ) }
(\rR - |\labi (\sss )|)^2 \Big\}^{1/2} 
,\end{align}
where 
$ \Jrs (\ak ) = \{ i \, ;\, i > \ak (\rR ) ,\, \labi (\sss ) \in \SR \} $. 
Let $ \theta \in C^{\infty}(\R )$ such that $ 0 \le \theta (t) \le 1 $ for all $ t \in \R $, 
$ \theta (t) = 0 $ for $ t \le \epsilon $, and 
$ \theta (t) = 1 $ for $ t \ge 1 - \epsilon $ for a sufficiently small $ \epsilon > 0 $. 
Furthermore, we assume that $ |\theta ' (t)| \le \sqrt{2} $ for all $ t $. 
Let 
\begin{align} \label{:52g}& 
\chiqQ (\sss )
= \theta \circ \dkQ (\sss )
.\end{align}
\begin{lemma} \label{l:52}
\thetag{1} For each $\qq \in \N $ and $ \qQ \in \N \cup \{ \infty \} $, $ \chiqQ \in C^{\infty} (\sSS )$. 
\\\thetag{2} 
Assume \eqref{:52m}. Then, $ \chiqQ $ satisfies the following: 
\begin{align}\label{:52b}
&
0 \le \chiqQ \le 1 
,
&
\chiqQ (\sss ) = 
\begin{cases}
0 & \text{ for } \sss \in \KaQk \\
1 & \text{ for } \sss \not\in \KQ [\ak ^+] 
,\end{cases}
\\
\label{:52c}&
0\le \DDDa [\chiqQ ,\chiqQ ] \le \cref{;34}, 
&
\DDDa [\chiqQ ,\chiqQ ] = 0 \text{ for } \sss \not\in \LL [\ak ] 
.\end{align}
Here $ \KaQk $, $ \KQ [\ak ^+] $, and $ \LL [\ak ]$ are the same as in \eqref{:52p}, and 
$ \cref{;34}$ is given by \eqref{:34b}. 
\end{lemma}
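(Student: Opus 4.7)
The plan is to verify each property of $\chiqQ = \theta \circ \dkQ$ by direct inspection of its defining formula.

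For part (1), I would write $\chiqQ(\sss) = \tilde\theta(\dkQ(\sss)^2)$, where $\tilde\theta(t) := \theta(\sqrt{t})$ extends smoothly to $[0,\infty)$ because $\theta$ vanishes identically on a neighbourhood of $0$. It then suffices to check smoothness of $\dkQ^2$ in the $\SRm$-representation. Fixing the outside configuration $\piRc(\sss)$, the sum $\dkQ^2$ becomes a locally finite combination of terms of the form $((\rr - |x_j|)^+)^2$ weighted by rank-selection indicators; the crossings of the spheres $\{|x| = \rr\}$ or of the rank ordering each produce only $C^1$-type kinks, and composition with $\tilde\theta$, all of whose derivatives vanish at $0$, absorbs these singularities to all orders.

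For part (2), the bounds $0 \le \chiqQ \le 1$ are immediate from $0 \le \theta \le 1$. When $\sss \in \KaQk$ we have $\sss(\sS_\rR) \le \ak(\rR)$ for every $\rR \le \qQ$, so $\Jrs(\ak) = \emptyset$ for each such $\rR$, forcing $\dkQ(\sss) = 0$ and $\chiqQ(\sss) = \theta(0) = 0$. Conversely, if $\sss \notin \KQ[\ak^+]$ there is a witness $\rR_0 \le \qQ$ with $\sss(\sS_{\rR_0}) \ge \ak(\rR_0 + 1) + 2$; I would isolate the $\rr = \rR_0 + 1$ summand in $\dkQ^2$ (using $\rr = \qQ$ in the boundary case $\rR_0 = \qQ < \infty$), noting that the at least two indices $i \in (\ak(\rR_0+1), \sss(\sS_{\rR_0})]$ satisfy $|\labi| < \rR_0$ and therefore each contribute $(\rR_0 + 1 - |\labi|)^2 > 1$. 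This yields $\dkQ^2 > 2$, hence $\chiqQ = \theta(\dkQ) = 1$.

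The carré du champ bound is the computational heart. Because only $\lab^{i_k} = s_k$ depends on the coordinate $s_k$, a direct computation gives
\[
\partial_{s_k}(\dkQ^2) = -2\vartheta(s_k)\sum_{\rr \in A_k}(\rr - |s_k|),
\qquad A_k := \{\rr \le \qQ : |s_k| < \rr,\ i_k > \ak(\rr)\},
\]
where $i_k$ is the rank of $s_k$. The decisive observation is that on the support of $\theta'(\dkQ)$ one has $\dkQ^2 \le 1$, which forces $(\rr - |s_k|)^2 \le 1$ term by term and hence $\rr \in (|s_k|, |s_k| + 1]$, so $|A_k| \le 1$. A Cauchy--Schwarz rearrangement then gives $\sum_k |\partial_{s_k}(\dkQ^2)|^2 \le 4\,\dkQ^2$, whence $\sum_k |\partial_{s_k}\dkQ|^2 \le 1$. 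Combining this with the uniform ellipticity bound \eqref{:34b} and $|\theta'|^2 \le 2$ yields $\DDDa[\chiqQ,\chiqQ] \le \cref{;34}$, and the vanishing of $\DDDa[\chiqQ,\chiqQ]$ off $\LL[\ak]$ is automatic because $\chiqQ$ is constant on each of $\KaQk$ and $\sSS \setminus \KQ[\ak^+]$. The principal obstacle will be making the smoothness argument in part (1) fully rigorous: the distance-ordered labeling is only piecewise smooth as particles cross the integer spheres or swap relative radii, so achieving $C^\infty$ requires the flatness of $\theta$ near $0$ and $1$ to precisely match the order of the kinks generated by the $((\rr - |x_j|)^+)^2$ terms.
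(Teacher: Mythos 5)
Your route is the same as the paper's, which disposes of part (1) and \eqref{:52b} with the single phrase ``a direct calculation shows'' and only displays the carr\'e du champ estimate. Your treatment of \eqref{:52c} is correct and in fact supplies the one point the paper leaves implicit: on the support of $\theta'(\dkQ)$ one has $\dkQ \le 1-\epsilon <1$, so each summand $(\rR -|\labi (\sss )|)^2$ is $<1$, and since $\rR $ is an integer with $|\labi |<\rR $ at most one radius contributes to $\partial_{s_k}\dkQ $; this is exactly what legitimises the paper's passage from $\bigl(\sum_{\rR }(\rR -|s_k|)\bigr)^2$ to $\sum_{\rR }(\rR -|s_k|)^2$ in its displayed chain of inequalities. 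Your argument that $\chiqQ =0$ on $\KaQk $ (the modulus\mbox{-}ordered label forces $\Jrs (\ak )=\emptyset$ for every $\rR \le \qQ $) is also fine.

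The gap is in the boundary case of the ``$=1$'' claim. When the only radius witnessing $\sss \notin \KQ [\ak ^+]$ is $\rR _0=\qQ <\infty$, you switch to the summand $\rr =\qQ $; but that summand contributes $(\qQ -|\labi (\sss )|)^2$, not $(\rR _0+1-|\labi (\sss )|)^2$, and these terms can be arbitrarily small. Concretely, if $\sss (\SR )\le \ak (\rR )$ for all $\rR <\qQ $ while the $\ak (\qQ +1)+2$ particles of $\sS _{\qQ }$ all have moduli in $(\qQ -\delta ,\qQ )$, then $\sss \notin \KQ [\ak ^+]$ yet $\dkQ (\sss )=O(\delta )<\epsilon $, so $\chiqQ (\sss )=0$. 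Thus your claimed bound $\dkQ ^2>2$ fails there, and no repair is possible: the finite\mbox{-}$\qQ $ version of the second line of \eqref{:52b} (and with it the vanishing of $\DDDa [\chiqQ ,\chiqQ ]$ outside $\LL [\ak ]$) is genuinely false in this configuration, a defect of the statement that the paper's ``direct calculation'' silently passes over. Your argument is complete for $\qQ =\infty$ and for any violating radius $\rR _0<\qQ $, which is what the later limits $\qQ \to \infty$ actually use, but as a proof of the lemma as stated the boundary case is not closed. A secondary caveat: in part (1) the claim that the flatness of $\theta $ absorbs the kinks ``to all orders'' is overstated — the second derivative of $((\rr -|x_j|)^+)^2$ jumps when a particle sits on a sphere $|x|=\rr $, and at such a configuration $\dkQ $ may well lie in the region where $\theta '\neq 0$, so the composition is only $C^{1,1}$ there; this affects the paper's assertion as much as yours, but it is not resolved by the mechanism you describe.
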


\begin{proof}  
A direct calculation shows 
$ \chiqQ \in C^{\infty} (\sSS )$, \eqref{:52b}, and the equality in \eqref{:52c}. 

Clearly, $ 0 \le \DDDa [\chiqQ ,\chiqQ ] (\sss )$. 
A straightforward calculation shows that by \eqref{:34b} 
\begin{align} \notag 
\DDDa [\chiqQ ,\chiqQ ] (\sss ) \le 
& \frac{\cref{;34}}{2}
\Big\{ \frac{\theta' (\dkQ (\sss ))}
{\dkQ (\sss )} \Big\}^2 
 \sum_{\rR =1}^{\qQ } \sum_{i\in \Jrs (\ak ) } ( \rR -|\labi (\sss )|)^2
\\ \notag = & 
\frac{\cref{;34}}{2}
\theta' (\dkQ (\sss ))^2 \le \cref{;34} 
.\end{align}
Hence, we see that $ \chiqQ $ satisfies the inequalities in \eqref{:52c}. 
\end{proof}

For $ \NN \in \N $ and $ \qQ \in \N \cup \{ \infty \} $, we set 
\begin{align}\label{:53x}&
\chiwtN = \sum_{\qq =1}^{\NN } \chiqQ 
.\end{align}
We regard $ \chiwtN $ as a {\em coordinate} of $ \sss $ from the viewpoint of $ \KQ [\ak ] $. 
\begin{lemma} \label{l:53} 
\thetag{1} If $ \NN $ and $ \qQ \in \N $, then $ \chiwtN $ is bounded and continuous on $ \sSS $. 
\\\thetag{2} 
If $ \NN \in \N $ and $ \qQ \in \N \cup \{ \infty \} $, then $ \chiwtN \in C^{\infty} (\sSS ) $. 
Furthermore, the following hold. 
\begin{align}\label{:53a}&
\begin{cases}
\qq -1 \le \chiwtN (\sss ) \le \qq &\text{ for } \sss \in \KQ [\ak ^+] \backslash \KQ [\ak ] , \ \qq \le \NN 
,\\
\chiwtN (\sss ) = \qq &\text{ for } \sss \in \KQ [\akk ] \backslash \KQ [\ak ^+] 
, \ \qq \le \NN 
,\\
\chiwtN (\sss ) = \NN &\text{ for } \sss \in \KQ [a _{\NN +1} ]^c 
,\end{cases}
\\\label{:53b}&
 \DDDa [\chiwtN ,\chiwtN ] (\sss )
\begin{cases}\le \cref{;34} 
&\text{ for } \sss \in \KQ [a _{\NN +1} ] 
,\\=0
&\text{ for } \sss \in \KQ [a _{\NN +1} ]^c 
.\end{cases}
\end{align}
\end{lemma}
\begin{proof}  
\thetag{1} is clear by \eqref{:52f}, \eqref{:52g}, and \eqref{:53x}. 
\eqref{:53a} follows from \eqref{:52m}, \eqref{:52b}, and \eqref{:53x}. 
The equality in \eqref{:53b} follows from \eqref{:53a}. 

We finally prove the inequality in \eqref{:53b}. By \eqref{:53x} 
\begin{align}\label{:53d}&
\DDDa [\chiwtN ,\chiwtN ] (\sss ) 
= \DDDa [\sum_{\qq =1}^{\NN } \chiqQ ,\sum_{\qq =1}^{\NN } \chiqQ ] 
= \sum_{\qq ,\rr = 1}^{\NN } \DDDa [ \chiqQ , \chirQ ] 
.\end{align}
From the Schwarz inequality, \eqref{:52q}, and \eqref{:52c}, we have for $ \qq \not= \rr $ 
\begin{align}\label{:53e}&
 \DDDa [\chiqQ , \chirQ ] ^2 \le 
\DDDa [\chiqQ ,\chiqQ ] \DDDa [\chirQ , \chirQ ] 
= 0 
.\end{align}
From \eqref{:52q} and \eqref{:52c}, we see for $ \sss \in \KaQ $ 
\begin{align}
\label{:53f}&
\sum_{\qq =1}^{\NN } \DDDa [ \chiqQ , \chiqQ ] 
= \sum_{\qq =1}^{\NN } 1_{\LL [\ak ]} \DDDa [ \chiqQ , \chiqQ ] 
\le \cref{;34}
.\end{align}
From \eqref{:53d}, \eqref{:53e}, and \eqref{:53f} we obtain the inequality in \eqref{:53b}. 
\end{proof} 
Let $ \chic ^{\NN }(\mathbf{s}) = \chic ^{\NN }((s_1,s_2,\ldots))$ be the symmetric function on $ \SN $ such that $ \chic ^{\NN }(\mathbf{s}) = \chiwtN (\ulab (\mathbf{s}))$. 
Recall that $ \mathbf{X}_t = (X_t^i)_{i\in\N }$ and 
 $ \ulab (\mathbf{X}_t)= \sum_{i=1}^{\infty} \delta_{X_t^i}= \XX _t $. 
Hence, we have 
\begin{align}& \label{:54x}
 \chic ^{\NN }(\mathbf{X} _t) = \chiwtN (\ulab (\mathbf{X} _t))=\chiwtN (\XX _t)
.\end{align}
We regard $ \chic ^{\NN }$ as a smooth function on $ \SN \cap \{ \chic ^{\NN }< \infty \} $. 
Let $ \partial _i =(\partial _{i,\kK })\kD $ and set 
\begin{align}\label{:54y}&
 \chiwtp (x, \mathfrak{y})= (\partial_{1,\kK } \chic ^{\NN }) (x,\mathbf{y} ) ,\quad 
 \chiwtpq (x, \mathfrak{y})= (\partial_{1,\kK }\partial_{1,\lL } \chic ^{\NN }) (x,\mathbf{y}) 
.\end{align}
Here $ x = (x_1,\ldots,x_d) \in \Rd $ and $ \mathfrak{y} = \ulab (\mathbf{y})$. 

Assume $  \qQ < \infty $. Then, $ \chiwtN $ is a local function. 
For $ j \in \N $ we set 
\begin{align}& \label{:54A}
\1 = 
\sum_{i=1}^{j}
 \int_0^t 
\sumklD 
 \chiwtp (X_u^i,\XX _u^{\idia }) \sigmakl (X_u^i,\XX _u^{\idia }) dB_u^{\il }
.\end{align}
By \eqref{:34a}, \eqref{:52a}, \eqref{:53a}, and \eqref{:53b}, $ \2 $ is a continuous $ L^2$-martingale and 
\begin{align}\label{:54j}
& 
\langle \2 \rangle_t = \sum_{i=1}^{j}
\langle \int_0^t
\sumklD 
 \chiwtp (X_u^i,\XX _u^{\idia }) \sigmakl (X_u^i,\XX _u^{\idia }) dB_u^{\il }
 \rangle_t \le 2 \cref{;34} t 
.\end{align}
Hence $ \{ \2 \}_{j\in\N} $ is a Cauchy sequence in the space of continuous $ L^2$-martingales 
and converges to the continuous $ L^2$-martingale $ \4 $. We easily see from \eqref{:54j} that 
\begin{align} \label{:54C}
 \langle \4 \rangle_t &= 
 \sum_{i=1}^{\infty}
\langle \int_0^t
\sumklD 
 \chiwtp (X_u^i,\XX _u^{\idia }) \sigmakl (X_u^i,\XX _u^{\idia }) dB_u^{\il }
 \rangle_t 
\\ \notag & = 
\langle 
 \sum_{i=1}^{\infty}
\int_0^t
\sumklD 
 \chiwtp (X_u^i,\XX _u^{\idia }) \sigmakl (X_u^i,\XX _u^{\idia }) dB_u^{\il }
 \rangle_t 
.\end{align}
From \eqref{:54A}--\eqref{:54C}, we obtain 
\begin{align} \label{:54b}&
E[ | \3 |^2 ] = E[ \langle \4 \rangle_t ] \le 2\cref{;34} t 
.\end{align}

Applying It$\hat{\mathrm{o}}$'s formula to $ \mathbf{X} $ and $ \chic ^{\NN } $ 
together with \eqref{:54x} and \eqref{:54y}, we deduce that 
$ \chiwt ^{\NN } (\XX _t) $ is a continuous semi-martingale such that 
\begin{align} \label{:54Z} 
\chiwt ^{\NN } (\XX _t) = \chiwt ^{\NN } (\XX _0) & +
\0 
\sumklD 
 \chiwtp (X_u^i,\XX _u^{\idia }) \sigmakl (X_u^i,\XX _u^{\idia }) dB_u^{\il } 
\\ \notag &
+
\int_0^t \sum_{i=1}^{\infty} \sumkD 
b_{\kK } (X_u^i,\XX _u^{\idia }) 
\chiwtp (X_u^i,\XX _u^{\idia }) 
du
\\ \notag &
+ \half 
\int_0^t \sum_{i=1}^{\infty} \sumklD 
 \akl (X_u^i,\XX _u^{\idia }) 
\chiwtpq (X_u^i,\XX _u^{\idia }) du 
.\end{align}
Here, 
$ \sigma = (\sigmakl )\klD $, $ B^i=(B^{i,\kK })\kD $, 
$ b = (b_{\kK } )\kD $, and $ a = (\akl )\klD $.

By construction, for each $ \sss $, $ \chiwtN (\sss )$ is increasing 
in $ \qQ $ for each $ \NN \in \N \cup \{ \infty \} $, and 
in $ \NN $ for each $ \qQ \in \N \cup \{ \infty \} $. 
Hence we set 
\begin{align}& \label{:54p}
 \chiwtI (\sss ) := \limi{\NN } \limi{\qQ }\chiwtN (\sss )
.\end{align}
Then we have 
\begin{align}\label{:54z1}&
\chiwtI (\XX _t) = \limi{\NN } \limi{\qQ } \chiwtN (\XX _t) 
.\end{align}
From \eqref{:53x} and \lref{l:53}, 
we see $ \chiwtI (\sss ) < \infty $ if and only if 
$ \sss \in \Ka $. 
Hence $ \chiwtI (\XX _t) < \infty $ if and only if $ \XX _t \in \Ka $. 
So our task is to prove $ \chiwtI (\XX _t) < \infty $ for all $ t $ a.s. 

\begin{lemma} \label{l:54y} 
Assume \eqref{:52m}. Assume that 
\begin{align}\label{:55h}&
\sum_{\qq =1}^{\infty} \qq ^2 \pP ( \XX _0 \in \K [\ak ]^c ) < \infty 
.\end{align}
Then $ \chiwtI (\XX _0) < \infty $ a.s.\,and 
\begin{align}\label{:55k}&
E [ \chiwtI (\XX _0)^2 ] < \infty 
.\end{align}
\end{lemma}
\begin{proof}  
From \eqref{:55h}, we see $ \pP (\XX _0 \in \cap_{\qq =1}^{\infty} \{ \K [\ak ]^c \} ) = 0 $. 
Then $ \pP ( \XX _0 \in \Ka ^c ) = 0 $. 
Combining this with \eqref{:53a}, \eqref{:54p}, and \eqref{:55h}, we obtain 
\begin{align}\notag 
 E [\chiwtI (\XX _0 ) ^2 ] &= 
 E [ 1_{\Ka }(\XX _0 ) \big| \chiwtI (\XX _0 ) \big|^2 ] = 
\limi{\NN }\limi{\qQ } 
E [ 1_{\Ka }(\XX _0 ) \big| \chiwtN (\XX _0 ) \big|^2 ] 
\\ \notag &
\le 
 \sum_{\qq =1}^{\infty} \qq ^2 \, \pP ( \XX _0 \in \K [\akk ] \backslash \K [\ak ] )
 \le 
 \sum_{\qq =1}^{\infty} \qq ^2 \, \pP (\XX _0 \in \K [\ak ] ^c ) 
< \infty 
.\end{align}
This yields \eqref{:55k}. The first claim is clear from \eqref{:55k}. 
\end{proof}

\begin{lemma} \label{l:54z} 
Assume \eqref{:52m}. 
Assume $ \chiwtI (\XX _0) < \infty $. Assume that for each $ t $ 
\begin{align}\label{:54z2}
&
\limi{\NN }\limi{\qQ }
\int_0^t 
1_{ \Ka } (\XX _u ) 
\sum_{i=1}^{\infty} \sumkD 
b_{\kK } (X_u^i,\XX _u^{\idia }) 
\chiwtp (X_u^i,\XX _u^{\idia }) 
du
\\ \notag & \quad \quad \quad = 
\int_0^t 
1_{ \Ka } (\XX _u )
\sum_{i=1}^{\infty} \sumkD 
b_{\kK } (X_u^i,\XX _u^{\idia }) 
\chiwtIp (X_u^i,\XX _u^{\idia }) 
du 
\quad \text{ a.s.}
,\\ \notag &
\limi{\NN }\limi{\qQ } 
\half 
\int_0^t 
1_{ \Ka } (\XX _u )
\sum_{i=1}^{\infty} \sumklD 
 \akl (X_u^i,\XX _u^{\idia }) 
\chiwtpq (X_u^i,\XX _u^{\idia }) du 
\\ \notag &
\quad \quad \quad = 
\half 
\int_0^t 
1_{ \Ka } (\XX _u )
\sum_{i=1}^{\infty} \sumklD 
 \akl (X_u^i,\XX _u^{\idia }) 
\chiwtIpq (X_u^i,\XX _u^{\idia }) du 
\quad \text{ a.s.}
,\end{align}
and that the right-hand sides of the equations in \eqref{:54z2} are continuous processes and finite for all t. 
Then $ \chiwtI (\XX _t) $ is finite for all $ t $ and a continuous semi-martingale such that 
\begin{align} \label{:54z} 
\chiwtI (\XX _t) = \chiwtI (\XX _0) & 
+ 
\0
1_{ \Ka } (\XX _u )
\sumklD 
 \chiwtIp (X_u^i,\XX _u^{\idia }) \sigmakl 
(X_u^i,\XX _u^{\idia }) dB_u^{\il } 
\\ \notag &
+
\int_0^t 
1_{ \Ka } (\XX _u )
 \sum_{i=1}^{\infty} \sumkD 
b_{\kK } (X_u^i,\XX _u^{\idia }) 
\chiwtIp (X_u^i,\XX _u^{\idia }) 
du
\\ \notag &
+ \half 
\int_0^t 
1_{ \Ka } (\XX _u )
\sum_{i=1}^{\infty} \sumklD 
 \akl (X_u^i,\XX _u^{\idia }) 
\chiwtIpq (X_u^i,\XX _u^{\idia }) du
.\end{align}
\end{lemma}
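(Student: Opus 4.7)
The plan is to pass to the limit in the It\^o formula \eqref{:54Z} for $\chiwtN$ as $\qQ \to \infty$ and then $\NN \to \infty$. For each finite pair $\qQ, \NN \in \N$, the function $\chiwtN$ is a smooth local function on $\sSS$ by \lref{l:53}, so \eqref{:54Z} is a direct application of It\^o's formula to the semi-martingales $X^i$. The aim is to check that each of the four terms on the right-hand side of \eqref{:54Z}, together with the left-hand side, converges to the corresponding term in \eqref{:54z}.

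By \eqref{:54z1} and the monotonicity of $\chiwtN$ in both $\qQ$ and $\NN$, the left-hand side $\chiwtN(\XX_t)$ converges almost surely to $\chiwtI(\XX_t)$, and similarly $\chiwtN(\XX_0) \to \chiwtI(\XX_0)$, which is finite by hypothesis. The drift and second-order integrals converge to the expressions involving $\chiwtIp$ and $\chiwtIpq$ weighted by $1_{\Ka}(\XX_u)$ directly from the assumption \eqref{:54z2}.

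The central step is the convergence of the martingale term $\3$. The $L^2$ estimate \eqref{:54b} gives $E[|\3|^2] \le 2\cref{;34} t$ uniformly in $\NN$ and $\qQ$, so the family is bounded in $L^2$. Since the three other terms on the right-hand side of \eqref{:54Z} and the left-hand side all converge almost surely in the double limit, the martingale part $\3$ must also converge almost surely to their difference. To identify this limit with the stochastic integral on the right-hand side of \eqref{:54z}, I would exploit pointwise stabilization: by \lref{l:53} and the inclusions $\K [\ak ] \subset \K [\akk ]$, for each $\sss \in \Ka$ we have $\chiqQ(\sss) = 0$ for all $\qq$ sufficiently large, so both $\chiwtN(\sss)$ and its partial derivatives $\chiwtp(\sss)$ stabilize pointwise on $\Ka$ and converge to $\chiwtI(\sss)$ and $\chiwtIp(\sss)$, respectively. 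Combined with the orthogonality \eqref{:53e} of the cut-offs $\chiqQ$ for distinct $\qq$ and the domination $\DDDa[\chiqQ, \chiqQ] \le \cref{;34}$ from \eqref{:52c}, a standard argument with the It\^o isometry then upgrades the pointwise stabilization to $L^2$ convergence of the stochastic integrals.

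Once the limit identity has been established, the finiteness of each summand on the right-hand side of \eqref{:54z} shows that $\chiwtI(\XX_t)$ is almost surely finite for every $t$, and continuity of the left-hand side follows from continuity of each summand. The main obstacle will be rigorously promoting the $L^2$ bound in \eqref{:54b} to identification of the martingale limit as the specific stochastic integral in \eqref{:54z}. The weak $L^2$-compactness from \eqref{:54b} alone only yields subsequential limits; establishing the full $L^2$ convergence and matching the limit with the stated integrand $1_{\Ka} \chiwtIp$ requires the pointwise stabilization on $\Ka$ together with the orthogonality structure of the family $\{\chiqQ\}$ indexed by $\qq$.
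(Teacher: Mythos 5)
Your proposal is correct and follows essentially the same route as the paper: pass to the limit in It\^o's formula \eqref{:54Z}, use the hypothesis \eqref{:54z2} for the drift and second-order terms, and identify the limit of the martingale term via the It\^o isometry using the pointwise convergence of the carr\'e du champ together with the uniform bound $\DDDa[\chiwtN,\chiwtN]\le\cref{;34}$ from \eqref{:53b} (the paper phrases this as $1_{\Ka}\DDDa[\chiwtI-\chiwtN,\chiwtI-\chiwtN]\to 0$ pointwise with domination, giving convergence in the space of continuous $L^2$-martingales). The only small imprecision is that the convergence in $\qQ$ is a monotone limit rather than a genuine stabilization (only the $\NN$-sum stabilizes on $\Ka$), and the insertion of the factor $1_{\Ka}(\XX_u)$ into the stochastic integral rests on the vanishing of $\chiwtp$ off $\Ka$ as in \eqref{:54B}; neither affects the validity of the argument.
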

\begin{proof} 
By \eqref{:52f}, \eqref{:52g}, and \eqref{:54z1}, we easily see 
$ \DDDa [\chiwtN ,\chiwtN ] (\sss )$ are increasing in $ \qQ $ for each $ \NN $ and also 
$ \DDDa [\chiwtI _{\infty }^{\NN } ,\chiwtI _{\infty }^{\NN } ] (\sss )$ are increasing in $ \NN $. 
Furthermore, 
\begin{align}\label{:54bb}
1_{ \Ka } (\sss )
\DDDa [\chiwtI ,\chiwtI ] (\sss ) 
&= 
1_{ \Ka } (\sss )
\limi{\NN } \limi{\qQ } \DDDa [\chiwtN ,\chiwtN ] (\sss ) 
\\\notag 
&= 
\limi{\NN } \limi{\qQ } \DDDa [\chiwtN ,\chiwtN ] (\sss ) 
\le \cref{;34}
\end{align}
and $ \limi{\NN } \limi{\qQ } 1_{\Ka } (\sss ) \DDDa [ \chiwtI - \chiwtN ,\chiwtI - \chiwtN ] (\sss ) = 0 $ 
for each $ \sss $. 

From \eqref{:52a} and \eqref{:54bb}, we deduce that 
the second term of the right-hand side of \eqref{:54z} is a continuous $ L^2$-martingale and 
is the limit of the third term of \eqref{:54Z} in the space of the continuous $ L^2$-martingales 
on $ \OFPF $. 

By \eqref{:52f}, \eqref{:52g}, \eqref{:52b}, and \eqref{:53x}, 
we see for $ (x , \sss ) \in \Rd \ts \sSS $ such that $ \delta_x + \sss \in \Ka ^c $ 
\begin{align}\label{:54B}&
\chiwtp (x , \sss ) = \chiwtpq (x , \sss ) = 0 
.\end{align}

Take $ \qQ \to \infty $ and then $ \NN \to \infty $ in \eqref{:54Z}. 
Then we obtain \eqref{:54z} from \eqref{:54b}, \eqref{:54z1}, \eqref{:54z2}, \eqref{:54bb}, and \eqref{:54B}. 
Each term of the right-hand side of \eqref{:54z} is finite and continuous in $ t $ by assumption and 
the argument as above. 
Hence, $ \chiwtI (\XX _t) < \infty $ for all $ t $ and $ \chiwtI (\XX _t) $ is a continuous semi-martingale 
satisfying \eqref{:54z}. 
\end{proof} 
\begin{lemma} \label{l:54} 
Let $ \kappaq $ be the exit time of $ \XX $ from $ \K [\ak ] $. 
For each $ t \ge 0 $,
\begin{align}\label{:54a}&
\sup_{\qq \in \N } 
E[\Big| 
 \sum_{i=1}^{\infty} 
\int_0^{t\wedge \kappaq }
\sumklD 
\chiwtIp (X_u^i,\XX _u^{\idia }) \sigmakl 
(X_u^i,\XX _u^{\idia }) dB_u^{\il } \Big|^2 ] < \infty 
.\end{align}
\end{lemma}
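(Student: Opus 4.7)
The plan is to apply It\^o's isometry to the stopped martingale in \eqref{:54a}, identify its quadratic variation with twice the time-integral of the carr\'e du champ $\DDDa [\chiwtI ,\chiwtI ](\XX _u )$, and then conclude via the uniform bound on this quantity over $\Ka $ established in \eqref{:54bb}.

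First, on the event $\{ u \le \kappaq \} $ the configuration $\XX _u$ lies in $\K [\ak ] \subset \Ka $. By \eqref{:52b} together with the inclusions \eqref{:52m}, each $\chi _{\qq ',\qQ }$ with $\qq ' > \qq $ vanishes on $\K [\ak ]$; hence $\chiwtI $ collapses there to the finite sum $\sum _{\qq '=1}^{\qq } \chi _{\qq ',\qQ }$ in the limit $\qQ \to \infty $, so $\chiwtIp (X_u^i,\XX _u^{\idia })$ is a well-defined bounded quantity and the stochastic integral appearing in \eqref{:54a} is meaningful.

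Paralleling the construction in \eqref{:54A}--\eqref{:54C}, I would introduce the partial sum martingales
\begin{align*}
M_t^{(j)} := \sum _{i=1}^{j} \int_0^{t\wedge \kappaq } \sumklD \chiwtIp (X_u^i,\XX _u^{\idia }) \sigmakl (X_u^i,\XX _u^{\idia }) dB_u^{\il }.
\end{align*}
Because $\{ B^i \} _{i\in \N }$ are independent and $\sigma \, {}^t\sigma = a$ by \eqref{:34a},
\begin{align*}
\langle M^{(j)} \rangle _t = \int_0^{t\wedge \kappaq } \sum _{i=1}^{j} \sumklD \akl (X_u^i,\XX _u^{\idia }) \chiwtIp (X_u^i,\XX _u^{\idia }) \chiwtIq (X_u^i,\XX _u^{\idia }) du .
\end{align*}
Letting $j \to \infty $ by monotone convergence and recognising the integrand via the definition \eqref{:52a} of the carr\'e du champ, we obtain $\langle M^{(\infty )} \rangle _t = 2 \int_0^{t\wedge \kappaq } \DDDa [\chiwtI ,\chiwtI ](\XX _u ) du$.

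Since $\XX _u \in \Ka $ throughout $[0, t \wedge \kappaq ]$, the bound \eqref{:54bb} yields $\DDDa [\chiwtI ,\chiwtI ](\XX _u ) \le \cref{;34}$ a.s., so $\langle M^{(\infty )} \rangle _t \le 2\cref{;34}\, t$. It\^o's isometry then gives $E [|M^{(\infty )}_t |^2 ] \le 2\cref{;34}\, t$, a bound uniform in $\qq $, which is \eqref{:54a}. The main technical point is justifying that the infinite sum in $i$ assembles into a genuine continuous $L^2$-martingale $M^{(\infty )}$; this follows from $\{ M^{(j)} \}_j$ being Cauchy in the space of continuous $L^2$-martingales on $\OFPF $, which in turn is an immediate consequence of the same uniform bound on the carr\'e du champ applied to the tails $\sum _{i>j}$.
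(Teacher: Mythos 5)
Your proposal is correct and follows essentially the same route as the paper: the paper's one-line proof ("we deduce \eqref{:54a} from \eqref{:54b} easily") rests on exactly the construction you reproduce, namely the partial-sum martingales of \eqref{:54A}--\eqref{:54C}, the identification of the quadratic variation with $2\int_0^{\cdot}\DDDa[\chiwtI,\chiwtI](\XX_u)\,du$, and the uniform bound $\le \cref{;34}$ on $\Ka$ from \eqref{:54bb}, which gives $2\cref{;34}t$ independently of $\qq$.
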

\begin{proof} 
We deduce \eqref{:54a} from \eqref{:54b} easily. 
\end{proof}

\begin{proposition} \label{l:55}
Assume \eqref{:52m}, \eqref{:55k}, and \eqref{:54z2}. Assume that 
\begin{align}\label{:55i}&
\sup_{\qq \in \N } \Big| E [ \int_0^{t\wedge \kappaq } 
\sum_{i=1}^{\infty} \sumkD 
b_{\kK } (X_u^i,\XX _u^{\idia }) 
\chiwtIp (X_u^i,\XX _u^{\idia }) du 
] \Big| < \infty 
,\\ & \notag 
\sup_{\qq \in \N } \Big| E [ \int_0^{t\wedge \kappaq } 
\sum_{i=1}^{\infty} \sumklD 
 \akl (X_u^i,\XX _u^{\idia }) 
\chiwtIpq (X_u^i,\XX _u^{\idia }) du
] \Big| < \infty 
.\end{align}
Then, we obtain \eqref{:55z}. 
\end{proposition}
\begin{proof}  
Note that $ \chiwtN $ are non-negative and continuous for all $ \NN , \qQ \in \N $. 
Then, by \eqref{:54z1}, the monotone convergence theorem (MCT), and Fatou's lemma, 
we obtain for each $ t $ 
\begin{align}
\label{:55a}
E [\chiwtI (\XX _{t\wedge \kappai })] & = 
\limi{\NN }
\limi{\qQ }
E [\chiwtN (\XX _{t\wedge \kappai })] 
&&\text{ by MCT}
\\ \notag &
\le 
\limi{\NN }
\limi{\qQ }
\liminfi{\qq } 
E[\chiwtN (\XX _{t\wedge \kappaq }) ] 
&&\text{ by Fatou's lemma}
\\ \notag &
\le 
\limi{\NN }
\limi{\qQ }
\liminfi{\qq } 
E[\chiwtI (\XX _{t\wedge \kappaq }) ] 
&&\text{ by $ \chiwtN \le \chiwtI $}
\\ \notag &
=
\liminfi{\qq } E[\chiwtI (\XX _{t\wedge \kappaq }) ] 
.\end{align}

By \eqref{:55k}, $ \chiwtI (\XX _0) < \infty $ a.s. 
By assumption, \eqref{:52m} and \eqref{:54z2} hold. 
Then the assumptions of \lref{l:54z} are fulfilled. 
Hence we obtain \eqref{:54z}. From \eqref{:54z} we see 
\begin{align} \label{:55b} 
\chiwtI (\XX _{t\wedge \kappaq } ) = \chiwtI (\XX _0) & + 
 \sum_{i=1}^{\infty} \int_0^{t\wedge \kappaq }
\sumklD 
 \chiwtIp (X_u^i,\XX _u^{\idia }) \sigmakl 
(X_u^i,\XX _u^{\idia }) dB_u^{\il } 
\\ \notag &
+
\int_0^{t\wedge \kappaq } \sum_{i=1}^{\infty} \sumkD 
b_{\kK } (X_u^i,\XX _u^{\idia }) 
\chiwtIp (X_u^i,\XX _u^{\idia }) 
du
\\ \notag &
+ \half 
\int_0^{t\wedge \kappaq } \sum_{i=1}^{\infty} \sumklD 
 \akl (X_u^i,\XX _u^{\idia }) 
\chiwtIpq (X_u^i,\XX _u^{\idia }) du
.\end{align}
Taking the expectation for each term in \eqref{:55b} and applying \eqref{:55k}, \eqref{:54a}, and \eqref{:55i} 
to the right-hand side of \eqref{:55b}, we deduce 
\begin{align}\label{:55c}&
\sup_{\qq \in \N } E [ \chiwtI (\XX _{t\wedge \kappaq } ) ] < \infty \quad \text{ for each $ t $}
.\end{align}

By \eqref{:55a} and \eqref{:55c},  $ E [ \chiwtI (\XX _{t\wedge \kappai }) ] < \infty 
 $  for each $ t $. Hence $ \chiwtI (\XX _{t\wedge \kappai }) < \infty $ a.s.\,for each $ t $. 
By  \lref{l:54z}, $ \{ \chiwtI (\XX _t ) \} $ is a continuous process on $ [0,\infty )$. 
From \eqref{:53x}, $ \chiwtI (\sss ) = \infty $ for $ \sss \notin \Ka $. 
Hence, if $ \kappai < \infty $, then 
$ \chiwtI (\XX _{ \kappai })  =  \lim_{t\downarrow 0 }\chiwtI (\XX _{ \kappai + t }) = \infty $ a.s.
Combining these yields 
$ \pP (\kappai \le t ) = 0 $ for each $ 0 \le t < \infty $. 
We therefore obtain $ \pP (\kappai < \infty ) = 0 $, which implies \eqref{:55z}. 
\end{proof}

\subsection{Sufficient condition for \Ass{B1}. Theorems \ref{l:58} and \ref{l:5X}} \label{s:53}

We now present a sufficient condition of \Ass{B1} for non-symmetric stochastic dynamics. 
We shall apply \tref{l:5X} to \eref{e:78}. 

\begin{theorem} \label{l:58}
Assume that \Ass{UB}, \Ass{C3}--\Ass{C5}, \eqref{:52m}, \eqref{:55k}, \eqref{:54z2}, and \eqref{:55i} hold. 
Then $ \XB $ satisfies \Ass{B1} for each $ m \in \N $. 
\end{theorem}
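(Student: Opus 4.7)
The plan is to deduce \Ass{B1} by combining Propositions \ref{l:51} and \ref{l:55}, whose hypotheses match precisely the assumptions of \tref{l:58}: \Ass{UB} and \Ass{C3}--\Ass{C5} are exactly what is needed for \pref{l:51}, while \eqref{:52m}, \eqref{:55k}, \eqref{:54z2}, and \eqref{:55i} are exactly what is needed for \pref{l:55}. Unwinding the definition of \Ass{B1}, it suffices to show that for $P$-a.s.\,$\omega$ and every $T>0$, there exists $\nn=(\pp,\qq,\rr)\in\NNNthree$ such that $(\mathbf{X}^m_t,\XX^{m*}_t)\in\HanC$ for all $t\in[0,T]$; taking $T\to\infty$ along the nested indices then forces $\limi{\nn}\varsigma_{\nn}=\infty$.

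First I would apply \pref{l:51} to obtain $P(\XX_t\in\Ss\text{ for all }t\ge 0)=1$; combined with $\XX_t\in\SSsde\subset\Si$, which is built into the definition of a weak solution, this gives $\XX_t\in\Ssi$ throughout, and hence $(\mathbf{X}^m_t,\XX^{m*}_t)\in\Ssi^{[m]}$. Next I would apply \pref{l:55} to obtain $P(\kappa_{\infty}=\infty)=1$, which, unwinding, says that for a.s.\,$\omega$ and each $T>0$ there exists $\qq=\qq(\omega,T)$ with $\XX_t\in\K[\ak]$ for every $t\in[0,T]$. Since $\XX^{m*}_t\le\XX_t$, this yields $\XX^{m*}_t\in\K[\ak]\subset\K[\akk]=\Kakk$ on $[0,T]$. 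The remaining task is to choose $\rr$ and $\pp$ so that $\mathbf{X}^m_t\in\RRprCs(\XX^{m*}_t)$ uniformly on $[0,T]$. The parameter $\rr$ is picked so that $\sup_{t\le T,\,l\le m}|X^l_t|<\rr$, which is possible because each tagged particle is a continuous $\sS$-valued process. Once $\rr$ is fixed, non-collision and continuity on the compact interval $[0,T]$ make $\delta_1:=\min_{j\ne k\le m}\inf_{t\le T}|X^j_t-X^k_t|>0$; together with the uniform separation between the first $m$ tagged particles and the remaining ones (discussed next), one can then take $\pp$ so that $2^{-\pp}$ is smaller than both quantities.

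The main obstacle will be the uniform separation $\delta_2:=\inf_{t\le T,\,l\le m,\,i>m}|X^l_t-X^i_t|>0$, as the infimum is taken over infinitely many indices. My plan to handle this is to argue by contradiction. Suppose $\delta_2=0$; pick a sequence $(t_n,l_n,i_n)$ with $i_n>m$ and $|X^{l_n}_{t_n}-X^{i_n}_{t_n}|\to 0$. By extracting subsequences we may assume $l_n\equiv l$ and $t_n\to t_*\in[0,T]$. The bound $\XX_t(\sS_{2\rr})\le\ak(2\rr)$ from \pref{l:55} confines $X^{i_n}_{t_n}$ to the compact set $\overline{\sS}_{2\rr}$, so a further subsequence yields $X^{i_n}_{t_n}\to y_*\in\overline{\sS}_{2\rr}$ with $y_*=X^l_{t_*}$ by continuity of $X^l$. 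Using vague continuity of $\XX^{m*}$ together with the Portmanteau characterization for point measures (choosing small balls around $y_*$ whose boundary sphere carries no mass of $\XX^{m*}_{t_*}$), the limit point $y_*$ must lie in the support of $\XX^{m*}_{t_*}$, that is $y_*=X^{i_*}_{t_*}$ for some $i_*>m$. Then $X^l_{t_*}=X^{i_*}_{t_*}$ contradicts the non-collision of Step 1; this will be the technically most delicate step and is the one where the combination of Propositions \ref{l:51} and \ref{l:55} is truly used.
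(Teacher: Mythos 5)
Your proposal follows the same route as the paper: \Ass{B1} for $m=0$ is exactly the conjunction of \pref{l:51} (non-collision, so $\XX_t\in\Ss$ and hence, with $\SSsde\subset\Si$, $\XX_t\in\Ssi$) and \pref{l:55} ($\kappa_{\infty}=\infty$, so $\XX$ stays in $\Ka$), and the case $m\ge 1$ is reduced to the case $m=0$. The paper's own proof of that reduction is a single sentence (``each tagged particle has the non-collision and non-explosion properties, then $\lpath$ is well defined and \Ass{B1} for $m\ge1$ follows from that for $m=0$''), whereas you spell out the choice of $r$, $q$ and $p$; that part of your write-up is more informative than the published argument, and your identification of the uniform separation $\inf_{t\le T,\,l\le m,\,i>m}|X_t^l-X_t^i|>0$ as the only non-trivial point is exactly right.

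The step I would push back on is your treatment of the case where the approaching indices $i_n$ tend to infinity. There you invoke ``vague continuity of $\XX^{m*}$'', i.e.\ that $t\mapsto\XX_t^{m*}$ is an element of $\WS$. This is precisely the property the paper warns is \emph{not} automatic for a labeled path (see the remark after \eqref{:22c}: $\upath(\www)$ need not belong to $\WS$ even when $\upath(\www)_t\in\sSS$ for every $t$). Ruling out a sequence of \emph{distinct} particles $X^{i_n}$ entering a fixed ball at times $t_n\to t_*$ while sitting far away at $t_*$ is essentially the content of \As{NBJ}, which is not among the hypotheses of \tref{l:58}; the tame bound $\XX_t(\overline{\sS}_{R})\le a_q(R)$ from \pref{l:55} controls the configuration at each fixed time but not the set of indices visiting $\overline{\sS}_{R}$ over the whole interval $[0,T]$. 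So, as written, your compactness argument only disposes of the case in which some index recurs infinitely often. To be fair, the paper's one-line reduction does not address this either; but to make your argument self-contained you would need to either assume \As{NBJ} (equivalently, vague continuity of the unlabeled path) or explain why it follows from the stated hypotheses.
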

\begin{proof} 
\Ass{B1} for $ m = 0$ follows immediately from \pref{l:51} and \pref{l:55}. 
Each tagged particle $ X^i $ has the non-collision and non explosion properties. 
Then $ \lpath $ is well defined and \Ass{B1} for each $ m \ge 1 $ follows from that for $ m = 0$. 
\end{proof} 
\begin{corollary} \label{l:59}
Assume that \Ass{UB}, \Ass{C3}--\Ass{C5}, \eqref{:52m}, \eqref{:55h}, \eqref{:54z2}, and \eqref{:55i} hold. 
Then $ \XB $ satisfies \Ass{B1} for each $ m \in \N $. 
\end{corollary}
\begin{proof} 
\corref{l:59} follows from \lref{l:54y} and \tref{l:58}. 
\end{proof} 

\begin{theorem}	\label{l:5X}
Assume that \Ass{UB}, \Ass{C3}--\Ass{C5}, \eqref{:52m}, and \eqref{:55h}. 
Furthermore, assume $ \la :\elaw \XX _0 $ is an invariant probability measure of $ \XX $ and 
\begin{align}\label{:5X1}&
\int_{\sSS } 
\sum_{i=1}^{\infty} \sumkD 
\Big| 
b_{\kK } (s^i,\sss ^{\idia }) \chiwtIp (s^i,\sss ^{\idia }) 
\Big| d\la
< \infty 
,\\ \notag &
\int_{\sSS } 
\sum_{i=1}^{\infty} \sumklD 
\Big| 
 \akl (s^i,\sss ^{\idia })
\chiwtIpq (s^i,\sss ^{\idia })
\Big| d\la 
< \infty 
.\end{align}
Then $ \XB $ satisfies \Ass{B1} for each $ m \in \N $. 
\end{theorem}

\begin{proof}  
For $ \sss \in \Ka $, as $ \qQ \to \infty $ and then $ \NN \to \infty $, 
we have by \eqref{:52m} and \lref{l:53} 
\begin{align}\label{:5X2}
\sum_{i=1}^{\infty} \sumkD 
\Big| 
b_{\kK } (s^i,\sss ^{\idia }) 
\chiwtp (s^i,\sss ^{\idia })
\Big| 
& 
\uparrow 
\sum_{i=1}^{\infty} \sumkD 
\Big| 
b_{\kK } (s^i,\sss ^{\idia })
\chiwtIp (s^i,\sss ^{\idia }) 
\Big| 
,\\ \notag 
\sum_{i=1}^{\infty} \sumklD 
\Big| 
 \akl (s^i,\sss ^{\idia })
\chiwtpq (s^i,\sss ^{\idia })
\Big| 
& 
\uparrow 
\sum_{i=1}^{\infty} \sumklD 
\Big| 
 \akl (s^i,\sss ^{\idia })
\chiwtIpq (s^i,\sss ^{\idia })
\Big| 
.\end{align}
Then we deduce \eqref{:54z2} and \eqref{:55i} from \eqref{:5X1}, \eqref{:5X2}, the monotone convergence theorem, and the assumption that $ \la $ is an invariant probability measure of $ \XX $. 
Hence we obtain \Ass{B1} from \corref{l:59}. 
\end{proof} 

We remark that \eqref{:5X1} can be rewritten as 
\begin{align}\label{:5X3}&
\int_{\sS \ts \sSS } 
 \sumkD 
\Big| 
b_{\kK } (x,\sss ) 
\chiwtIp (x,\sss ) 
\Big| d\la ^{[1]}
< \infty 
,\\ \notag &
\int_{\sS \ts \sSS } 
 \sumklD 
\Big| 
 \akl (x,\sss ) 
\chiwtIpq (x,\sss ) 
\Big| d\la ^{[1]}
< \infty 
.\end{align}

\section{A sufficient condition for \Ass{B1} in the symmetric case} \label{s:6}

Let $ \la $ be a random point field such that $ \la (\SSsde )=1 $ and let 
$ \map{\lab }{\Ssi }{\SN }$ be a label, as before. 
We shall consider the ISDE \eqref{:23a}--\eqref{:23b} with the initial distribution $ \la \circ \lab ^{-1}$. 

Let $\{ \QQs \}$ be a family of probability measures on $ \OFF $ such that $ \XB $ defined on $ \OFQFs $ 
 is a weak solution of \eqref{:23a}--\eqref{:23b} starting at 
$ \s = \lab (\sss )$ for $ \lambda $-a.s.\,$ \sss $. 
We assume $\{ \QQs \}$ is a measurable family in the following sense.

\smallskip 
\noindent 
\Ass{MF} 
$ \QQs (A)$ is 
$ \overline{\mathcal{B}(\sSS )}^{\la } $-measurable in $ \sss $ for each $ A \in \mathcal{F} $. 
\smallskip 

We remark that \Ass{MF} is a counterpart of \As{MF} in \sref{s:25}. 
Indeed, $ \lambda $ and $ \QQs $ correspond to 
$ \pP \circ \mathbf{X}_0^{-1} $ and $ \pP (\Fs ( \mathbf{B} ) \in \cdot ) $ with $ \mathbf{s} = \lab (\sss ) $, 
respectively.

For a family of probability measures $ \{ \QQs \} $ satisfying \Ass{MF}, we set 
\begin{align}&\notag %
 \QQla = \int _{\sSS } \QQs d\la 
.\end{align}
Then, $ \XB $ under $ \QQla $ is a solution of 
\eqref{:23a}--\eqref{:23b} with the initial distribution $ \la \circ \lab ^{-1}$. 

For $ m \in \zN $, we denote by $ \XM =(\mathbf{X}^m, \XX ^{m*}) $ 
 the $ m $-labeled process given by \eqref{:31a}, where $ \mathbf{X}^{[0]} = \XX $. 
Let $ \QQxsM $ be the distribution of $ \XM $ under 
$ \QQ _{\ulab (\mathbf{x}) + \sss } $. 
Then, 
\begin{align}\label{:34C}&
\int f (\XM _t) d \QQ _{\ulab (\mathbf{x}) + \sss }  = 
\int _{\WT (\sS ^m \ts \sSS ) }f (\www _t^{[m]}) d \QQxsM 
,\end{align}
where $ \www ^{[m]}  =( w ^1,\ldots, w ^m, \sum_{i=m+1}^{\infty} \delta_{ w ^i})$ as in \eqref{:22q}. 

Let $ \la ^{[0]} = \la $. Let $ \la ^{[m]}$ be the $ m $-Campbell measure of $ \la $ for $ m \in \N $. 
We set
\begin{align}\label{:34c} & 
\QQlaM = \int _{\sS ^m \ts \sSS } \QQxsM d\la ^{[m]} \quad \text{  for $ m \in \zN $}
.\end{align}
By definition, $ \QQla ^{[0]} = \QQla \circ \XX ^{-1}$.

We set $ \mathbf{B}^m = (B^1,\ldots,B^m) $ for $ \mathbf{B}=(B^i)_{i\in\N }$. 
We make assumptions. 

\smallskip 

\noindent 
\Ass{BX} 
$ \sigma [\mathbf{B}_s^{m} ; s\le t] \subset \sigma [\mathbf{X} _s^{[m]} ;s\le t ]$ for all $ t $ under $ \QQla $ 
for each $  m \in \N $. 

\smallskip 
\noindent 
\Ass{S$_{\la }$} For each $ m \in \zN $, the $ m $-labeled process $ \XM $ under $ \OFQFm $ 
gives a symmetric, Markovian semi-group $ T_t^{[m]} $ on $ L^2(\Sm \ts \sSS ,\la ^{[m]})$ defined by 
\begin{align}
\label{:34d}&
T_t^{[m]} f ( \mathbf{x},\sss ) = \int _{\WT (\Sm \ts \sSS ) }f (\www _t^{[m]}) d \QQxsM 
.\end{align}
Furthermore, $ \la ^{[m]}$ is an invariant measure of $ T_t^{[m]} $.

\noindent \Ass{D} 
Let $ \rho _{\la }^2 $ be the two-point correlation function of $ \la $. Then, for each $ R \in \N $, 
\begin{align}&\notag 
\int_{\SR \ts \SR } \frac{1}{ \upsilon ( | x - y | )^{2}} 
\rho _{\la }^2 (x,y) dxdy < \infty 
.\end{align}
Here the function $  \upsilon $ is given at the beginning of \sref{s:5A}. 

\begin{theorem} \label{l:34}
Assume that \Ass{UB}, \Ass{MF}, \Ass{BX}, \Ass{S$_{\la }$}, and \Ass{D} hold. 
Assume $ \int_{\sSS } | \chiwtI |^2 d\la < \infty $. 
Then, $ \XB $ under $ \QQla $ satisfies \Ass{B1} for each $ m \in \{ 0 \}\cup \N $. 
\end{theorem}

\begin{remark}\label{r:33}
Under \Ass{S$_{\la }$}, a symmetric Dirichlet form associated with the solution $ \XB $ exists 
through the $ L^2$-symmetric semi-groups $ T_t^{[m]} $. 
However, the Dirichlet form is not necessarily quasi-regular. 
Hence, we can not apply the Dirichlet form technique, including the concept of capacity, directly to the solution. 
We use the fact that $ \XB $ is a solution of \eqref{:23a} and the existence of 
the associated $ L^2$-symmetric semi-groups instead. 
If the Dirichlet form associated with $ T_t^{[m]} $ is a lower Dirichlet form in the sense of \cite{o.udf} and if 
tagged particles explode, then we conjecture that the Dirichlet form is not quasi-regular. 
\end{remark}

\begin{proposition} \label{l:62} 
Under the same assumptions as for \tref{l:34}, \eqref{:51z} with $ \pP = \QQla $ holds. 
\end{proposition}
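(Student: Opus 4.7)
The plan is to verify the hypotheses \Ass{C3}--\Ass{C5} for $P = \QQla $ and then invoke \pref{l:51}, which already contains \Ass{UB}. By the indistinguishability of the particles it suffices to treat the pair $(i,j)=(1,2)$.

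The backbone of all the estimates is the invariance of the $2$-Campbell measure $\la^{[2]}$ under the $2$-labeled semigroup $T_t^{[2]}$ provided by \Ass{S$_{\la }$}. For any non-negative measurable $F$ on $\sS ^2 \ts \sSS $, stationarity and Fubini give
\begin{align*}
E_{\QQla }\Big[\int_0^t F(X_u^1, X_u^2, \XX _u^{2*})\, du\Big]
\;=\; t \int F\, d\la^{[2]} .
\end{align*}
Taking $F(x,y,\sss )=1_{\SR }(x)1_{\SR }(y)\upsilon (|x-y|)^{-2}$ and using $d\la^{[2]} = \rho^{2}_{\la }(x,y)\, dx\, dy\, d\la _{x,y}(\sss )$, \Ass{D} immediately verifies the first term of \Ass{C5}. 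Choosing $\upsilon $ so that $\upsilon '$ is bounded (e.g.\ $\upsilon (t) = t\wedge 1$) reduces the second term of \Ass{C5} to the same type of integral. For \Ass{C3}, the identity $\Upsilon (t) = \int_t^1 \upsilon (s)^{-1}\, ds + \Upsilon (1)$ yields $\Upsilon (t) = O(\upsilon (t)^{-2})$ as $t\downarrow 0$, and the Palm formula applied at $t=0$ (where $\XX _0 \sim \la $) reduces \Ass{C3} to $\int_{\SR \ts \SR } \Upsilon (|x-y|)\, \rho^{2}_{\la }(x,y)\, dx\, dy$, again finite by \Ass{D}.

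The main obstacle is \Ass{C4}, namely an $L^1$ bound on
\begin{align*}
\int_0^{\tauRet } \Big( \7 ,\, b (X_u^1,\XX _u^{\onedia }) \Big)_{\Rd } du ,
\end{align*}
since $ b $ admits no uniform pointwise bound in the logarithmic examples of interest. Here I would invoke the Lyons--Zheng type martingale decomposition for ISDE solutions established in Appendix II (\sref{s:9}), which is available in the symmetric setting guaranteed by \Ass{S$_{\la }$} and \Ass{BX}. Applied to the $2$-labeled process with the test function $\varphi (x,y)\Upsilon (|x-y|)$ used in the It\^o expansion in the proof of \pref{l:51}, the decomposition rewrites the drift contribution as the difference of a forward and a time-reversed backward $L^2$-martingale under $\QQla^{[2]}$. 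The quadratic variations of both are controlled by the carr\'e du champ integral $\int \DDDa [f,f]\, d\la^{[2]}$, which is finite by \Ass{UB} and \Ass{D}; this supplies the uniform $L^1$ estimate in place of \Ass{C4}.

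With \Ass{C3}--\Ass{C5} (or the Lyons--Zheng substitute for \Ass{C4}) verified for $P = \QQla $, the argument in the proof of \pref{l:51} goes through unchanged and yields $\QQla (\XX _t \in \Ss \text{ for all }0\le t<\infty ) = 1$, which is \eqref{:51z}.
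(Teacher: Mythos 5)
Your strategy identifies the right key tool (the Lyons--Zheng decomposition of \lref{l:92}, combined with the stationarity from \Ass{S$_{\la }$}, \Ass{UB}, and \Ass{D}), but routing the argument through \pref{l:51} by verifying \Ass{C3}--\Ass{C5} does not close, for two concrete reasons. First, the second summand in \Ass{C5}, namely $\frac{1+\upsilon '(r)}{\upsilon (r)\, r}$ with $ r = |X_u^i-X_u^j|$, is \emph{not} dominated by $\upsilon (r)^{-2}$ for a general Osgood function: for $\upsilon (t)=t\log (1/t)$ the second summand is of order $t^{-2}$ while $\upsilon (t)^{-2}=t^{-2}\log ^{-2}(1/t)$, so \Ass{D} does not imply \eqref{:51t}. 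Your remedy of ``choosing $\upsilon (t)=t\wedge 1$'' is not available: \Ass{D} is a hypothesis about the $\upsilon $ fixed in \sref{s:5A}, and $\int_{\SR ^2}|x-y|^{-2}\rho _{\la }^2\,dxdy$ may well be infinite when \Ass{D} holds for the given (larger) $\upsilon $. This is exactly why \tref{l:34} does not list \Ass{C3}--\Ass{C5} among its hypotheses. Second, \lref{l:92} cannot be applied to $\varphi (x,y)\Upsilon (|x-y|)$ itself: one of its hypotheses is that $F(\www ^{[2]}_t)$ is a finite continuous semi-martingale satisfying \eqref{:92c} on all of $[0,T]$, and for the singular $\Upsilon $ this presupposes the non-collision you are trying to prove; moreover the time reversal $r_T$ does not interact simply with the stopping time $\tauRe $, so one cannot just ``apply the decomposition to the stopped Itô expansion'' of \eqref{:51d}.

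The paper's proof repairs both points by abandoning \pref{l:51} altogether: it introduces a regularization $\Upsilon ^{\epsilon }$ (constant near $0$, equal to $\Upsilon $ for $t\ge \epsilon $), sets $\Ge (x_1,x_2,\sss )=\Upsilon ^{\epsilon }(|x_1-x_2|)$, and applies \lref{l:92} to $\Ge $ under $\QQtwo $. Then the \emph{entire} increment $\Ge (\www ^{[2]}_t)-\Ge (\www ^{[2]}_0)$ — drift and second-order terms included — is written as a combination of a forward and a time-reversed martingale, so nothing like \Ass{C4} or the second half of \Ass{C5} is ever needed; the only inputs are $E_{\la }^{[2]}[\langle M^{[\Ge ]}\rangle _{T\wedge \tauRe }]\le \cref{;34}\,T\,E_{\la }^{[2]}[\upsilon (|w_0^1-w_0^2|)^{-2};\,w_0^1,w_0^2\in \SR ]$ and $E_{\la }^{[2]}[|\G (\www _0^{[2]})|^2;\cdots ]$, both finite by \Ass{D} and stationarity. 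Fatou's lemma as $\epsilon \downarrow 0$ then gives $\tauRz =\tauR $ and non-explosion finishes the proof. If you reorganize your argument along these lines — Lyons--Zheng applied to the smoothed $\Upsilon ^{\epsilon }$ for the whole increment, rather than as a patch for \Ass{C4} inside the Itô computation — the remaining steps you sketch (the Campbell-measure reduction of the initial-time and quadratic-variation bounds to \Ass{D}) are correct.
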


\begin{proof}  
We set $ \tauz ( \www ^{[2]} ) = \inf \{t> 0; ( w _t^1, w _t^2) \in \{ x=y \} \} $. 
Then \eqref{:51z} follows from 
\begin{align}\label{:61a}&
\QQtwo (\tauz (\www ^{[2]} )  < \infty ) = 0 
.\end{align}

Let $ \Upsilon $ be as in \sref{s:5A}. 
For $ 0 < \epsilon < 1 $ let $ \Upsilon ^{\epsilon } \in C^{\infty}([0,\infty))$ such that 
$ \Upsilon ^{\epsilon } (t)$ is constant for $ 0 \le t \le \epsilon / 2 $, 
 $ 0 \le \Upsilon ^{\epsilon }(t) \le \Upsilon (t) $ for $ \epsilon / 2 \le t \le \epsilon $,  and 
$ \Upsilon ^{\epsilon }(t) = \Upsilon (t) $ for $  \epsilon \le t $. 
Let $ \G $ and $ \Ge $ be the functions on $ \sS ^2\ts \sSS $ such that 
\begin{align}\label{:61B}&
 \G (x_1,x_2,\sss ) = \Upsilon ( |x_1-x_2| ),\quad  \Ge (x_1,x_2,\sss ) = \Upsilon ^{\epsilon } ( |x_1-x_2| )
.\end{align}
Then we have \eqref{:92w} with $ m = 2$ for $ \G $ and $ \Ge $ from \Ass{UB}, \Ass{S$_{\la }$} and \Ass{D}. 
We easily see that $\{ \Ge (\www ^{[2]} _t)\} $ are continuous semi-martingales. 
By \Ass{MF}, \Ass{BX},  and \Ass{S$_{\la }$}, we can apply \lref{l:92} to $ \{ \Ge (\www ^{[2]}  _t) \} $ 
under $ \QQla ^{[2]}$. Then we have 
\begin{align} \label{:61b}&
\Ge (\www ^{[2]}  _t) - \Ge (\www ^{[2]}  _0) = 
 \half \{ M_t^{[\Ge ]} + M_{T-t}^{[\Ge ]}(r_T ) - M_{T}^{[\Ge ]}(r_T ) \} 
.\end{align}
Here, 
$ \map{r_T}{C([0,T]; \sS ^2\ts \sSS )}{C([0,T]; \sS ^2\ts \sSS )}$ such that 
$ r_T(\mathbf{w}^{[2]}) (t) = \mathbf{w}^{[2]} ({T-t}) $. 
Furthermore, $ M^{[\Ge ]} $ is the continuous local martingale under $ \QQla ^{[0]}$ such that 
\begin{align}\label{:61d}& 
 \langle M ^{[\Ge ]} (\www ^{[2]}) \rangle _t = \sum_{i=1}^{2} 
\int_0^t 
\big ( a ( w _u^i, \ww _u^{\idia } ) \partial_i \Ge (\www _u^{[2]}), \partial_i \Ge (\www _u^{[2]}) 
\big)_{\Rd }du 
.\end{align}
Here we set $ \ww ^{\idia } = \sum_{j\not=i} \delta_{ w ^j}$ for $ \www = (\w ^j)_{j\in\N }$. 

Note that $ \QQtwo $ is not a probability measure. 
By abuse of notation, $ \Etwo [\cdot ]$ denotes the integral with respect to the measure 
$ \QQtwo $. 

Recall that $ \Upsilon ' (t) =  -1/ \upsilon (t) $. 
From \Ass{D} and $ | \Ge | \le | G |$, we have
\begin{align}\label{:61p}&
\Etwo [ \frac{1}{\upsilon ( | w _0^1- w _0^2| )^2}; w _0^1, w _0^2 \in \SR ] < \infty 
,\\\label{:61q} & \sup_{0 < \epsilon < 1 } 
 \Etwo [ |\Ge ( \www _0^{[2]} )|^2 ; w _0^1, w _0^2 \in \SR ] \le 
 \Etwo [ |\G ( \www _0^{[2]} )|^2 ; w _0^1, w _0^2 \in \SR ] < \infty 
.\end{align}
Let $ \tauRe (\www ^{[2]}) = \inf\{ t>0 ;  (\w _t^1 , w_t^2) \not\in \TRe \}  $ in \eqref{:51q}. 
From \eqref{:61b}, we see for $ 0 \le t \le T $ 
\begin{align}\label{:61e} & 
\Etwo [ \big| \Ge (\www ^{[2]}  _{t\wedge \tauRe }) - \Ge (\www ^{[2]}  _0) \big| ^2 ] 
\\\notag &= \frac{1}{4} 
\Etwo [ 
 \big| M_{t\wedge \tauRe }^{[\Ge ]} (\www ^{[2]}  ) + 
 M_{T- (t\wedge \tauRe ) }^{[\Ge ] } ( r_T(\www ^{[2]} ) ) - 
M_{T}^{[\Ge ] } ( r_T(\www ^{[2]} ) ) \big| ^2 ] 
\\ \notag & \le \half \Big\{
\Etwo [ \big| M_{t\wedge \tauRe }^{[\Ge ]} (\www ^{[2]}  ) \big| ^2 ] + 
\Etwo [ \big| M_{ T - (T \wedge \tauRe )}^{[\Ge ] }( r_T(\www ^{[2]} ) ) 
 - 
M_{T}^{[\Ge ] } ( r_T(\www ^{[2]} ) ) 
\big| ^2 ]
\Big\}
\\ \notag & \le \half \Big\{
\Etwo [ \big| M_{T\wedge \tauRe }^{[\Ge ]} (\www ^{[2]}  ) \big| ^2 ] + 
\Etwo [ \big| M_{ T - (T \wedge \tauRe )}^{[\Ge ] }( r_T(\www ^{[2]} ) )  - M_{T}^{[\Ge ] } ( r_T(\www ^{[2]} ) ) 
\big| ^2 ]
\Big\}
\\ \notag & = 
\Etwo [ \big| M_{T \wedge \tauRe }^{[\Ge ]} (\www ^{[2]}  ) \big| ^2 ] \quad \text{ by \Ass{S$_{\la }$}}
.\end{align}
From \eqref{:61B}, \eqref{:61d}, \Ass{UB}, and \Ass{S$_{\la }$}, we deduce that 
\begin{align}\label{:61f}
\Etwo & [ \big| M_{ T \wedge \tauRe }^{[\Ge ]} (\www ^{[2]}) \big| ^2] 
=
\Etwo [ \langle M ^{[\Ge ]} (\www ^{[2]}) \rangle _{ T \wedge \tauRe } ] &
\\ \le \notag & 
\cref{;34}
\Etwo [
\int_0^{ T \wedge \tauRe } 
 \frac{1}{\upsilon ( | w _u^1- w _u^2| )^2} du 
 ]
\le 
\cref{;34} T 
\Etwo [ \frac{1}{\upsilon ( | w _0^1- w _0^2| )^2} ;  w _0^1, w _0^2 \in \SR ]
.\end{align}

Putting \eqref{:61p}--\eqref{:61f} together, we deduce 
\begin{align}\label{:61G} 
&\sup_{0 < \epsilon < 1 } \Etwo [ \Big|\Ge (\www ^{[2]}  _{ T \wedge \tauRe }) \Big|^2 
 ;  w _0^1, w _0^2 \in \SR ] 
\\ \notag =&
\sup_{0 < \epsilon < 1 }
\Etwo [ \Big| \Ge (\www ^{[2]}  _{ T \wedge \tauRe }) - 
\Ge (\www ^{[2]}  _0) + \Ge (\www ^{[2]}  _0) \Big|^2 
 ;  w _0^1, w _0^2 \in \SR ] 
\\ \notag \le &
2 \Big\{ \sup_{0 < \epsilon < 1 }
\Etwo [ \Big| \Ge (\www ^{[2]}  _{ T \wedge \tauRe }) - 
\Ge (\www ^{[2]}  _0) \Big|^2 ] 
+
\sup_{0 < \epsilon < 1 }
\Etwo [ \Big| \Ge (\www ^{[2]}  _0) \Big|^2 ;  w _0^1, w _0^2 \in \SR ] \Big\} 
\\ \notag <&
 \infty 
.\end{align}
Let $ \tauRz = \limz{\epsilon } \tauRe $. Then, from Fatou's lemma and \eqref{:61G}, we obtain 
\begin{align}\notag 
\Etwo [ | \G (\www ^{[2]}  _{ T \wedge \tauRz }) |^2 ;  w _0^1, w _0^2 \in \SR ] 
& \le \liminfz{\epsilon } 
\Etwo [ | \G (\www ^{[2]}  _{ T \wedge \tauRe }) |^2 ;  w _0^1, w _0^2 \in \SR ] 
\\ \notag &
= \liminfz{\epsilon } \Etwo [ | \Ge (\www ^{[2]}  _{ T \wedge \tauRe }) |^2 ;  w _0^1, w _0^2 \in \SR ] 
< \infty 
.\end{align}
Hence we have for all $ R , T \in \N $ 
\begin{align}\label{:61j}&
1_{\SR ^2 } ( w _0^1, w _0^2) | \G (\www ^{[2]}  _{ T \wedge \tauRz })| < \infty \quad \text{ $ \QQtwo $-a.e}
.\end{align}
From \eqref{:61B} and \eqref{:61j}, we see $ \tauRz = \tauR $ holds $ \QQtwo$-a.e.\,for each $ R \in \N $, where 
$ \tauR $ is the exit time of $ ( w ^1, w ^2)$ from $ \SR ^2 $ as before. 
From this combined with \eqref{:34C} and \eqref{:34c}, we see 
$ \tauRz = \tauR $ holds $ \QQtwo $-a.s.\,for each $ R \in \N $. Hence, 
\begin{align}\label{:61k}&
 \QQtwo ( \limi{R} \tauRz < \infty ) = \QQtwo ( \limi{R} \tauR < \infty ) 
.\end{align}
Because each tagged particle does not explode, we see 
\begin{align}\label{:61l}&
 \QQtwo ( \tauz = \limi{R}\tauRz ) = 1, \quad \QQtwo ( \limi{R} \tauR < \infty ) = 0
.\end{align}
Hence, we obtain \eqref{:61a} from \eqref{:61k} and \eqref{:61l}. 
\end{proof} 

\begin{proposition} \label{l:63}
Make the same assumptions as for \tref{l:34}. Assume \eqref{:52m} in addition. Then \eqref{:55z} holds. 
\end{proposition}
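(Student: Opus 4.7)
The plan is to mimic the proof of \pref{l:62}, replacing the two-particle function $\Ge$ by the scalar cutoff function $\chiwtN$ from \sref{s:5B} applied to the unlabeled process $\XX$ (i.e., the case $m=0$ of \Ass{S$_\la$}). The goal is to produce an $L^2$-bound on $\chiwtN(\XX_{t\wedge \kappaq})$ that is uniform in $\NN,\qQ,\qq\in\N$, and then to send $\qQ,\NN,\qq\to\infty$ in this order so that the limit function $\chiwtI$ remains $\QQla$-integrable along $\XX_{t\wedge\kappai}$.

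For fixed $\NN,\qQ\in\N$, \lref{l:53} together with \eqref{:52m} and \Ass{UB} guarantees that $\chiwtN\in C^\infty(\sSS)$ is bounded by $\NN$ and satisfies $\DDDa[\chiwtN,\chiwtN]\le\cref{;34}$ pointwise. Using \Ass{MF}, \Ass{BX}, and \Ass{S$_\la$} at $m=0$, I would apply the Lyons--Zheng type decomposition \lref{l:92} to $\{\chiwtN(\XX_t)\}$ under $\QQla$, obtaining for each fixed $T>0$
\begin{align*}
\chiwtN(\XX_t) - \chiwtN(\XX_0) = \half \{ M_t^{[\chiwtN]} + M_{T-t}^{[\chiwtN]}(r_T) - M_T^{[\chiwtN]}(r_T) \},
\end{align*}
where $M^{[\chiwtN]}$ is a continuous $L^2$-martingale with $\langle M^{[\chiwtN]}\rangle_t \le 2\cref{;34}\, t$. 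The decisive feature of this symmetric identity is that the drift in \eqref{:54Z} is eliminated, so the singular behaviour of $b$ near $\Ka^c$ never enters the estimate.

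Reproducing the chain of bounds \eqref{:61e}--\eqref{:61f} with $T\wedge\tauRe$ replaced by $t\wedge\kappaq$, and using the $T_t^{[0]}$-invariance of $\la$ together with the standing hypothesis $\int_\sSS|\chiwtI|^2d\la<\infty$, should yield the uniform estimate
\begin{align*}
\sup_{\NN,\qQ,\qq\in\N} E_\la\bigl[|\chiwtN(\XX_{t\wedge\kappaq})|^2\bigr] \le 2\int_\sSS|\chiwtI|^2\, d\la + 2\cref{;34}\, t .
\end{align*}
Since $\chiwtN\uparrow\chiwtI$ and $\chiwtN\ge 0$, the monotone convergence theorem in $\qQ$ and then in $\NN$, followed by Fatou's lemma in $\qq$ (exactly as in \eqref{:55a}), gives $E_\la[|\chiwtI(\XX_{t\wedge\kappai})|^2]<\infty$. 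As $\chiwtI\equiv\infty$ on $\Ka^c$, this forces $\QQla(\kappai\le t)=0$, and letting $t\uparrow\infty$ yields \eqref{:55z}.

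The main technical obstacle is justifying the Lyons--Zheng decomposition for the \emph{non-local} test function $\chiwtN$: in \pref{l:62} it was applied to the local function $\Ge$, whereas $\chiwtN$ depends on every particle through $\dkQ$. One needs $\chiwtN$ to lie in the domain of the $L^2(\sSS,\la)$-symmetric Dirichlet form associated with $T_t^{[0]}$. I would handle this by approximating $\chiwtN$ by $\sigma[\piR]$-measurable local truncations, showing via the argument of \lref{l:53}\thetag{2} that their energies stay uniformly bounded by $\cref{;34}$, and passing to the limit; the It\^o representation \eqref{:54Z} of $\chiwtN$ together with the $L^2$-convergence \eqref{:54b} of its martingale part supplies the needed ingredients, so that \lref{l:92} applies to $\chiwtN$ by approximation.
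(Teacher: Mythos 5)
Your proposal is correct and follows essentially the same route as the paper's proof: apply the Lyons--Zheng decomposition of \lref{l:92} to $ \chiwtN $ under $ \QQla ^{[0]}$, use the uniform quadratic-variation bound $ \langle M^{[\chiwtN ]}\rangle _t \le 2\cref{;34}\,t $ together with the reversibility from \Ass{S$_{\la }$} and the hypothesis $ \int_{\sSS }|\chiwtI |^2 d\la < \infty $ to get an $ L^2$-bound at $ t\wedge \kappaq $ that is uniform in all indices, and then pass to the limit and invoke $ \chiwtI = \infty $ on $ \Ka ^c $ (the paper merely reverses the order slightly, first constructing $ M^{[\chiwtI ]}$ as the $ L^2$-limit of the $ M^{[\chiwtN ]}$ and then taking expectations). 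The one ``technical obstacle'' you raise is in fact absent: for finite $ \qQ $ the function $ \chiwtN $ depends only on the particles in $ \sS _{\qQ }$ and is therefore local, smooth, and bounded (\lref{l:53}), so \lref{l:92} applies to it directly, and the approximation by local truncations you propose is exactly the limit $ \qQ , \NN \to \infty $ already built into the construction of $ \chiwtI $.
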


\begin{proof}  
Note that $ \chiwtN $ is local smooth and $ \chiwtN $ and their derivatives are continuous on $ \sSS $. 
Applying \lref{l:92} to $ \chiwtN $ we have under $ \QQla ^{[0]}$ 
\begin{align}
\label{:63a}&
\chiwtN (\ww _t) - \chiwtN (\ww _0) = 
\half \{ M_t^{[\chiwtN ]} + M_{T-t}^{[\chiwtN ] }(r_T ) - M_{T}^{[\chiwtN ] }(r_T ) \} 
.\end{align}
Here, $ \map{r_T}{C([0,T];\sSS )}{C([0,T];\sSS )}$ is such that $ r_T(\ww ) (t) = \ww ({T-t}) $, 
where $ \ww =\{ \ww (t) \} $. Furthermore, 
$ M^{[\chiwtN ]} $ is a continuous local martingale under $ \QQla ^{[0]}$ such that 
\begin{align} \label{:62c}
 \langle M ^{[\chiwtN ]} \rangle _t (\ww ) & = 2 
\int_0^t 
\DDDa [ \chiwtN ,\chiwtN ] (\ww _u) du 
.\end{align}
By \eqref{:54b}, \eqref{:54bb}, \eqref{:63a}, and \eqref{:62c}, 
we deduce that $  M^{[\chiwtN ]} $  converges to the continuous local $ L^2$-martingale 
$ M^{[\chiwtI ]} $ such that 
\begin{align}
\label{:63c}&
\chiwtI (\ww _{t\wedge \kappaq }) - \chiwtI (\ww _0) = 
\half \{ M_{t\wedge \kappaq }^{[\chiwtI ]} + 
M_{T- ({t\wedge \kappaq (r_T ) })}^{[\chiwtI ] }(r_T ) - M_{T}^{[\chiwtI ] }(r_T ) \} 
,\\ \label{:63C}& 
 \langle M ^{[\chiwtI ]} \rangle _{t\wedge \kappaq } (\ww ) 
= 2 
\int_0^{t\wedge \kappaq }  1_{\Ka } (\ww _u) 
\DDDa [ \chiwtI ,\chiwtI ] (\ww _u) du 
.\end{align}
Taking the expectation of the square of both sides of \eqref{:63c}, we have 
\begin{align}
\label{:62d}
E[ |\chiwtI (\ww _{t\wedge \kappaq }) - & \chiwtI (\ww _0) |^2 ] 
= \frac{1}{4} 
E[ 
| M_{t\wedge \kappaq }^{[\chiwtI ]} + 
 M_{T- ({t\wedge \kappaq (r_T ) })}^{[\chiwtI ] }(r_T ) -M_{T}^{[\chiwtI ] }(r_T ) 
| ^2 ]
\\ \notag & \le \half \Big\{
E[ | M_{t\wedge \kappaq }^{[\chiwtI ]}|^2 ] + 
E[ | M_{T- ({t\wedge \kappaq (r_T ) })}^{[\chiwtI ] }(r_T ) -M_{T}^{[\chiwtI ] }(r_T )| ^2 ]
\Big\}
\\ \notag & \le 
E[ | M_{T \wedge \kappaq }^{[\chiwtI ]}|^2 ] 
.\end{align}
Then, we deduce from \eqref{:54bb} and \eqref{:63C} that
\begin{align}
\label{:62p}& 
E[ | M_{T \wedge \kappaq }^{[\chiwtI ]}|^2 ] 
= 
E[ \langle M ^{[\chiwtI ]}\rangle _{T \wedge \kappaq } ] 
= 2 
E[ \int_0^{ T \wedge \kappaq } 
 1_{\Ka } (\ww _u) 
\DDDa [ \chiwtI ,\chiwtI ] (\ww _u)
 du ] 
\le 2 T \cref{;34}
.\end{align}
Combining \eqref{:62d} with \eqref{:62p}, we see that for each $ 0\le t \le T < \infty $
\begin{align} \label{:62q} 
 \sup_{\qq \in \N } E[ | \chiwtI (\ww _{t\wedge \kappaq }) | ^2 ] & \le 
\half\sup_{\qq \in \N } 
 \{E[ |\chiwtI (\ww _{t\wedge \kappaq }) - \chiwtI (\ww _0) |^2 ] + E[ | \chiwtI (\ww _0) | ^2 ] \} 
\\ \notag &
\le \half \{ 2 T \cref{;34} + E[ | \chiwtI (\ww _0) | ^2 ] \} 
\\ \notag &
< \infty \quad \text{ by $ \int_{\sSS } | \chiwtI | ^2 d\la < \infty $}
.\end{align}
Let $ \kappai =\limi{\qq } \kappaq $ as before. 
Then, similarly as \eqref{:55a}, we have by \eqref{:62q} 
\begin{align}
\label{:62s}&
E [\chiwtI (\ww _{t\wedge \kappai })] \le 
\liminfi{\qq } E[\chiwtI (\ww _{t\wedge \kappaq }) ] < \infty 
\quad \text{ for each $ 0 \le t < \infty $}
.\end{align}

By \eqref{:62s}, we see $ \chiwtI (\ww _{t\wedge \kappai }) < \infty $ a.s.\,for each $  t < \infty $. 
By \eqref{:53a}, we see $ \chiwtI (\ww _{ \kappai }) = \infty $. 
Combining these, we deduce $ \QQtwo ( t < \kappai ) = 1 $ for each $ t $, which implies \eqref{:55z}. 
\end{proof} 

\noindent {\em Proof of \tref{l:34}. } 
\tref{l:34} follows from \pref{l:62} and \pref{l:63}. 
\qed

\section{Examples} \label{s:7}

Following \cite{o-t.tail}, we present examples satisfying the assumptions of the main theorems. 
In all the examples in this section, $ \sigma $ is the unit matrix. 
In \eref{e:71}--\eref{e:77}, $ b (x,\yy ) = \frac{1}{2} \dmu (x,\yy )$, 
where $ \dmu $ is the logarithmic derivative of the random point field $ \mu $ associated with the ISDE. 

The first three examples are infinite particle systems in one-dimensional space, and the fourth example is in $ \R^2 $. 
These four examples arise from the random matrix theory and have logarithmic interaction potential. 

\eref{e:75}--\eref{e:77} are related to Ruelle's class interaction potentials. 
The equilibrium states for these examples are canonical Gibbs measures described by the Dobrushin--Lanford--Ruelle (DLR) equation. 
We consider only non-symmetric solutions in \eref{e:78}. 
Here, non-symmetric means the associated unlabeled dynamics are not reversible to the given equilibrium state. 
We construct such dynamics by adding skew-symmetric drift coefficients. 

\begin{example}[sine$_{\beta }$ random point fields] \label{e:71}
Let $ \sS = \mathbb{R}$. We consider 
\begin{align}& \label{:71a} 
 dX_t^i = dB_t^i + \frac{\beta }{2} 
 \lim_{r\to\infty }\sum_{|X_t^i - X_t^j |< r, \ j\not= i}^{\infty} 
 \frac{1}{X_t^i - X_t^j } dt \quad (i\in\mathbb{Z})
.\end{align}
Let $ \mu _{\mathrm{sin},\beta }$ be the sine$_{\beta }$ random point field \cite{Meh04,forrester}. 
We take $ \beta = 1,2,4$. 
By definition, $ \mu _{\mathrm{sin},2 } $ is the random point field on 
$ \mathbb{R}$ for which the $ n $-point correlation function $ \rho_{\mathrm{sin},2 }^{n} $ 
with respect to the Lebesgue measure is given by 
\begin{align} \notag &
 \rho_{\mathrm{sin},2 }^{n} (\mathbf{x}^n ) = \det [K _{\mathrm{sin},2 } (x_i,x_j)]\ijn 
. \end{align}
Here, $K _{\mathrm{sin},2 } (x,y) = \sin \pi (x-y)/\pi (x-y)$ is the sine kernel. 
$ \musinone $ and $ \musinfour $ are also defined by correlation functions 
given by quaternion determinants \cite{Meh04}. 
For $ \beta =1,2,4$, $ \mu _{\mathrm{sin},\beta } $ 
are quasi-Gibbs measures \cite{o.rm,o.isde} and the logarithmic derivatives are given by 
\begin{align}\label{:71c}&
 \dlog ^{ \mu _{\mathrm{sin},\beta } } (x,\yy ) = \beta \limi{r} \sum_{|x-y_i|<r} 
\frac{1}{x-y_i}
\quad \text{ in }L_{\mathrm{loc}}^1 ( \R \ts \sSS , \mu _{\mathrm{sin},\beta }^{[1]})
.\end{align}
If $ \beta = 2 $, the solution of \eqref{:71a} is called the Dyson model in infinite dimensions \cite{Spo87}. 
\end{example}


\begin{example}[Bessel random point fields]\label{e:73} 
Let $ \sS = [0,\infty)$ and $ 1 \le \alpha < \infty $. 
Let $ \beta = 2 $. We consider 
\begin{align} \label{:73a} 
& dX_t^i = dB_t^i + \{ \frac{\alpha }{2X_t^i } + \sum _{ j\not = i }^{\infty}
\frac{1}{X_t^i - X_t^j} \} dt \quad (i \in \N )
.\end{align}
Let $ \mu _{\mathrm{Be},\alpha }$ be the Bessel$_{2,\alpha }$ random point field. 
The $ n $-point correlation function $ \rho _{\mathrm{Be},\alpha }^n $ 
with respect to the Lebesgue measure is given by 
\begin{align}\notag &
 \rho _{\mathrm{Be},\alpha }^n (\mathbf{x}^n) = 
 \det [ K _{\mathrm{Be},\alpha }(x_i,x_j)] \ijn 
.\end{align}
Here, $ K _{\mathrm{Be},\alpha } $ is a continuous kernel given by 
the Bessel function $ J_{\alpha }$ for $ \alpha $ of the first kind such that 
\begin{align}\notag 
&
 K _{\mathrm{Be},\alpha }(x,y) = 
 \frac{J_{\alpha } (\sqrt{x}) \sqrt{y} J_{\alpha }' (\sqrt{y}) - 
 \sqrt{x} J_{\alpha }' (\sqrt{x})J_{\alpha }(\sqrt{y}) }{2(x-y)}
\quad (x\not=y)
.\end{align}
In \cite{h-o.bes}, it was proved that the logarithmic derivative $ \dlog ^{ \mu _{\mathrm{Be},\alpha }}$ 
of $ \mu _{\mathrm{Be},\alpha }$ is given by 
\begin{align}\label{:73d}&
 \dlog ^{ \mu _{\mathrm{Be},\alpha }} (x,\yy ) = \frac{\alpha}{x} + \sum_{i} \frac{2}{x-y_i}
\quad \text{ in }L_{\mathrm{loc}}^1 (\R \ts \sSS , \mub ^{[1]})
.\end{align}
The sum in \eqref{:73d} converges absolutely, unlike in the previous examples. 
\end{example}

\begin{example}[Ginibre random point field] \label{e:74} 
Let $ S=\R ^2 $ and $ \beta = 2 $. We consider two ISDEs: 
\begin{align}\label{:74a}&
dX_t^i = dB_t^i + \limi{r} \sum_{|X_t^i-X_t^j|<r,\ j\not=i } 
\frac{X_t^i-X_t^j}{|X_t^i-X_t^j|^2} dt \quad (i\in\mathbb{N})
\end{align}
and 
\begin{align}\label{:74b}&
dX_t^i = dB_t^i - X_t^i dt + \limi{r} \sum_{|X_t^j|<r,\ j\not=i } 
\frac{X_t^i-X_t^j}{|X_t^i-X_t^j|^2} dt \quad (i\in\mathbb{N})
.\end{align}
Note that ISDEs \eqref{:74a} and \eqref{:74b} have 
the same weak solutions \cite{o.isde}. The associated unlabeled diffusion is reversible with respect to the Ginibre random point field $ \mug $, which is the determinantal random point field on $ \Rtwo $ that has the kernel 
$ \kg (x,y) = e^{x\bar{y}}$ with respect to the complex Gaussian measure $ (1/\pi ) e^{-|z|^2}dz $. 
Here, we regard $ \Rtwo $ as $ \mathbb{C} $ in an obvious manner. $ \mug $ is quasi-Gibbs and has logarithmic derivatives with plural representations \cite{o.rm,o.isde}. 
In $ L_{\mathrm{loc}}^2(\Rtwo \ts \sSS , \mug ^{[1]})$, we have 
\begin{align*}
\dlog ^{\mug } (x,\sss )&
= \limi{ r } 2\sum_{|x-s_i|\le r } \frac{x-s_i}{|x-s_i|^2} 
 = -2x + \limi{ r } 2\sum_{|s_i| \le r } \frac{x-s_i}{|x-s_i|^2}
.\end{align*}
\end{example}

All of the above examples are related to random matrix theory. 
ISDEs \eqref{:71a} with $ \beta = 1,2,4 $ 
are the bulk scaling limit of the finite-particle systems of 
Gaussian orthogonal/unitary/symplectic ensembles, respectively. 
ISDE \eqref{:73a} 
is the hard-edge scaling limit of finite-particle systems of the Laguerre ensembles. 
 ISDEs \eqref{:74a} and \eqref{:74b} are bulk scaling limits of the Ginibre ensemble, which is a system of eigenvalues of non-Hermitian Gaussian random matrices.

\begin{example}[Ruelle's class potentials]\label{e:75} 
Let $ \sS = \Rd $ with $ d \in \N $. Let $ \Phi = 0$ and $ \Psi (x,y) = \beta \Psi_0 (x-y)$. 
The ISDE then becomes 
\begin{align}\notag &
dX^i_t = dB^i_t - 
\frac{\beta }{2} \sum ^{\infty}_{j=1,j\ne i} 
\nabla \Psi_0 (X_t^i - X_t^j ) dt 
\quad (i\in \N )
. \end{align}
Assume that $ \Psi_0 $ is a Ruelle's class potential that is smooth outside the origin. 
That is, $ \Psi_0 $ is super-stable and regular in the sense of Ruelle \cite{ruelle.2}. Here, we say that $ \Psi_0$ is regular if there exists a positive decreasing function $ \map{\psi }{\mathbb{R}^+}{\mathbb{R}}$ and $ R_0 $ such that 
\begin{align} \notag & 
\Psi _0 (x) \ge - \psi (|x|) \quad \text{ for all } x ,
 \quad 
\Psi _0 (x) \le \psi (|x|) \quad \text{ for all } 
 |x| \ge R_0 ,
\\ \notag &
\int_0^{\infty} \psi (t)\, t^{d-1}dt < \infty 
.\end{align}
Let $ \mupsi $ be a canonical Gibbs measure with interaction $ \Psi _0$. 
Let $ \rho ^m $ be the $ m $-point correlation function of $ \mupsi $. 
We assume a quantitative condition in \eqref{:75c}. 
\\
\noindent 
\Ass{Gib} For each $ \pp \in \N $, there exist positive constants 
$ \Ct \label{;29c} $ and $ \Ct \label{;29d} $ satisfying 
\begin{align} \label{:75c}&
\sum_{r=1}^{\infty} 
\frac{\int_{\Sr }\rho ^1 (x)dx } {r^{ \cref{;29c} +1 }} < \infty , \quad 
\limsup_{r\to\infty } 
\frac{\int_{\Sr }\rho ^1 (x)dx } {r^{ \cref{;29c} }} < \infty 
,\\
\notag 
& 
 | \nabla \Psi _0 ( x )|,\ | \nabla ^2 \Psi _0 ( x )| \le
\frac{\cref{;29d} }{ (1+|x|)^{ \cref{;29c} }}
\quad \text{ for all $ x $ such that $ |x|\ge 1/\pp $. }
\end{align}
It was proved in \cite[{Lemma 13.5}]{o-t.tail} that the logarithmic derivative of $ \mupsi $ is given by 
\begin{align}
\label{:75e}&
\dmu (x.\yy ) = -\beta \sum_{j=1}^{\infty} \nabla \Psi _0 (x-y_j) 
.\end{align}
The sum in \eqref{:75e} converges absolutely, unlike Examples \ref{e:71} and \ref{e:74}. 
\end{example}

The next two examples are individual cases of Example \ref{e:75}. 
We present only the interaction potentials and ISDEs. 
\begin{example}[Lennard--Jones 6-12 potentials]\label{e:76}
Let $ d= 3$, $ \beta > 0$, and 
$$\Psi_{6,12}(x) = \{|x|^{-12}-|x|^{-6}\} .$$
The interaction $ \Psi_{6,12}$ is called the Lennard--Jones 6-12 potential. 
The ISDE is 
\begin{align} \notag 
dX^i_t = dB^i_t + \frac{\beta }{2} \sum ^{\infty}_{j=1,j\ne i} \{
\frac{12(X ^i_t-X^j_t)}{|X^i_t-X^j_t|^{14}} - 
\frac{6(X ^i_t-X^j_t)}{|X^i_t-X^j_t|^{8}}\, \} dt \quad (i\in \N )
. \end{align}
\end{example}
\begin{example}[Riesz potentials] \label{e:77}
Let $ d < a \in \R $, $ 0 < \beta $, 
and set $\Psi_a(x)=(\beta / a )|x|^{-a}$. 
The corresponding ISDE is 
\begin{align}& \notag 
dX^i_t=dB^i_t + \frac{\beta }{2}
\sum ^{\infty}_{j=1,j\ne i} \frac{X ^i_t-X^j_t}{| X^i_t-X^j_t|^{a+2}}dt 
\quad (i\in \N ) 
.\end{align}
\end{example}

\begin{example}[Non-symmetric case] \label{e:78}
Let $ \Psi _0 $ be a Ruelle's class potential. We assume $ \Psi _0 \in C_0 ^3 (\Rd )$ and $ d \ge 3 $. 
Let $ \mu $ be an associated canonical Gibbs measure. We take $ \la = \mu $. 
We assume that $ \mu $ has locally bounded $ m $-point correlation functions for all $ m $. 
Then, the logarithmic derivative of $ \mu $ is given by 
\begin{align} \notag &
\dmu (x,\sss ) = - \beta \sum _{i} \nabla \Psi _0 (x-s_i )
.\end{align}
Let $ \gamma _0 $ be an $ \Rd $-valued function on $ \Rd $ such that $ \gamma _0 \in C_0^2 (\Rd ) $. 
Let 
\begin{align} \notag &
\gamma (x,\sss ) = \beta \sum_{i} \gamma _0 (x-s_i)
.\end{align}
We consider the ISDE
\begin{align}\label{:78z}&
dX_t^i = dB_t^i + \half \{\dmu (X_t^i, \XX _t^{\idia }) + \gamma (X_t^i, \XX _t^{\idia })\}dt 
\end{align}
under the assumption that 
\begin{align}\label{:78c}&
\mathrm{div} \gamma + \gamma \cdot \dmu = 0
.\end{align}
An example of $ \Psi _0 $, $ \mu $, and $ \gamma _0$ satisfying \eqref{:78c} is 
$ \Psi _0 = 0 $, $ \la = \mu $ is the Poisson random point field whose intensity is the Lebesgue measure, and 
$ \gamma _0 = (\gamma _{0 \kK })_{\kK =1}^d$ 
is the derivative of a skew-symmetric potential $ \Gamma = (\Gamma _{\kl })\klD $ such that 
\begin{align} \notag &
\gamma _0 (x) = \sumkD \PD{\Gamma _{\kl }}{x_{\lL }} (x), \quad \Gamma _{\kl } (x)= -\Gamma _{\lL \kK }(x) 
.\end{align}
Here $ x=(x_1,\ldots,x_d)\in\Rd $. 
From \eqref{:78c}, $ \mu ^{[m]}$ is an invariant measure of $ \XX ^{[m]}$. 

To apply \tref{l:31} and \tref{l:32}, we check \Ass{B1}, \Ass{B2}, and \Ass{B3}. 
 \Ass{B2} and \Ass{B3} follow from \tref{l:33}. 
Indeed, we can take $ \ell = 1$ because $ \Psi _0$ and $ \Gamma $ have compact supports. 
As for the construction of a weak solution of \eqref{:78z}, we can use \lref{l:80}. 
We can take a suitable finite particle approximation $ \muN $ because $ \Psi _0 $ is of Ruelle's class 
and has a compact support. Moreover, $ \gamma _0 $ also has a compact support. 

To obtain \Ass{B1}, we use \tref{l:5X}. 
So we quickly check the assumptions of \tref{l:5X}. 
\Ass{UB} is obvious. 
\Ass{C3} is clear because the two-point correlation function is bounded. 
\Ass{C4} and \Ass{C5} follow from boundedness of the two-point correlation function, $ d \ge 3 $, 
and the assumptions such that 
$ \Psi _0 \in C_0^3(\Rd )$ and $ \gamma _0 \in C_0^2 (\Rd ) $ and that 
$ \mu ^{[m]}$ are invariant measures of $ \XX ^{[m]}$. 
It is not difficult to see that $ \mu $ satisfies \eqref{:52m}, \eqref{:55h}, and \eqref{:5X1} 
if $ \mu $ is a translation invariant Poisson random point field. 
Hence, we obtain \Ass{B1} from \tref{l:5X}. 

It is plausible that one can generalize the example to canonical Gibbs measures and the long-range case. For this, more work is required and is left to the reader. 
\end{example}

\section{Appendix I: Weak solutions of ISDEs} \label{s:8}

In this section, we quickly review some previous results. 
In \cite{o.tp,o.isde,o.rm,o.rm2,k-o.fpa}, we presented weak solutions to ISDEs; 
 \cite{o.tp,o.isde,o.rm,o.rm2} were devoted to symmetric cases, whereas \cite{k-o.fpa} considered both non-symmetric and symmetric cases. 
Hence, together with the results in \cite{o-t.tail} and the present paper, we obtain unique strong solutions of ISDEs. 

\subsection{Construction of weak solutions: non-symmetric case} \label{s:80}
In \sref{s:80}, we follow the process for constructing weak solutions in \cite{k-o.fpa}. 
The results are valid for non-symmetric solutions.

Let $ \{\muN \} $ be a sequence of random point fields on $\sS $ 
such that $ \muN (\{ \sss (\sS ) = \nN \} ) = 1 $. 
Let $ \labN $ be a label of $ \muN $ and $ \labmN = (\lab ^{\nN ,1},\ldots,\lab ^{\nN ,m})$, where 
$ m \le \nN $. 
We assume the following. \\
\Ass{H1} Each $ \muN $ has a correlation function $ \{\rho ^{N,n}\} $ with respect to the Lebesgue measure satisfying, for each $r \in\N$, 
\begin{align}\label{:80a} & 
\limi{N} \rho ^{N,n} (\mathbf{x})= 
\rho ^{n} (\mathbf{x}) \quad \text{ uniformly on $\Sr ^{n}$ for all $n\in\N$} 
,\\ \label{:80b}& 
\sup_{N\in\N } \sup_{\mathbf{x} \in \Sr ^{n}} \rho ^{N,n} (\mathbf{x}) \le 
\cref{;40b} ^{n} n ^{\cref{;40c}n} \quad \text{ for all $n\in\N$}
,\end{align}
where $ 0 < \Ct \label{;40b}(r) < \infty $ and $ 0 < \Ct \label{;40c}(r)< 1 $ are constants independent of $ n \in \N $.

\smallskip 
\noindent
\Ass{H2} For each $m\in\N$, 
$ \limi{N}\mu^{\nN } \circ (\labmN )^{-1} =\mu \circ (\labm )^{-1}$ 
weakly in $ \sS ^m $. 

\smallskip 

We take $ \mu^{\nN } \circ (\labN )^{-1} $ as an initial distribution of the labeled finite-particle system, 
and \Ass{H2} refers to the convergence of the initial distribution of the labeled dynamics. 

For $ \mathbf{X}^{\nN }=(X^{\nN ,i})_{i=1}^{\nN }$, we set 
$ \XX _t^{N,\idia}= \sum_{j\not=i}^{ N } \delta_{X_t^{N,j}} $, 
where $X_t^{N,\idia}$ denotes the zero measure for $ \nN = 1 $. 
Let $ \map{\sN }{\sS \ts \sSS }{\R ^{d^2}}$ and 
$ \map{\bbb ^{\nN } }{\sS \ts \sSS }{\Rd }$ be measurable functions. 
The finite-dimensional SDE of $ \mathbf{X}^{\nN } =(X^{\nN ,i})_{i=1}^{\nN } $ is given by 
\begin{align}\label{:80d}
dX_t^{N,i} &= 
\sN (X_t^{N,i},\XNidt )dB_t^i + \bbb ^{\nN }(X_t^{N,i},\XNidt )dt 
\quad \text{ ($ 1\le i\le N $)}
,\\\label{:80e}
\mathbf{X} _0^{\nN } & = \mathbf{s} 
.\end{align}

\noindent \Ass{H3} 
SDE \eqref{:80d} and \eqref{:80e} has a weak solution for $ \muN \circ (\labN )^{-1}$-a.s.\! $ \mathbf{s}$ 
for each $ \nN $ and this solution neither explodes nor hits the boundary (when $ \partial \sS $ is non-void).

\smallskip 

\noindent 
\Ass{H4} 
$\sN $ are bounded and continuous on $ \sS \ts \sSS $, 
and converge uniformly to $\sigma $ on $ \SrSS $ for each $ r \in \N $. 
Furthermore, $\aN := \sN {}^{t}\sN $ are uniformly elliptic on $ \Sr \ts \sSS $ for each $ r \in \N $ and 
$ \PD{}{x} \aN (x,\sss ) $ are uniformly bounded on $ \sS \ts \sSS $.

\smallskip 

Let $ \overline{\mathbf{X}}_T^{N,m}$ be the maximal module variable of 
the first $ m $ particles such that 
\begin{align}&\notag 
\overline{\mathbf{X}}_T^{N,m}= \max_{i=1,\ldots,m} 
 \sup_{t\in [0,T] }|X_t^{N,i}| 
.\end{align}

\noindent 
\Ass{I1} For each $ T , m \in \N $, 
\begin{align}&\notag %
\limi{a} \liminfi{\nN }P ^{\muNl } (\overline{\mathbf{X}}_T^{N,m} \le a ) = 1 
\end{align}
and, for each $ m , a \in \N $, there exists a constant $\Ct \label{;41a}=\cref{;41a}(m,a)$ such that, for $ 0 \le t , u \le T $,
\begin{align}&\notag 
\supN \sum_{i=1}^m 
\mathrm{E} ^{\muNl }
[|X _t^{N,i} - X _u^{N,i}|^{4 };\overline{\mathbf{X}}_T^{N,m} \le a ] 
 \le \cref{;41a} |t-u|^{2} 
.\end{align}
Furthermore, $ \mrT $, defined by \eqref{:25f}, satisfies 
\begin{align}&\notag 
\limi{L} \liminfi{\nN }P ^{\muNl } ( \mrXXN \le L ) = 1 
\quad \text{ for each $ r \in \N $}
.\end{align}

Let $ \muNone $ be the one-Campbell measure of $ \muN $. 
Set $ \Ct \label{;43}(\rrr , N)= \muNone ( \Sr \ts \sSS )$. 
Then, by \eqref{:80b}, $\sup_{N}\cref{;43}(\rrr ,N)<\infty $ for each $\rrr \in \N $. 
Without loss of generality, we can assume that $\cref{;43}(\rrr , N) > 0 $ for all $\rrr , N $. 
Let $ \muNone _r = \muNone (\cdot \cap \{\Sr \ts \sSS \}) $. 
Let $ \muNonebar $ be the probability measure defined as 
\begin{align*}&
\muNonebar (\cdot )= \muNone (\cdot \cap \{\Sr \ts \sSS \})/\cref{;43}(\rrr , N)
.\end{align*}
Let $ \varpi _{r,s}$ be the map from $ \Sr \ts \sSS $ to itself such that 
$ \varpi _{r,s} (x,\sss ) = (x,\sum_{|x-s_i|<s} \delta_{s_i})$, where $ \sss = \sum_i \delta_{s_i}$. 
Let $ \mathcal{F}_{\rrr ,s} = \sigma [\varpi _{r,s}]$ be the sub-$ \sigma $-field of 
$ \mathcal{B}(\Sr \ts \mathcal{\sSS }) $. 
Because $ \Sr \subset \sS $, we can and do regard 
$ \mathcal{F}_{\rrr ,s} $ as a $ \sigma $-field on $ \sS \ts \sSS $, 
which is trivial outside $ \Sr \ts \sSS $. 

We set a tail-truncated coefficient $ \bNrs $ of $ \bN $ and their tail parts $ {\bbb _{r,s}^{N,\tail}}  $ by 
\begin{align} \label{:80f}&
 \bbb _{r,s}^{\nN } = \mathrm{E}^{\muNonebar } [ \bbb ^{N}|\mathcal{F}_{\rrr ,s} ] ,\quad 
\bN = \bbb _{r,s}^{\nN } + {\bbb _{r,s}^{N,\tail}}  
.\end{align}
By construction, $ \muNonebar ( \Sr ^c \ts \sSS ) = 0 $. 
Hence, we can and do take a version of $ \bNrs $ such that 
\begin{align}
\label{:80g}& 
\bNrs (x,\yy ) = 0 \text{ for } x \not\in \Sr 
,\quad 
 \bNrs (x,\yy ) = \bbb _{r+1,s}^{N} (x,\yy ) 
\text{ for } x \in \Sr 
.\end{align}
Let $ \bNrsp $ be a continuous and $ \mathcal{F}_{\rrr ,s}$-measurable function on $ \sS \ts \sSS $ such that 
\begin{align}\notag 
& \bNrsp (x,\yy ) =0 \quad \text{ for } x \not\in \Sr 
,\\\notag 
& \bNrsp (x,\yy ) = \bbb _{r-1,s,\p }^{N} (x,\yy ) \quad \quad \text{ for } x \in \sS _{r-1} 
,\\\label{:80h}&
 \bNrsp (x,\yy ) = 0 \quad \text{ for $ (x,\yy ) \in (\sS \ts \sSS )_{r,\p +1} $}
,\\ \label{:80i}&
 \bNrsp (x,\yy ) = \bbb _{r,s}^{N} (x,\yy ) 
 \quad \text{ for $ (x,\yy ) \not\in (\sS \ts \sSS )_{r,\p } $}
.\end{align}
Here, $ (\sS \ts \sSS )_{r,\p } = \{ (x,\yy ) \in \Sr \ts \sSS 
;\, |x- y_i|\le 1/2^{\p } \text{ for some } y_i \}$, where $ \yy =\sum_i\delta_{y_i}$.

The main requirements for $ \bN $ and $ \bNrsp $ are the following:

\noindent 
\Ass{I2} There exists some $ \phat $ such that 
$ 1 < \phat $ and, for each $ r \in \N $,
\begin{align}\notag &
\limsupi{\nN } \int _{\SrSS }|\bN | ^{\phat } d\muNone < \infty 
.\end{align}
For each $ r , i , T \in \N $, there exists a constant $ \Ct \label{;40i}$ such that 
\begin{align}\notag &
\sup_{\p \in\N } 
\sup_{\nN \in \N } E ^{\muNl } 
[\int_0^T | \bNrsp (X_t^{N,i},\XNidt ) | ^{\phat } dt ] \le 
\cref{;40i}
.\end{align}

We decompose $ \bNrs $ as 
\begin{align}\label{:80j}&
\bNrs = \bNrsp +  {\bNrs - \bNrsp } 
.\end{align}

Let $ \| \cdot \|_{\sS \ts \SSrm } $ be the uniform norm on the space of functions $ \sS \ts \SSrm $, 
where $ \SSrm = \{\sss \in \sS \ts \sSS \, ; \sss (\Sr ) = m\}$. 
Then $ \| f\|_{\sS \ts \SSrm } = \sup \{ |f (x,\sss ) | \, ; \, (x,\sss ) \in \sS \ts \SSrm \} $ by definition. 

\medskip 
\noindent 
\Ass{I3} 
For each $ m ,\p , r , s \in \N $ such that $ r < s $, there exists some $ \bbb _{r,s,\p } $ such that 
\begin{align}
\label{:80k} & 
\limi{\nN } \| \bbb _{r,s,\p }^{\nN } - \bbb _{r,s,\p } \|_{\sS \ts \SSsm }
= 0 
.\\
\intertext{Moreover, $ \bbb _{r,s,\p }^{\nN } $ are differentiable in $ x $ and satisfy the bounds: }
\notag &
\supN \| \PD{}{x} \bbb _{r,s,\p }^{\nN } \|_{\sS \ts \SSsm } < \infty 
,\\\notag &
\limi{\p } \supN \| \bNrsp - \bNrs \|_{ L^{\phat }(\Sr \ts \sSS , \muNone ) } =0 
.\end{align}
Furthermore, for each $ i , r < s , T \in \N $, we assume that 
\begin{align}\notag &
\limi{\p } \limsupi{\nN } 
E ^{\muNl } 
[\int_0^T | \{ \bNrsp - \bNrs \} (X_t^{N,i},\XNidt ) | ^{\phat }
dt ] = 0 
,\\\notag &
\limi{\p }
E ^{\mul } 
[\int_0^T | \{ \brsp - \brs \} (X_t^{i},\Xidt ) | ^{\phat }
dt ] = 0 
,\end{align}
where $ \bbb _{r,s} $ is such that 
\begin{align}\label{:80p}&
 \bbb _{r,s} (x,\yy ) = \limi{\nN } \bbb _{r,s}^{\nN } (x,\yy ) 
\quad \text{ for each } (x,\yy ) \in \bigcup_{\p \in \N }
 (\sS \ts \sSS )_{r,\p } ^c 
.\end{align}
By definition 
$ \bigcup_{\p \in \N } (\sS \ts \sSS )_{r,\p } ^c = \{\Sr ^c \ts \sSS \} \cup 
\{ (x,\yy ) ; x \not= y_i \text{ for all }i\} $. 
$ \bbb _{r,s} (x,\yy ) = 0 $ for $ x \not\in \Sr $ by \eqref{:80g}. 
The limit in \eqref{:80p} exists because of \eqref{:80h}, \eqref{:80i}, and \eqref{:80k}. 
\smallskip 

\noindent 
\Ass{I4} 
There exists some $ \btail \in C (\sS ; \Rd )$, independent of $ r \in \N $ and $ \sss \in \sSS $, such that 
\begin{align}\notag &
\limi{s} \limsupi{\nN } \| \bNrstail - \btail 
\|_{ L^{\phat }(\Sr \ts \sSS , \muNone ) } 
 =0 
.\end{align}
Furthermore, for each $ r , i , T \in \N $, 
\begin{align}\notag &
\limi{s} \limsupi{\nN } E ^{\muNl } 
[\int_0^T | ( \bNrstail - \btail ) (X_t^{N,i},\XNidt ) | ^{\phat } dt ] = 0 
.\end{align}

We remark that $ \btail $ is independent of $ r $ because of the consistency of \eqref{:80i}. 
By assumption, $ \btail = \btail (x)$ is a function of $ x $. 
From \eqref{:80f} and \eqref{:80j}, we have that 
\begin{align}\label{:80s}&
\bN = \bNrsp + \btail + \{ {\bNrs - \bNrsp } \} + \{ {\bbb _{r,s}^{N,\tail}}  -\btail \} 
.\end{align}
Then, \Ass{I3} and \Ass{I4} imply that the last two terms 
$ \{ {\bNrs - \bNrsp } \} $ and $ \{ {\bbb _{r,s}^{N,\tail}}  -\btail \} $ in \eqref{:80s} are asymptotically negligible. 
Under these assumptions, there exists some $ \bbb $ such that, for each $ r \in \N $ (see \cite[Lemma 3.1]{k-o.fpa}),
\begin{align}
&\notag 
\limi{s} \| \brs - \bbb \|_{ L^{\phat }( \Sr \ts \sSS ,\muNone ) } = 0 
.\end{align}
\Ass{I5} For each $ i , r , T \in \N $, 
\begin{align} \notag &
\limi{s }
E ^{\mul } 
[\int_0^T | ( \brs - \bbb ) (X_t^{i},\Xidt ) | ^{\phat }
dt ] = 0 
.\end{align}

A sequence $ \{ \mathbf{X}^{\nN } \} $ of $ C([0,T];\sS ^{\nN })$-valued 
random variables is said to be tight if, for any subsequence, we can choose a subsequence denoted by the same symbol such that 
$ \{ \mathbf{X}^{\nN , m } \}_{\nN \in \N  , m \le \nN  } $, 
where $\mathbf{X}^{\nN ,m} =  (X^{\nN ,1},\ldots,X^{\nN ,m}) $, 
 is convergent in law in $ C([0,T]; \sS ^m )$ for each $ m \in \N $. 
We quote: 
\begin{lemma}[{\cite[{Theorem 2.1}]{k-o.fpa}}]\label{l:80}
Assume that \Ass{H1}--\Ass{H4} and \Ass{I1}--\Ass{I5} hold.
Then, for each $ T \in \N $, $ \{ \mathbf{X}^{\nN } \} _{\nN \in \N } $ 
is tight in $ C([0,T];\sS ^{\N } )$ and any limit point 
$ \mathbf{X} = (X^i)_{i\in\N } $ of $ \{ \mathbf{X}^{\nN } \} _{\nN \in \N } $ 
is a weak solution of the ISDE 
\begin{align}\label{:80z}&
dX_t^i = \sigma (X_t^i,\XX _t^{\idia }) dB_t^i + 
\{\bbb (X_t^i,\XX _t^{\idia })+ 
\btail (X_t^i)\} dt 
.\end{align}
\end{lemma}
Clearly, we deduce from \lref{l:80} that $ \{ \mathbf{X}^{\nN } \} _{\nN \in \N } $ 
is tight in $ C([0,\infty );\sS ^{\N } )$ and any limit point 
$ \mathbf{X} = (X^i)_{i\in\N } $ 
is a weak solution of the ISDE \eqref{:80z} on $ [0,\infty )$.

\subsection{Construction of weak solutions: symmetric case} \label{s:81}

The second author constructed weak solutions of ISDEs under some mild assumptions through Dirichlet form theory \cite{o.isde}. 
In this subsection, we recall the results. We begin by constructing $ \mu $-reversible diffusions \cite{o.dfa,o.rm}.

We denote by $ \Lambda_r $ the Poisson random point field whose intensity is the Lebesgue measure on $ \Sr $. 
We also set $ \Lambda_r^m = \Lambda_r (\cdot \cap \SSrm ) $, 
where $ \SSrm = \{ \sss \in \sSS \, ;\, \sss (\Sr ) = m \} $. 
Let $ \map{\pir , \pirc }{\sSS }{\sSS }$ by $ \pir (\sss ) = \sss (\cdot \cap \Sr )$ and 
$ \pirc (\sss ) = \sss (\cdot \cap \Sr ^c)$ as before. 

Let $ \map{\Phi }{\sS }{\R \cup \{ \infty \} }$ and $ \map{\Psi }{\sS \ts \sS }{\R \cup \{ \infty \} }$ 
be measurable functions. We set 
\begin{align}& \notag 
\mathcal{H}_r (\xx ) = \sum_{x_i\in \Sr } \Phi (x_i) + \sum_{ i < j ,\, x_i, x_j \in \Sr } \Psi (x_i ,x_j )
\quad \text{ for } \xx = \sum_i \delta_{x_i} 
.\end{align}
We call $ \mathcal{H}_r $ a Hamiltonian on $ \Sr $ 
with free potential $ \Phi $ and interaction potential $ \Psi $. 

\begin{definition}\label{d:81}
 A random point field $ \mu $ is called 
 a $ ( \Phi , \Psi )$-quasi Gibbs measure with inverse temperature $ \beta > 0 $ 
if its regular conditional probabilities 
 $$ 
 \mu _{r,\xi }^{m} = 
 \mu (\, \pir ( \xx )\in \cdot \, | \, \pirc (\xx ) = 
 \pirc (\xi ),\, \xx (\Sr ) = m )
 $$
satisfy, for all $r,m\in \mathbb{N}$ and $ \mu $-a.s.\! $ \xi $, 
\begin{align}\notag &
\cref{;10Q}^{-1} e^{- \beta \mathcal{H}_r(\xx ) } \Lambda_r^{m} (d\xx ) \le 
 \mu _{r,\xi }^{m} (d\xx ) \le 
\cref{;10Q} e^{- \beta \mathcal{H}_r(\xx ) } \Lambda_r^{m} (d\xx ) 
.\end{align}
 Here, $ \Ct \label{;10Q} = \cref{;10Q} (r,m,\xi )$ 
 is a positive constant depending only on $ r ,\, m ,\, \pirc (\xi ) $. 
 For two measures $ \mu , \nu $ on a $ \sigma $-field $ \mathcal{F} $, we write 
$ \mu \le \nu $ if $ \mu (A) \le \nu (A) $ for all $ A \in \mathcal{F} $. 
\end{definition}
%

We make the following assumptions. 

\noindent 
\Ass{A1} $ \mu $ is a $ ( \Phi , \Psi )$-quasi Gibbs measure such that 
 there exist upper semi-continuous functions $ (\hat{\Phi }, \hat{\Psi })$ 
 and positive constants $ \Ct \label{;A21}$ and $ \Ct \label{;A22} $ satisfying 
\begin{align} & \notag 
\cref{;A21}^{-1} \hat{\Phi } (x)\le \Phi (x) \le \cref{;A21} \hat{\Phi }(x) ,\quad 
\cref{;A22}^{-1} \hat{\Psi } (x,y)\le \Psi (x,y) \le \cref{;A22} \hat{\Psi }(x,y) 
.\end{align}
\Ass{A2} \ For each $ r \in \N $, $ \mu $ satisfies $ \sum_{m=1}^{\infty} m \mu (\SSrm ) < \infty 
$. 

\smallskip 

Let $ (\Eamu ,\di ^{\amu } )$ be the bilinear form on $ \Lmu $ with domain $ \di ^{\amu }$ defined by 
\begin{align} 
\notag &
\Eamu (f,g) = \int_{\sSS } \DDDa [f,g] \, \mu (d\sss )
.\end{align}
Here $ \DDDa $ is as in \eqref{:52a} and $ \di ^{\amu } = \{ f \in \di \cap \Lmu \, ;\, \Eamu (f,f) < \infty \} $, 
where $ \di $ is the set of all bounded, local smooth functions on $ \sSS $. 

A family of probability measures $ \{ \PPs \}_{\sss \in \sSS } $ 
on $ (\WS ,\mathcal{B}(\WS )) $ is called a diffusion if the canonical process 
$ \XX =\{ \XX _t \} $ under $ \PPs $ is a continuous process 
with the strong Markov property starting at $ \sss $. 
Here, $ \XX _t(\ww ) = \wwt $ for $ \ww =\{ \wwt \} \in \WS $ by definition. 
$ \XX $ is adapted to $ \Ft $, where $ \mathcal{F}_t = \cap_\nu \mathcal{F}_t^{\nu} $ and the intersection is taken over all Borel probability measures $ \nu $; 
$ \mathcal{F}_t^{\nu}$ is a $ \sigma $-field generated by 
all $ \PP _{\nu }$-null sets of $ \overline{\cup_{s\ge 0 }\mathcal{F}_s }^{\PP _{\nu }}$ and 
$ \mathcal{F}_t^+ = \cap_{\epsilon > 0 } \mathcal{B}_{t+\epsilon } (\sSS ) $, 
where $ \Bt (\sSS ) = \sigma [\ww _s; 0 \le s \le t ]$. 
Furthermore, $ \{ \PPs \}_{\sss \in \sSS } $ is said to be $ \nu $-stationary if 
$ \nu $ is an invariant probability measure. We say $ \{ \PPs \}_{\sss \in \sSS } $ is $ \nu $-reversible if $ \{ \PPs \}_{\sss \in \sSS } $ is $ \nu $-symmetric and -stationary.

\begin{lemma}[\cite{o.dfa,o.rm,o-t.tail}] \label{l:81} 
Assume that \Ass{A1} and \Ass{A2} hold. Then, $ (\Eamu ,\di ^{\amu } )$ is 
closable on $ \Lmu $, and its closure $ (\Eamu ,\Damu )$ is a quasi-regular Dirichlet form on $ \Lmu $. 
 Moreover, the associated $ \mu $-reversible diffusion $ (\XX ,\{\PPs \}_{\sss \in\hH }) $ exists. 
\end{lemma}

We refer to \cite{mr} for the definition of quasi-regular Dirichlet forms and related notions. 
We also refer to \cite{fot.2} for details of Dirichlet form theory. 

Let $ \{\PPs \}_{\sss \in\hH }$ be as in \lref{l:81}. 
Note that $ \mu (\hH ) = 1 $ and set $ \Pm = \int_{\hH } \PPs \, \mu (d\sss ) $. 
Let $\lpath $ be as in \eqref{:22i}. Let $ \SSsde $ and $ \SSSsde $ be as in \eqref{:23y}. 
Then we assume the following: 

\smallskip 
\noindent 
\Ass{A3} \ $ \Pm (\WSsiNE ) = 1 $ and $ \Pm \circ \lpath ^{-1}(\WT (\SSSsde )) = 1 $. 


\begin{definition}[{\cite{o.isde}}]\label{d:82}
An $ \Rd $-valued function $ \dmu $ is called the logarithmic derivative of $\mu $ if 
$ \dmu \in L_{\mathrm{loc}}^1 (\sS \ts \sSS , \muone ) $ and, for all 
$\varphi \in C_{0}^{\infty}(\sS )\otimes \di $, 
\begin{align}&\notag 
 \int _{\sS \times \sSS } 
 \dmu (x,\sss )\varphi (x,\sss ) 
 \muone (dx d\sss ) = 
 - \int _{\sS \times \sSS } 
 \nabla_x \varphi (x,\sss ) \muone (dx d\sss ) 
.\end{align}
Here we write $ f \in L_{\mathrm{loc}}^p(\sS \ts \sSS , \muone ) $ if $ f \in L^p(\Sr \ts \sSS , \muone )$ for all $ r \in\N $, and we set 
$ \nablax a (x,\sss ) = (\PD{\akl }{x_{\lL }}(x,\sss ))\lD $, where $ x=(x_1,\ldots,x_d)$. 
\end{definition}
%

\smallskip 
\noindent \Ass{A4} 
$ \mu $ has a logarithmic derivative $ \dmu $, and the coefficients $ (\sigma , b )$ satisfy 
\begin{align}\notag &
\sigma ^{t}\sigma = a ,\quad b = \half \nablax a + \half \dmu 
.\end{align}

\begin{lemma}[{\cite[Theorem 26]{o.isde}}] \label{l:82} 
Assume that \Ass{A1}--\Ass{A4} hold. 
Then, there exists some $ \{ \mathcal{F}_t \} $-Brownian motion $ \mathbf{B}$ such that 
$ ( \lpath (\XX ),\mathbf{B}) $ is a weak solution of \eqref{:23a}. 
\end{lemma}

In \lref{l:82}, $ \OFPFs $ is the filtered space introduced before \lref{l:81}. 
We can take $ \hH $ in \eqref{:23c} uniquely up to capacity zero and 
$ \SSSsde $ in \eqref{:23b} as $ \ulab ^{-1} (\hH )$. 
ISDE \eqref{:23a}--\eqref{:23c} has a weak solution 
$ ( \lpath (\XX ),\mathbf{B}) $ defined on $ \OFPFs $ for each $ \sss \in \hH $ and 
$ \PmNI{\hH } $ and, in particular, $ \mu (\hH ) = 1 $.

\section{Appendix II: Lyons--Zheng decomposition for weak solutions of ISDEs}\label{s:9}

Let $ \XB $ be a weak solution of \eqref{:23a}--\eqref{:23b}. 
We shall derive the Lyons--Zheng type decomposition of additive functionals of $ \mathbf{X} $.

Let $ m \in \{ 0 \}\cup \N $ and $ F \in C^2 (\sS ^m\ts \Si )$, where 
$ \Si $ is given by \eqref{:22g} and 
$ C^2 (\sS ^m\ts \Si )$ is the set of functions on $ \sS ^m\ts \Si $ of $ C^2$-class. 
Here, we say that $ F $ is of $ C^2$-class if $ \check{F} \in C^2(\SN )$ 
in the sense that 
$ \check{F}(s_1,\ldots,s_n,s_{n+1},\ldots )$ is $ C^2 $ in $ (s_1,\ldots,s_n) $ 
for fixed $ (s_{n+1},\ldots )$ for all $ n \in \N $. 
The function $ \check{F}$ is such that, for $ \mathbf{x}=(x_1,\ldots,x_m)$ and 
$ \sss =\sum_{i=m+1}^{\infty} \delta_{s_i}$, 
\begin{align}&\notag 
F (\mathbf{x},\sss ) = \check{F} (x_1,\ldots,x_m,s_{m+1},s_{m+2},s_{m+3}, \ldots )
\end{align}
and for any permutation $ \mathfrak{p}$ on $ \N \backslash \{ 1,\ldots,m \} $, 
\begin{align}&\notag 
\check{F} (x_1,\ldots,x_m ,s_{m+1},s_{m+2},s_{m+3},\ldots )
=\check{F} (x_1,\ldots,x_m ,s_{\mathfrak{p} (m+1)},s_{\mathfrak{p} (m+2)}, s_{\mathfrak{p}(m+3)}, \ldots )
.\end{align}

Let $ \wwwm $ be as in \eqref{:22q}. Let 
$ \map{r_T}{C([0,T]; \sS ^m\ts \Si )}{C([0,T]; \sS ^m\ts \Si )}$ be such that 
\begin{align}\label{:92q}&
 r_T(\wwwm ) (t) = \wwwm ({T-t})
.\end{align}
We regard $ \QQlaM $ as a measure on $ C([0,T]; \sS ^m\ts \Si )$, 
where $ \QQlaM $ is given by \eqref{:34c}. 
Indeed, by \eqref{:23b} each tagged particle $ X^i $ of $ \mathbf{X}$ 
does not explode under $ \QQla $. 
Hence, each tagged path of $ \wwwm $ does not explode for $ \QQlaM $-a.e.\,$ \wwwm $. 

\begin{lemma} \label{l:91}
$ \QQlaM = \QQlaM \circ r_T^{-1} $. 
\end{lemma}
\begin{proof}  
By \eqref{:34c}, $ \QQlaM = \int _{\sS ^m \ts \sSS } \QQxsM d\la ^{[m]} $. 
We deduce from \Ass{S$_{\la }$} that $ \{ \QQxsM \} $ is 
a $ \la ^{[m]}$-symmetric Markov process and $ \la ^{[m]} $ is an invariant measure of $ \{ \QQxsM \} $. 
From these we conclude \lref{l:91}. 
\end{proof} 

Let $ \XM $ be the $ m $-labeled process of $ \mathbf{X}$, that is, 
 $ \XM =(X^1,\ldots,X^m,\sum_{i=m+1}^{\infty} \delta_{X^i})$. 
Recall that 
$ \sigma [\mathbf{B}_s^m ; s\le t] \subset \sigma [\mathbf{X} _s^{[m]} ; s\le t ]$ 
 for all $ t $ under $ \QQla $ by \Ass{BX}. 
Hence $ \mathbf{B}^m $ is a function of $ \mathbf{X}^{[m]}$. 
 Then, there exists a function $\hat{\mathbf{B}}^m $ defined on the path space $ \WT ( \Sm \ts \sSS )$ such that 
$ \mathbf{B}^m = \hat{\mathbf{B}}^m (\XM )$ under $ \QQla $. Clearly, 
$\hat{\mathbf{B}}^m (\wwwm ) $ under $ \QQxsM $ for $ \la ^{[m]}$-a.e.\,$ (\mathbf{x},\sss )$
 is a $ dm $-dimensional Brownian motion. 
Here we recall $ \QQlaM $ is not necessary a probability measure for $ m \in \N $. 
Below, \lq\lq under $ \QQlaM $'' means 
\lq\lq under $ \QQxsM $ for $ \la ^{[m]}$-a.e.\,$ (\mathbf{x},\sss )$''. 

Let $ \www ^m = (w^1,\ldots,w^m)$, where $ \wwwm =(\www ^m , \ww ^{m*}) $. 
Then $ \www ^m $ under $ \QQlaM $ is a weak solution of \eqref{:24f} with Brownian motion
 $\hat{\mathbf{B}}^m =(\hat{B}^{m,i})_{i=1}^m $ and $ \XX = \ww $, where 
$ \ww = \sum_{i=1}^{\infty} \delta_{w^i}$ as before. 
The coefficients of \eqref{:24f} depends only on $ \XX ^{m*} = \sum_{i=m+1}^{\infty} \delta_{X^i } $, 
so does $ \ww ^{m*} $ in the present case. 
By \eqref{:24z}, we can rewrite \eqref{:24f} as 
\begin{align}\label{:92s}&
d w _t^{i} = 
\sigma (w_t^{i}, \ww _t^{\idia } ) d \hat{B} _t^{m ,i} + 
 \bbb (w_t^{i}, \ww _t^{\idia } ) dt 
\quad \text{ for $ i=1,\ldots,m $}
.\end{align}
Here, for $ \www = (w^i)_{i=1}^{\infty}$, we set 
$ \ww ^{\idia } = \{ \ww _t^{\idia } \}_t $ by 
$ \ww _t^{\idia } = \sum_{j\not=i,\, j=1}^{\infty} \delta_{w_t^j} $. 

Let $ \Sm _{\neq} = \{ \mathbf{x}=(x_i)_{i=1}^m \, ;\, x_i\neq x_j \text{ for all } i\neq j \} $ 
and $ F \in C^2(\Sm _{\neq} \ts \sSS ) $. 
Below, we write $ \wwwm (t) = \wwwm _t $. 
Applying It$ \hat{\mathrm{o}}$'s formula to $ F $ and \eqref{:92s} informally, we see 
under $ \QQlaM $ 
\begin{align}\label{:92c}&
F (\wwwm _t) - F (\wwwm _0 ) = 
\sum_{i=1}^{\infty} 
 \int_0^t 
\big( 
\partial_i \check{F} (\www _u) , 
\sigma (w_u^i, \ww _u^{\idia }) 
d\hat{B}_u^{m,i} 
\big)_{\Rd } + 
\\ \notag &
 \int_0^t \sum_{i=1}^{\infty}
\big( 
 b (w_u^i, \ww _u^{\idia }) 
, 
 \partial_i \check{F} (\www _u) 
\big)_{\Rd }
du 
+
 \int_0^t \sum_{i=1}^{\infty} \sumklD 
 \half \akl (w_u^i, \ww _u^{\idia }) 
\partial_{i,\kK } \partial_{i,\lL } \check{F} (\www _u) 
du 
.\end{align}
The equality \eqref{:92c} can be justified if $ F $ is a local smooth function, and each term is integrable. 
We shall assume $ F (\wwwm _t)$ is a continuous semi-martingale satisfying \eqref{:92c}. 
\begin{lemma}	\label{l:92}
Consider the same assumptions as for \tref{l:34}. Let $ m \in \{ 0 \}\cup \N $ and 
$ F \in C^2(\Sm _{\neq} \ts \sSS ) $. 
Assume that for $ \QQlaM $-a.e.\,$ \wwwm $ 
\begin{align}\label{:92w}&
 \int_0^t \sum_{i=1}^{\infty} \sumklD 
 \akl (w_u^i,\ww _u^{\idia }) 
\partial_{i,\kK } \check{F} (\mathbf{\www }_u) 
\partial_{i,\lL } \check{F} (\mathbf{\www }_u) 
du < \infty 
\quad \text{ for all }t
\end{align}
and that $ F (\wwwm _t)$ is a continuous semi-martingale under $ \QQlaM $ satisfying \eqref{:92c}. 
Then, under $ \QQlaM $, we obtain for $ 0 \le t \le T $ 
\begin{align}\label{:92x}&
F (\wwwm _{t}) - F (\wwwm _0 ) = \half 
\Big\{ M_t (\wwwm ) + \big( M_{T-t} (r_T(\wwwm )) -M_T (r_T(\wwwm )) \big) \Big\} 
.\end{align}
Here, $ M $ is a continuous local martingale under $ \QQlaM $ such that 
\begin{align}\label{:92y}&
 M_t = \sum_{i=1}^{\infty} \int_0^t 
\big( 
\partial_i \check{F} (\www _u) , 
\sigma (w_u^i, \ww _u^{\idia }) d\hat{B}_u^{m,i} 
\big)_{\Rd }
.\end{align}
The quadratic variation of $ M $ is given by 
\begin{align}\label{:92z}&
\langle M \rangle_t (\wwwm ) = 
 \int_0^t \sum_{i=1}^{\infty} \sumklD 
 \akl (w_u^i,\ww _u^{\idia }) 
\partial_{i,\kK } \check{F} (\mathbf{\www }_u) 
\partial_{i,\lL } \check{F} (\mathbf{\www }_u) 
du 
.\end{align}
Furthermore, $ \{M_{T-t} (r_T(\wwwm )) -M_T (r_T(\wwwm )) \}$ 
is a continuous local martingale under $ \QQlaM $ with respect to the inverse filtering. 
\end{lemma}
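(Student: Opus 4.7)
The plan is to obtain \eqref{:92x} by combining the forward semimartingale expansion coming from \eqref{:92c} with the analogous expansion for the time-reversed path $r_T(\wwwm)$, exploiting \lref{l:91} so that the drift contributions cancel.

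First, I rewrite \eqref{:92c} in the compact form $F(\wwwm_t) - F(\wwwm_0) = M_t(\wwwm) + A_t(\wwwm)$, where $M_t(\wwwm)$ is the stochastic integral in \eqref{:92y} and $A_t(\wwwm)$ denotes the Riemann-integral part (the $b$-drift and the second-order $a$-term). By \Ass{BX}, the Brownian motion $\hat{\mathbf{B}}^m$ is a measurable functional of $\wwwm$, so $M_{\cdot}(\cdot)$ and $A_{\cdot}(\cdot)$ are well-defined path functionals. Standard stochastic-integral theory, combined with \eqref{:92w} and the uniform ellipticity and boundedness of $a$ coming from \Ass{UB}, gives $\QQlaM$-a.e.\ that $M(\wwwm)$ is a continuous local martingale with quadratic variation exactly \eqref{:92z}.

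Second, I invoke \lref{l:91}: $\QQlaM = \QQlaM \circ r_T^{-1}$, hence $r_T(\wwwm)$ has the same distribution as $\wwwm$. Because the semimartingale expansion is a pathwise consequence of the SDE and the functionals $M$ and $A$ are defined pathwise, the same identity transfers:
$$ F(r_T(\wwwm)_s) - F(r_T(\wwwm)_0) = M_s(r_T(\wwwm)) + A_s(r_T(\wwwm)), \quad s \in [0,T] . $$
Using $r_T(\wwwm)_{T-t} = \wwwm_t$ and $r_T(\wwwm)_0 = \wwwm_T$, the choice $s = T-t$ yields $F(\wwwm_t) - F(\wwwm_T) = M_{T-t}(r_T(\wwwm)) + A_{T-t}(r_T(\wwwm))$. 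Since $A$ is an ordinary time integral, the substitution $u \mapsto T-u$ gives $A_{T-t}(r_T(\wwwm)) = A_T(\wwwm) - A_t(\wwwm)$. Evaluating the forward and reversed expansions at their endpoints and adding the resulting identities produces $M_T(\wwwm) + M_T(r_T(\wwwm)) = -2A_T(\wwwm)$, which eliminates the drift.

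Finally, adding the forward expansion at time $t$ to the reversed expansion at time $t$ and substituting the two relations $A_{T-t}(r_T(\wwwm)) = A_T(\wwwm) - A_t(\wwwm)$ and $A_T(\wwwm) = -\tfrac{1}{2}\{M_T(\wwwm) + M_T(r_T(\wwwm))\}$ collapses all drift terms and reproduces \eqref{:92x} after dividing by two. For the last assertion, $s \mapsto M_s(r_T(\wwwm))$ is a continuous local martingale under $\QQlaM$ with respect to the natural forward filtration of $r_T(\wwwm)$; but this filtration is precisely the inverse filtration of $\wwwm$, so the process $t \mapsto M_{T-t}(r_T(\wwwm)) - M_T(r_T(\wwwm))$ is a reverse continuous local martingale of $\wwwm$ in $t$.

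The main obstacle is conceptual rather than computational: \eqref{:92c} is derived through It\^o's formula applied to the SDE driven by $\hat{\mathbf{B}}^m$, yet $r_T(\wwwm)$ does not satisfy that same SDE with the same Brownian motion and in general has a different drift when viewed as a forward process. The resolution is to treat $M_\bullet(\cdot)$ purely as a measurable functional of the path, which is possible only because of \Ass{BX}, and then use the distributional invariance from \lref{l:91} to propagate the pathwise identity and the local-martingale property to $r_T(\wwwm)$. A secondary technical point is to check that \eqref{:92w} propagates under $r_T$, but this is automatic by the same distributional identity.
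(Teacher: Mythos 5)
Your proposal is correct and follows essentially the same route as the paper: write the forward decomposition $F(\wwwm_t)-F(\wwwm_0)=M_t+A_t$ with $M$ a pathwise functional (legitimate because of \Ass{BX}), transfer it to $r_T(\wwwm)$ via the invariance $\QQlaM=\QQlaM\circ r_T^{-1}$ from \lref{l:91}, and cancel the drift by elementary algebra on the two expansions. The only difference is cosmetic — you add the forward and reversed identities and use the endpoint relation $M_T(\wwwm)+M_T(r_T(\wwwm))=-2A_T(\wwwm)$, whereas the paper solves for $M_t(r_T(\wwwm))$ and substitutes $t\mapsto T-t$ and $t\mapsto T$ — and both yield \eqref{:92x} identically.
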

\begin{proof}  
We modify the argument of \cite[Theorem 5.7.1]{fot.2} according to the current situation. 
Note that the weak solution $ \XB $ is not associated with any quasi-regular Dirichlet forms; 
there exists no $ L^2 $-semi-group associated with the labeled process $\mathbf{X} $. 
We can still use the $ L^2$-semi-group associated with the $ m $-labeled process $ \XM $ 
 (equivalently, $ \wwwm $ under $ \QQlaM $) given by \eqref{:34d} for any $ m \in \N $.

By  \Ass{BX} we see $ \hat{B} ^{m ,i} $ in \eqref{:92s} is a function in $ \www ^{[m]}$ and 
 we can write \eqref{:92s} as 
\begin{align}\label{:92D}&
d w _t^{i} = 
\sigma (w_t^{i}, \ww _t^{\idia } ) d \hat{B} _t^{m ,i} (\www ^{[m]}) + 
 \bbb (w_t^{i}, \ww _t^{\idia } ) dt 
\quad \text{ for $ i=1,\ldots,m $}
.\end{align}
For $ \mathbf{x} = (x_i)_{i=1}^{\infty}$, we set 
$ \mathbf{x}^{[m]} = (x_1,\ldots,x_m, \sum_{i=m+1}^{\infty} \delta_{x_i}) $ and 
$ \mathfrak{x}^{\idia } = \sum_{j\not=i,\, j=1}^{\infty} \delta_{x_j} $. 
Let 
\begin{align}\label{:92d}
G (\mathbf{x}^{[m]}) = &
\sum_{i=1}^{\infty} \big(  b (x_i,\xx ^{\idia }) 
,
\partial_i \check{F} (\mathbf{x})
\big)_{\Rd }
+
\sum_{i=1}^{\infty} \sumklD 
 \half \akl (x_i,\xx ^{\idia } ) 
\partial_{i,\kK }\partial_{i,\lL } \check{F} (\mathbf{x}) 
.\end{align}
Then, from \eqref{:92c}, \eqref{:92w}, \eqref{:92y}, \eqref{:92D}, and \eqref{:92d}, we have that under $ \QQlaM $ for $ 0 \le t \le T $ 
\begin{align}\label{:92f}&
F (\wwwm _t) - F (\wwwm _0 ) = M_t (\wwwm ) + \int_0^t G (\wwwm _u ) du 
.\end{align}

By \lref{l:91}, $ \QQlaM = \QQlaM \circ r_T ^{-1}$. 
Hence, $ M_t (r_T(\wwwm ))$ is well-defined for $ \QQlaM $-a.e.\,$ \wwwm $. 
We see then from \eqref{:92f} the following. 
\begin{align} \label{:92g}&
F ( r_T( \wwwm )_t ) - F ( r_T( \wwwm )_0 ) = M_t (r_T(\wwwm )) + \int_0^t G ( r_T( \wwwm )_u ) du 
 .\end{align}
By the definition of $ r_T $, we can rewrite \eqref{:92g} as 
\begin{align} \label{:92gg}
F (\wwwm _{T-t}) - F (\wwwm _T ) 
=&
 M_t (r_T(\wwwm )) + \int_0^T G ( \wwwm _u ) du - \int_0^{T-t} G ( \wwwm _u ) du 
.\end{align}
Hence, from \eqref{:92gg}, we obviously have 
\begin{align}\label{:92h}&
 M_t (r_T(\wwwm )) = F (\wwwm _{T-t}) - F (\wwwm _T ) 
- \int_0^T G ( \wwwm _u ) du + \int_0^{T-t} G ( \wwwm _u ) du 
.\end{align}
Take $ t $ to be $ T-t$ and $ T $ in \eqref{:92h}. Then we have 
\begin{align}\label{:92i}
M_{T-t}(r_T(\wwwm )) &= F (\wwwm _{t}) - F (\wwwm _T ) 
- \int_0^T G ( \wwwm _u ) du + \int_0^{t} G ( \wwwm _u ) du 
,\\\label{:92j}
M_T (r_T(\wwwm )) &= F (\wwwm _{0}) - F (\wwwm _T ) 
- \int_0^T G ( \wwwm _u ) du 
.\end{align}
Subtract both sides of \eqref{:92j} from those of \eqref{:92i}. 
Then using \eqref{:92f} we obtain 
\begin{align}\notag 
M_{T-t} (r_T(\wwwm )) -M_T (r_T(\wwwm )) = &
F (\wwwm _{t}) - F (\wwwm _0 ) + \int_0^t G (\wwwm _u ) du 
\\ \notag =&
2 \{F (\wwwm _{t}) - F (\wwwm _0 )\} - M_t (\wwwm ) 
.\end{align}
Hence, we have under $ \QQlaM $ for $ 0 \le t \le T $ 
\begin{align}\notag 
&
F (\wwwm _{t}) - F (\wwwm _0 ) = \half 
\Big\{ M_t (\wwwm ) + \big( M_{T-t} (r_T(\wwwm )) -M_T (r_T(\wwwm )) \big) \Big\} 
.\end{align}
This completes the proof of \eqref{:92x}. 
Equation \eqref{:92z} follows immediately from \eqref{:92y}. 
The last claim follows from \lref{l:91} and the definition of $ r_T $. 
\end{proof}

\section{Acknowledgments. } \label{s:X}

The authors thank the anonymous referee for his/her careful reading, constructive comments, and in particular, a suggestion on Osgood's condition. 
%
%
%
H.O. is supported in part by JSPS KAKENHI 
Grant Numbers JP20K20885, JP18H03672, JP16H06338. 
H.T. is supported in part by JSPS KAKENHI Grant Number JP19H01793.



\end{document}